\newtheorem{theorem}{Theorem}[section]
\newtheorem{lemma}[theorem]{Lemma}
\newtheorem{proposition}[theorem]{Proposition}
\newtheorem{corollary}[theorem]{Corollary}
\theoremstyle{definition}
	\newtheorem{definition}[theorem]{Definition}
	\newtheorem{example}[theorem]{Example}
	\newtheorem{remark}[theorem]{Remark}
\numberwithin{equation}{section}
\newcommand{\N}{\mathbb{N}} 
\newcommand{\R}{\mathbb{R}} 
\newcommand{\C}{\mathbb{C}} 
\newcommand{\Fmult}{F_{\alpha,\lambda}^{\varphi,\psi}}
\newcommand{\Fmulti}{F_{\alpha,\lambda}^{\varphi_i,\psi_i}}
\newcommand{\Fmultj}{F_{\alpha,\lambda}^{\varphi_{i_j},\psi_{i_j}}}
\newcommand{\Fmultoj}{F_{\alpha,{\lambda_0}}^{\varphi_{i_j},\psi_{i_j}}}
\newcommand{\Fmultiterate}{F_{\alpha,\lambda}^{\varphi_n,\psi^{n,\varphi}}}
\begin{document}
	
\title[Weighted composition operators on $\mathscr{S}(\R^d)$]{Power boundedness and related properties for weighted composition operators on $\mathscr{S}(\R^d)$}

\author[V.\ Asensio]{Vicente Asensio$^1$}
\address{$^1$Instituto Universitario de Matem\'atica Pura y Aplicada IUMPA, Universitat Polit\`ecnica de Val\`encia, Camino de Vera, s/n, E-46022 Valencia, Spain}
\email{viaslo@upv.es}

\author[E.\ Jord\'a]{Enrique Jord\'a$^2$}
\address{$^2$Departamento de Matem\'atica Aplicada, E. Polit\'{e}cnica Superior
	de Alcoy, Universidad Polit\'ecnica de Valencia, Plaza Ferr\'andiz
	y Carbonell 2, E-03801 Alcoy (Alicante), Spain}
\email{ejorda@mat.upv.es}

\author[T.\ Kalmes]{Thomas Kalmes$^3$}
\address{$^3$Faculty of Mathematics, Chemnitz University of Technology, 09107 Chemnitz, Germany}
\email{thomas.kalmes@math.tu-chemnitz.de}

\date{}

\maketitle

\begin{abstract}
	We characterize those pairs $(\psi,\varphi)$ of smooth mappings $\psi:\mathbb{R}^d\rightarrow\mathbb{C}, \varphi:\mathbb{R}^d\rightarrow\mathbb{R}^d$ for which the corresponding weighted composition operator $C_{\psi,\varphi}f=\psi\cdot(f\circ\varphi)$ acts continuously on $\mathscr{S}(\mathbb{R}^d)$. Additionally, we give several easy-to-check necessary and sufficient conditions of this property for interesting special cases. Moreover, we characterize power boundedness and topologizablity of $C_{\psi,\varphi}$ on $\mathscr{S}(\mathbb{R}^d)$ in terms of $\psi,\varphi$. Among other things, as an application of our results we show that for a univariate polynomial $\varphi$ with $\text{deg}(\varphi)\geq 2$, power boundedness of $C_{\psi,\varphi}$ on $\mathscr{S}(\mathbb{R})$ for every $\psi\in\mathscr{O}_M(\mathbb{R})$ only depends on $\varphi$ and that in this case power boundedness of $C_{\psi,\varphi}$ is equivalent to $(C_{\psi,\varphi}^n)_{n\in\mathbb{N}}$ converging to $0$ in $\mathcal{L}_b(\mathscr{S}(\mathbb{R}))$ as well as to the uniform mean ergodicity of $C_{\psi,\varphi}$. Additionally, we give an example of a power bounded and uniformly mean ergodic weighted composition operator $C_{\psi,\varphi}$ on $\mathscr{S}(\mathbb{R})$ for which neither the multiplication operator $f\mapsto \psi f$ nor the composition operator $f\mapsto f\circ\varphi$ acts on $\mathscr{S}(\mathbb{R})$. Our results complement and considerably extend various results of Fern\'andez, Galbis, and the second named author. 
	
	\mbox{}\\
	
	\noindent Keywords: weighted composition operator, power bounded operator, mean ergodic operator, topologizable operator, space of rapidly decreasing smooth functions.\\
	
	\noindent MSC 2020: 47B33, 47A35.
\end{abstract}
	
\section{Introduction}	

Weighted composition operators play an important role in functional analysis and operator theory. Given a set $\Omega$, a self mapping $\varphi$ on $\Omega$ and a function $\psi$ on $\Omega$, beyond the fundamental question of when a weighted composition operator $C_{\psi,\varphi}$ acts on a given function space $\mathcal{F}(\Omega)$ -- that is, when the operation $C_{\psi,\varphi}f=\psi\cdot(f\circ\varphi)$ results in a function belonging to $\mathcal{F}(\Omega)$ for every $f\in\mathcal{F}(\Omega)$ -- it is a natural task to characterize operator theoretic properties of $C_{\psi,\varphi}$ on $\mathcal{F}(\Omega)$ by properties of $\psi$ and $\varphi$. Obviously, the class of weighted composition operators contains multiplication operators, i.e.\ $\varphi(x)=x$, as well as composition operators, i.e.\ $\psi(x)=1$. 

In the present article, we consider weighted composition operators on the space $\mathscr{S}(\R^d)$ of rapidly decreasing smooth functions. While the space $\mathscr{O}_M(\R^d)$ of multipliers for $\mathscr{S}(\R^d)$ has been characterized by L.~Schwartz \cite{Horvath,Schwartz}, the functions $\varphi$ on $\R$ for which the corresponding composition operator acts on $\mathscr{S}(\R)$ have been characterized only recently in \cite{GaJo18}. Apart from characterizing the pairs $(\psi,\varphi)$ for which $C_{\psi,\varphi}$ acts on $\mathscr{S}(\R^d)$, we also characterize power boundedness and ($m$-)topologizability for weighted composition operators on $\mathscr{S}(\R^d)$ in terms of $\psi$ and $\varphi$.

In recent years, mean ergodicity, power boundedness, and topologizablity of (weighted) composition operators and multiplication operators on various spaces of (generalized) functions have attracted the attention of a large number of authors. We give only a sample of articles (and refer to references therein); see e.g.\ \cite{AlJoMe22, BeGoJoJo16b, BeGoJoJo16, BeJo21, BoDo11, BoDo11b, BoJoRo18, BoRi09, GoJoJo16, JoRo20, JoSaSe20, JoSaSe21, Kalmes19, Kalmes20, KaSa22, KaSa23, Santacreu24}.

Recall that a continuous linear operator $T$ on a locally convex Hausdorff  space $E$ is power bounded precisely when the set of its iterates $\{T^n;\,n\in\N\}$ is equicontinuous. This notion is closely connected with $T$ being mean ergodic, i.e.\ with the property that for every $x\in E$ the sequence of Ces\`aro means $\left(\frac{1}{n}\sum_{m=1}^n T^m(x)\right)_{n\in\N}$ converges. Whenever the Ces\`aro means converge uniformly on bounded sets, $T$ is uniformly mean ergodic. By a classical result of Lorch \cite{Lorch39}, on reflexive Banach spaces, every power bounded operator is mean ergodic, which characterizes reflexivity of Banach spaces, as has been shown in the celebrated work \cite{FoLiWo01} by Fonf, Lin, and Wojtaszczyk. Bonet, de Pagter, and Ricker \cite[Proposition 3.3]{BoPaRi11} proved that Lorch's result remains true for (semi-)reflexive Hausdorff locally convex spaces. Additionally, by \cite[Theorem 2.5]{KaSa22}, on Montel spaces, mean ergodic operators are automatically uniformly mean ergodic. Thus, power bounded operators of $\mathscr{S}(\R^d)$ are already uniformly mean ergodic.

While the interest for power boundedness for operators on locally convex Hausdorff spaces stems from its close relationship to (uniform) mean ergodicity, topologizable operators were introduced by \.{Z}elazko in \cite{Zelazko07} (see also \cite{Bonet10}). Recall that a continuous linear operator $T$ on a locally convex Hausdorff space $E$ is topologizable if for every continuous seminorm $p$ on $E$ there is a continuous seminorm $q$ on $E$ and a sequence $(a_n)_{n\in\N}$ of positive numbers such that $(a_n T^n)_{n\in\N}$ is equicontinuous from $E$ equipped with $p$ into $E$ equipped with $q$. This property characterizes those $T$ for which there is a unital subalgebra $A$ of $\mathcal{L}(E)$ (with composition as multiplication) which contains $T$ and which admits a locally convex topology making $A$ into a topological algebra such that the map $A\times E\rightarrow E, (S,x)\mapsto Sx$ is continuous. While the notion of $m$-topologizability (where the sequence $(a_n)_{n\in\N}$ in the definition of topologizablity can be chosen as a sequence of powers $(M^n)_{n\in\N}, M>0$) was also introduced by \.{Z}elazko \cite{Zelazko02}, a renewed interest in this property stems from a recent result of Goli\'nska and Wegner \cite{GoWe16} stating that $m$-topologizable operators on sequentially complete locally convex spaces generate uniformly continuous semigroups of operators. It should be noted that in contrast to Banach spaces, on arbitrary locally convex spaces in general not every continuous linear operator generates a strongly continuous semigroup, see \cite{FrJoKaWe14}.

Our results for power boundedness for weighted composition operators $C_{\psi,\varphi}$ are sharp when $\psi,\varphi$ are univariate polynomials. They allow to provide natural examples of infinite dimensional subspaces of $\mathcal{L}(\mathscr{S}(\R))$ consisting entirely of power bounded operators, or -- except the zero operator -- entirely of non-power bounded operators, respectively. Concrete examples of such infinite dimensional vector spaces are $\{C_{\psi,\varphi}: \varphi(x)=x^2+1, \psi \text { polynomial}\}$ (Theorem \ref{main}) and $\{C_{\psi,\varphi}: \varphi(x)=(1/2)x, \psi \text { polynomial}\}$ (Proposition \ref{prop:powerbounded for ax+b}), respectively. We point out that examples of the second kind cannot occur for operators defined on Banach spaces.

The article is organized as follows. In section \ref{sec:weighted compositions on S} we characterize those pairs $(\psi,\varphi)$ for which the corresponding weighted composition operator $C_{\psi,\varphi}$ acts on $\mathscr{S}(\R^d)$. Under mild additional assumptions on $\psi$ and $\varphi$, in section \ref{sec:small decay multiplies}, we give a characterization for the latter property which is easier to check in many situations including composition operators. In section \ref{sec: power boundedness} we characterize power boundedness and ($m$-)topologizability of $C_{\psi,\varphi}$ on $\mathscr{S}(\R^d)$. As a concrete example we show power boundedness of $C_{\exp,\exp}$ on $\mathscr{S}(\R)$, i.e.\ $\psi=\varphi=\exp$. It should be noted that neither the composition by $\exp$ nor the multiplication by $\exp$ acts on $\mathscr{S}(\R)$. As an application of our findings in section \ref{sec: power boundedness}, in section \ref{sec:polynomials} we study power boundedness of weighted composition operators on $\mathscr{S}(\R)$ for the case that $\varphi$ is a (univariate) polynomial. Among others, we prove that power boundedness of translation operators, i.e. $\varphi(x) = x+b$, $b \neq 0$, can be achieved by multiplication with constants of modulus strictly smaller than 1. We also show that for $\text{deg}(\varphi)\geq 2$, power boundedness of the composition operator $C_\varphi$ is equivalent to the power boundedness and/or uniform mean ergodicity of the weighted composition operators $C_{\psi,\varphi}$ for arbitrary $\psi\in\mathscr{O}_M(\R)$. In the short final section \ref{sec:supercyclicity}, we apply arguments from section \ref{sec:polynomials} to show that a univariate polynomial $\varphi$ is necessarily a translation whenever there is $\psi\in \mathscr{O}_M(\R)$ such that $C_{\psi,\varphi}$ is weakly supercyclic, i.e.\ there is $f\in \mathscr{S}(\R)$ with $\{\lambda C_{\psi,\varphi}^nf;\,\lambda\in\C, n\in\N\}$ is weakly dense in $\mathscr{S}(\R)$. This complements recent results on hypercyclicity of weighted translation operators on $\mathscr{S}(\R)$ by Goli\'nski and Przestacki \cite{GoPr20}.

Throughout, we use standard notation from functional analysis \cite{MeVo97} and dynamics of linear operators on locally convex spaces \cite{BM, BoPaRi11}.

\section{Weighted composition operators on $\mathscr{S}(\R^d)$}\label{sec:weighted compositions on S}

The main purpose of this section is to characterize those pairs $(\psi,\varphi)$, $\psi\in C^\infty(\R^d)$ and $\varphi:\R^d\rightarrow\R^d$ smooth mapping, for which the corresponding weighted composition operator $C_{\psi,\varphi}$ acts on $\mathscr{S}(\R^d)$, where we say that $C_{\psi,\varphi}(f)=\psi\cdot(f\circ\varphi)$ \textit{acts on} $\mathscr{S}(\R^d)$ if $C_{\psi,\varphi}f\in\mathscr{S}(\R^d)$ for every $f\in \mathscr{S}(\R^d)$. Obviously, in this case $C_{\psi,\varphi}$ is a linear mapping on $\mathscr{S}(\R^d)$. As usual, we say that $\varphi$ is a \textit{symbol for} $\mathscr{S}(\R^d)$ if $C_\varphi:=C_{1,\varphi}$ acts on $\mathscr{S}(\R^d)$. Additionally, we recall that $\psi$ is a \textit{multiplier for} $\mathscr{S}(\R^d)$ if $M_\psi:=C_{\psi,\operatorname{id}_{\R^d}}$ acts on $\mathscr{S}(\R^d)$. We shall see in Example \ref{ex: exp} below that neither $C_{\varphi}$ acting on $\mathscr{S}(\mathbb{R})$ nor $\psi$ being a multiplier for $\mathscr{S}(\mathbb{R})$ is necessary for $C_{\psi,\varphi}$ to act on $\mathscr{S}(\mathbb{R})$.

We first fix some notation which is valid throughout this paper. As usual, for a smooth function $f:\R^d\rightarrow\C$ and a multi index $\alpha=(\alpha_1,\ldots,\alpha_d)\in \N_0^d$ we write $f^{(\alpha)}(x)=\partial^\alpha f(x)=\frac{\partial^{|\alpha|}}{\partial x_1^{\alpha_1}\cdots\partial x_d^{\alpha_d}}f(x)$, where as usual $|\alpha|=\sum_{j=1}^d\alpha_j$. While we also denote the Euclidean norm of $x\in\R^d$ by $|x|$, it will always be clear from the context whether we refer to the length of a multi index or the Euclidean norm of a vector. Next, we recall some notation from \cite{CoSa1996} used in the multivariate version of the Fa\`a di Bruno formula. On the set  $\N_0^d$ of multi indices, for $\alpha=(\alpha_1,\ldots,\alpha_d), \beta=(\beta_1,\ldots,\beta_d)$, we write $\alpha\prec\beta$ provided one of the following holds:
\begin{itemize}
	\item[(i)] $|\alpha|<|\beta|$,
	\item[(ii)] $|\alpha|=|\beta|$ and $\alpha_1<\beta_1$, or
	\item[(iii)] $|\alpha|=|\beta|$, $\alpha_1=\beta_1,\ldots,\alpha_k=\beta_k$ and $\alpha_{k+1}<\beta_{k+1}$ for some $1\leq k<d$.
\end{itemize}
Moreover, for $\beta\in\N_0^d\backslash\{0\}$ and $\lambda\in\N_0^d$ we define the set
\begin{eqnarray*}
	p(\beta,\lambda)&=&\left\{(k_1,\ldots,k_{|\beta|};\ell_1,\ldots,\ell_{|\beta|})\in\N_0^{2|\beta|d}:\,\text{for some }1\leq s\leq |\beta|,\textcolor{white}{\sum_{j=1}^{|\beta|}}\right.\\
	&&\quad k_j=\ell_j=0\text{ for }1\leq j\leq |\beta|-s; |k_j|>0\text{ for }|\beta|-s+1\leq j\leq |\beta|,\text{ and}\\
	&&\quad \left. 0\prec\ell_{|\beta|-s+1}\prec\cdots\prec\ell_{|\beta|}\text{ are such that }\sum_{j=1}^{|\beta|}k_j=\lambda, \sum_{j=1}^{|\beta|}|k_j|\ell_j=\beta\right\}.
\end{eqnarray*}
It is straightforward to show $|\lambda|\leq |\beta|$ whenever $p(\beta,\lambda)\neq\emptyset$. Then, for a smooth function $f:\R^d\rightarrow\C$, a smooth mapping $\varphi:\R^d\rightarrow\R^d$, and for every $\beta\in\N_0^d\backslash\{0\}$ we have
$$\left(f\circ \varphi\right)^{(\beta)}(x)=\sum_{\substack{\lambda\in\N_0^d\\1\leq |\lambda|\leq |\beta|}}f^{(\lambda)}(\varphi(x))\sum_{p(\beta,\lambda)}\beta!\prod_{j=1}^{|\beta|}\frac{\left(\varphi^{(\ell_j)}(x)\right)^{k_j}}{k_j!\left(\ell_j!\right)^{|k_j|}}$$
(see \cite[Remark 2.2]{CoSa1996}), where $\varphi^{(\ell_j)}(x)=(\varphi_1^{(\ell_{j})}(x),\ldots,\varphi_d^{(\ell_j)}(x))$ and where for $y=(y_1,\dots,y_d)\in\C^d$ and $k=(k_1,\ldots,k_d)\in\N_0^d$, as usual $y^k=y_1^{k_1}\cdots y_d^{k_d}$, so that
$$\left(\varphi^{(\ell_j)}(x)\right)^{k_j}=\prod_{i=1}^{d}\left(\varphi_i^{(\ell_{j})}(x)\right)^{k_{j,i}}=\prod_{i=1}^d\left(\frac{\partial^{|\ell_j|}}{\partial x_1^{\ell_{j,1}}\cdots\partial x_d^{\ell_{j,d}}}\varphi_i(x)\right)^{k_{j,i}},$$
with $\ell_j=(\ell_{j,1},\ldots,\ell_{j,d}), k_j=(k_{j,1},\ldots,k_{j,d})\in\N_0^d$. It follows immediately from the definition of the set $p(\beta,\lambda)$ that $p(\beta,0)=\emptyset$. Thus, employing the usual convention that the sum of summands indexed by the empty set equals zero, for every $\beta\in\N_0^d\backslash\{0\}$ we have
$$\left(f\circ \varphi\right)^{(\beta)}(x)=\sum_{\substack{\lambda\in\N_0^d\\0\leq |\lambda|\leq |\beta|}}f^{(\lambda)}(\varphi(x))\sum_{p(\beta,\lambda)}\beta!\prod_{j=1}^{|\beta|}\frac{\left(\varphi^{(\ell_j)}(x)\right)^{k_j}}{k_j!\left(\ell_j!\right)^{|k_j|}}.$$
Abusing notation, for $\beta=0$, we further set $p(0,\lambda)=\emptyset$ whenever $\lambda\in\N_0^d\backslash \{0\}$ and $p(0,0)=\{(0,0)\}$ ($0\in\N_0^d$) so that, by the usual convention $0!=1$ and $0^0=1$
$$\sum_{p(0,\lambda)}0!\prod_{j=1}^{|0|}\frac{\left(\varphi^{(\ell_j)}(x)\right)^{k_j}}{k_j!\left(\ell_j!\right)^{|k_j|}}=\begin{cases}
	0,& \lambda\neq 0,\\
	1,& \lambda=0,
\end{cases}$$
where we also applied the usual convention that the product of factors indexed by the empty set equals one. Thus, for $f$ and $\varphi$ as above, for $\psi\in C^\infty(\R^d)$, and for every multi index $\alpha\in\N_0^d$, by applying Leibniz' rule and Fa\`{a} di Bruno's formula, after reordering, we get
\begin{eqnarray*}
	(\psi\cdot (f\circ \varphi))^{(\alpha)}(x)&=&\sum_{\substack{\beta\in\N_0^d\\ \beta\leq \alpha}}\sum_{\substack{\lambda\in\N_0^d\\ 0\leq |\lambda|\leq|\beta|}} f^{(\lambda)}(\varphi(x)){\alpha \choose\beta} \psi^{(\alpha-\beta)}(x)\sum_{p(\beta,\lambda)}\beta!\prod_{j=1}^{|\beta|}\frac{\left(\varphi^{(\ell_j)}(x)\right)^{k_j}}{k_j!\left(\ell_j!\right)^{|k_j|}}\nonumber\\
	&=&\sum_{\substack{\beta\in\N_0^d\\ |\beta|\leq |\alpha|}}\sum_{\substack{\lambda\in\N_0^d\\ 0\leq |\lambda|\leq|\beta|}} f^{(\lambda)}(\varphi(x)){\alpha \choose\beta} \psi^{(\alpha-\beta)}(x)\sum_{p(\beta,\lambda)}\beta!\prod_{j=1}^{|\beta|}\frac{\left(\varphi^{(\ell_j)}(x)\right)^{k_j}}{k_j!\left(\ell_j!\right)^{|k_j|}}\\
	&=&\sum_{\substack{\lambda\in\N_0^d\\ 0\leq |\lambda|\leq|\alpha|}} f^{(\lambda)}(\varphi(x))\sum_{\substack{\beta\in\N_0^d\\ \beta\leq \alpha, |\lambda|\leq |\beta|}}{\alpha \choose\beta} \psi^{(\alpha-\beta)}(x)\sum_{p(\beta,\lambda)}\beta!\prod_{j=1}^{|\beta|}\frac{\left(\varphi^{(\ell_j)}(x)\right)^{k_j}}{k_j!\left(\ell_j!\right)^{|k_j|}},\nonumber
\end{eqnarray*}
where we have used that ${\alpha\choose\beta}=\prod_{j=1}^d{\alpha_j\choose\beta_j}=0$ if there are $\alpha_j,\beta_j\in\N_0$ with $\beta_j>\alpha_j$ since ${z\choose m}=\frac{z(z-1)\cdots(z-m+1)}{m!}$ for $z\in\C$ and $m\in\N_0$.



For $\psi\in C^\infty(\R^d)$, a smooth mapping $\varphi:\R^d\rightarrow\R^d$ and $\alpha,\lambda\in\N_0^d$ with $|\lambda|\leq |\alpha|$ we denote
\begin{equation}\label{eq:definition F}
	\Fmult(x):=\sum_{\substack{\beta\in\N_0^d\\ \beta\leq \alpha, |\lambda| \leq |\beta|}}{\alpha \choose\beta} \psi^{(\alpha-\beta)}(x)\sum_{p(\beta,\lambda)}\beta!\prod_{j=1}^{|\beta|}\frac{\left(\varphi^{(\ell_j)}(x)\right)^{k_j}}{k_j!\left(\ell_j!\right)^{|k_j|}}
\end{equation}
and thus
\begin{equation} 
	\label{eq: Leibniz and Faa di Bruno}
	(\psi \cdot (f \circ \varphi))^{(\alpha)}(x) = \sum_{\substack{\lambda\in\N_0^d\\ 0\leq |\lambda|\leq|\alpha|}} f^{(\lambda)}(\varphi(x)) \Fmult(x).
\end{equation}

Now we are ready to prove a first technical lemma which will also be used in section \ref{sec: power boundedness}.

\begin{lemma}
	\label{techlemma}
	Let $I$ be a non-empty set, $(\psi_i)_{i\in I}\in C^\infty(\R^d)^I$ and let $\varphi_i:\R^d\rightarrow\R^d$ be smooth mappings, $i\in I$. Assume that there are $\alpha,\lambda \in\N_0^d$ with $|\lambda|\leq |\alpha|$ and $p>0$ such that
	\begin{equation}\label{eq:tech lemma}
		\forall\,q>0:\,\sup_{i\in I}\sup_{x\in\R^d} \frac{(1+|x|)^p}{(1+|\varphi_i(x)|)^q} |\Fmulti(x)|=\infty.
	\end{equation}
	Then there is $f\in \mathscr{S}(\R^d)$ satisfying 
	$$\sup_{i\in I}\sup_{x\in\R^d} (1+|x|)^p |(\psi_i\cdot (f\circ\varphi_i))^{(\alpha)}(x) |=\infty.$$
\end{lemma}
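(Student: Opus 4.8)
The plan is to argue by contradiction, invoking the uniform boundedness principle on the Fréchet (in particular barrelled) space $\mathscr{S}(\R^d)$. Suppose the conclusion fails, i.e.\ $\sup_{i\in I}\sup_{x\in\R^d}(1+|x|)^p|(\psi_i\cdot(f\circ\varphi_i))^{(\alpha)}(x)|<\infty$ for \emph{every} $f\in\mathscr{S}(\R^d)$. For each fixed pair $(i,x)\in I\times\R^d$ the functional $\Lambda_{i,x}\colon g\mapsto (1+|x|)^p(\psi_i\cdot(g\circ\varphi_i))^{(\alpha)}(x)$ is linear and continuous on $\mathscr{S}(\R^d)$, since by \eqref{eq: Leibniz and Faa di Bruno} it equals the finite linear combination $\sum_{0\le|\mu|\le|\alpha|}(1+|x|)^p F_{\alpha,\mu}^{\varphi_i,\psi_i}(x)\,g^{(\mu)}(\varphi_i(x))$ of the continuous point evaluations $g\mapsto g^{(\mu)}(\varphi_i(x))$ with fixed scalar coefficients. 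Our assumption says the family $\{\Lambda_{i,x}:i\in I,\,x\in\R^d\}$ is pointwise bounded, hence equicontinuous by Banach--Steinhaus, so there are $C>0$ and $N\in\N$ with
$$(1+|x|)^p\,|(\psi_i\cdot(f\circ\varphi_i))^{(\alpha)}(x)|\le C\,\|f\|_N\qquad(f\in\mathscr{S}(\R^d),\ i\in I,\ x\in\R^d),$$
where $\|f\|_N=\max_{|\gamma|\le N}\sup_{z\in\R^d}(1+|z|)^N|f^{(\gamma)}(z)|$.

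The key step is to feed into this estimate a family of test functions that isolates precisely the $\lambda$-summand of \eqref{eq: Leibniz and Faa di Bruno}. Fix $\chi\in C^\infty_c(\R^d)$ with $\chi\equiv 1$ on a neighbourhood of $0$, and for $y\in\R^d$ put $f_y(z):=\frac{(z-y)^\lambda}{\lambda!}\,\chi(z-y)\in\mathscr{S}(\R^d)$. Two facts are needed. First, a Leibniz expansion at the point $y$, using that every derivative of $\chi$ of positive order vanishes at $0$ while $\chi(0)=1$, gives $f_y^{(\mu)}(y)=\delta_{\mu,\lambda}$ for \emph{all} multi indices $\mu$ (here one uses $\partial^\mu\big((z-y)^\lambda\big)\big|_{z=y}=\lambda!\,\delta_{\mu,\lambda}$). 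Second, since $\supp f_y$ is a ball of fixed radius around $y$, on which $(1+|z|)\le c_\chi(1+|y|)$ and the derivatives $f_y^{(\gamma)}$ are bounded by a constant depending only on $\gamma$, $\lambda$, $\chi$ (not on $y$), one obtains a uniform bound $\|f_y\|_N\le C'(1+|y|)^N$.

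Now fix $i\in I$ and $x\in\R^d$ and apply the displayed inequality to $f:=f_{\varphi_i(x)}$, evaluating at the point $x$. By \eqref{eq: Leibniz and Faa di Bruno} and the first fact only the summand $\mu=\lambda$ survives, so $(\psi_i\cdot(f_{\varphi_i(x)}\circ\varphi_i))^{(\alpha)}(x)=\Fmulti(x)$, whence
$$(1+|x|)^p\,|\Fmulti(x)|\le C\,\|f_{\varphi_i(x)}\|_N\le C\,C'\,(1+|\varphi_i(x)|)^N.$$
As $i\in I$ and $x\in\R^d$ were arbitrary, $\sup_{i\in I}\sup_{x\in\R^d}\frac{(1+|x|)^p}{(1+|\varphi_i(x)|)^N}|\Fmulti(x)|\le C\,C'<\infty$, contradicting hypothesis \eqref{eq:tech lemma} with $q=N$.

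I expect the only delicate points to be bookkeeping rather than conceptual. One must be sure that $f_y^{(\mu)}(y)=\delta_{\mu,\lambda}$ holds for \emph{every} $\mu$, not merely for $|\mu|\le|\alpha|$, so that all the unwanted summands of \eqref{eq: Leibniz and Faa di Bruno} -- whose coefficients $F_{\alpha,\mu}^{\varphi_i,\psi_i}$ are not under control -- are annihilated; and the seminorm bound $\|f_y\|_N\le C'(1+|y|)^N$ must be uniform in $y$, so that the exponent $N$ delivered by Banach--Steinhaus is exactly the one to return to \eqref{eq:tech lemma}. A more constructive route -- building $f$ as a series $\sum_n c_n f_{y_n}$ of translated test bumps at points $y_n=\varphi_{i_n}(x_n)$ extracted from \eqref{eq:tech lemma} -- also works, but forces one to separate the $y_n$ and to distinguish whether they remain bounded; the uniform boundedness argument sidesteps this.
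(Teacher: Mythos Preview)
Your argument is correct and takes a genuinely different route from the paper. The paper proceeds constructively: it selects $\lambda_0$ minimal (with respect to the linear order $\prec$) among the $\lambda$ satisfying \eqref{eq:tech lemma}, extracts sequences $(x_j)$, $(i_j)$ witnessing the blow-up, and then builds $f$ explicitly as a lacunary series of translated bump functions centered at the points $\varphi_{i_j}(x_j)$, distinguishing whether $(|\varphi_{i_j}(x_j)|)_j$ is bounded or not. The minimality of $\lambda_0$ is what allows them to control the contributions of the terms $\lambda\prec\lambda_0$ in the bounded case.

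Your Banach--Steinhaus approach is cleaner: by passing through equicontinuity you obtain a single exponent $N$ uniformly over all $(i,x)$, and the parametrized family $f_y(z)=\frac{(z-y)^\lambda}{\lambda!}\chi(z-y)$ kills every summand of \eqref{eq: Leibniz and Faa di Bruno} except the $\lambda$-one in a single stroke, with no need to minimize $\lambda$, separate points, or split cases. The price is that the proof is non-constructive---you do not exhibit the offending $f$---whereas the paper's argument does. Your final paragraph anticipates exactly the paper's strategy and its attendant case distinction, which is a fair summary of the trade-off.
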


\begin{proof}
	Let $\lambda_0\in\N_0^d$ be the minimum with respect to the linear ordering $\prec$ of $\N_0^d$ of the finite set of $\lambda\in\N_0^d$ with $|\lambda|\leq |\alpha|$ for which \eqref{eq:tech lemma} holds. Let $(x_j)_{j\in\N}$, $(i_j)_{j\in\N}$ be sequences in $\R^d$ and $I$, respectively, such that $|x_{j+1}|>|x_{j}|+1$ and
	\begin{equation}
		\label{unbounded2}
		\frac{(1+|x_j|)^p}{(1+|\varphi_{i_j}(x_j)|)^j} |\Fmultoj(x_j)| >j.
	\end{equation}
	At this step, we continue the proof by considering two cases. First we assume $(|\varphi_{i_j}(x_j)|)_{j\in\N}$ to be unbounded. By making an abuse of notation and identifying $(x_j)_{j\in\N}$ with a subsequence, we can assume  $|x_{j+1}|>|x_{j}|+1$ and $|\varphi_{i_{j+1}}(x_{j+1})|>|\varphi_{i_j}(x_{j})|+1$ for every $j\in\N$, and let $(l(j))_{j\in\N}$ be a strictly increasing sequence of natural numbers such that 
	\begin{equation}
		\label{unbounded}
		\lim_{j\rightarrow\infty}  \frac{(1+|x_j|)^p}{(1+|\varphi_{i_j} (x_j)|)^{l(j)}} |\Fmultoj(x_j)| =\infty.
	\end{equation}
	
	Fix $\varrho\in \mathscr{D}(B(0,1/2))$, $\varrho^{(\lambda_0)}(0)=1$ and $\varrho^{(\lambda)}(0)=0$ for $\lambda\in\N_0^d\backslash\{\lambda_0\}$. We define
	$$f(x):=\sum_{j\in\N}\frac{\varrho(x-\varphi_{i_j}(x_j))}{(1+|\varphi_{i_j}(x_j)|)^{l(j)}}.$$
	Since the summands which define $f$ are smooth functions with mutually disjoint compact supports and because $\lim_j l(j)=\infty$, it is standard to check $f\in \mathscr{S}(\R^d)$. Moreover, by \eqref{eq: Leibniz and Faa di Bruno} and the definition of $\Fmult$	  
	\begin{eqnarray*}
		(1+|x_j|)^p|(\psi_{i_j}\cdot (f\circ \varphi_{i_j}))^{(\alpha)}(x_j)|&=&(1+|x_j|)^p\left|\sum_{\substack{\lambda\in\N_0^d\\ 0\leq|\lambda|\leq|\alpha|}} f^{(\lambda)}(\varphi_{i_j}(x_j))\Fmultj(x_j)\right|\\ &=&(1+|x_j|)^p|f^{(\lambda_0)}(\varphi_{i_j}(x_j))\Fmultoj(x_j)|\\
		&=&\frac{(1+|x_j|)^p}{(1+|\varphi_{i_j}(x_j)|)^{
				l(j)}} |\Fmultoj(x_j)| 
	\end{eqnarray*}
	so that by \eqref{unbounded}
	$$\sup_{i\in I}\sup_{x\in\R^d}(1+|x|)^p |(\psi_i\cdot (f\circ\varphi_i))^{(\alpha)}(x)|\geq\sup_{j\in\N}(1+|x_j|)^p|(\psi_{i_j}\cdot (f\circ \varphi_{i_j}))^{(\alpha)}(x_j)|=\infty$$
	as desired.
	
	Next, let us suppose that $(|\varphi_{i_j}(x_j)|)_{j\in\N}$ is bounded. 
	We observe that the choice of $\lambda_0$ implies the existence of $q>0$ and $C\in (0,\infty)$ such that, for each $\lambda\in\N_0^d$ with $\lambda\prec\lambda_0$,
	$$
	\sup_{x\in\R^d}\frac{(1+|x|)^p}{(1+|\varphi_{i_j}(x)|)^q} |\Fmultj(x)|\leq C.
	$$
	Next, we fix $f\in \mathscr{D}(\R^d)$ such that $f(x)=x^{\lambda_0}/\lambda_0!$ in a neighborhood of the bounded subset $\{\varphi_{i_j}(x_j);\,j\in\N\}$ of $\R^d$. Then, a moment's reflection reveals that $f^{(\lambda)}\equiv 0$ in a neighborhood of $\{\varphi_{i_j}(x_j);\,j\in\N\}$ whenever $\lambda_0\prec\lambda$ and obviously, for every $q>0$ there is $M\in (0,\infty)$ with
	$$\sup_{\substack{j\in\N,\\ \lambda\in\N_0^d, |\lambda|\leq |\alpha|}}\left|f^{(\lambda)}(\varphi_{i_j}(x_j))\right|(1+|\varphi_{i_j}(x_j)|)^q\leq M.$$
	
	Now we have by \eqref{eq: Leibniz and Faa di Bruno} and \eqref{unbounded2}, for $j\in\N$,
	\begin{eqnarray*}
		&&(1+|x_j|)^p|(\psi_{i_j}\cdot (f\circ \varphi_{i_j}))^{(\alpha)}(x_j)|=(1+|x_j|)^p\left|\sum_{\substack{\lambda\in\N_0^d\\ 0\leq|\lambda|\leq|\alpha|}} f^{(\lambda)}(\varphi_{i_j}(x_j))\Fmultj(x_j)\right|\\
		&\geq& (1+|x_j|)^p\left|f^{(\lambda_0)}(\varphi_{i_j}(x_j))\Fmultoj(x_j)\right|\\
		&&-\sum_{\lambda\prec\lambda_0}\left|f^{(\lambda)}(\varphi_{i_j}(x_j))\right|(1+|\varphi_{i_j}(x_j)|)^q\frac{(1+|x_j|)^p}{(1+|\varphi_{i_j}(x_j)|)^q}\left|\Fmultj(x_j)\right|\\
		&\geq&(1+|\varphi_{i_j}(x_j)|)^j\frac{(1+|x_j|)^p}{(1+|\varphi_{i_j}(x_j)|)^j}\left|\Fmultoj(x_j)\right|-\sum_{\lambda\prec\lambda_0}MC \geq j-\sum_{\lambda\prec\lambda_0}MC 
	\end{eqnarray*}
	so that again
	$$\sup_{i\in I}\sup_{x\in\R^d}(1+|x|)^p |(\psi_i\cdot (f\circ\varphi_i))^{(\alpha)}(x)|\geq\sup_{j\in\N}(1+|x_j|)^p|(\psi_{i_j}\cdot (f\circ \varphi_{i_j}))^{(\alpha)}(x_j)|=\infty$$
	as desired.
\end{proof}

\begin{theorem}\label{theo: weighted compositions characterized}
	Let $\psi\in C^\infty(\R^d)$ and let $\varphi:\R^d\rightarrow\R^d$ be smooth. Then, the following are equivalent.
	\begin{itemize}
		\item[(i)] $\psi\cdot (f\circ\varphi)\in \mathscr{S}(\R^d)$ for every $f\in \mathscr{S}(\R^d)$.
		\item[(ii)] The weighted composition operator $C_{\psi,\varphi}:\mathscr{S}(\R^d)\rightarrow \mathscr{S}(\R^d), f\mapsto \psi\cdot (f\circ\varphi)$ is correctly defined and continuous.
		\item[(iii)] For all $\alpha,\lambda\in\N_0^d$ with $|\lambda|\leq|\alpha|$ and for every $p>0$ there exists $q>0$ such that
		$$\sup_{x\in\R^d} \frac{(1+|x|)^p}{(1+|\varphi(x)|)^q} |\Fmult(x)|<\infty.$$
	\end{itemize}
\end{theorem}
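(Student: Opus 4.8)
The plan is to establish the cycle of implications (ii) $\Rightarrow$ (i) $\Rightarrow$ (iii) $\Rightarrow$ (ii). The implication (ii) $\Rightarrow$ (i) is immediate: if $C_{\psi,\varphi}$ is correctly defined as a map $\mathscr{S}(\R^d)\to\mathscr{S}(\R^d)$, then in particular $\psi\cdot(f\circ\varphi)\in\mathscr{S}(\R^d)$ for every $f\in\mathscr{S}(\R^d)$.

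For (i) $\Rightarrow$ (iii) I would argue by contraposition, reducing to Lemma \ref{techlemma}. If (iii) fails, there exist $\alpha,\lambda\in\N_0^d$ with $|\lambda|\leq|\alpha|$ and $p>0$ such that $\sup_{x\in\R^d}\frac{(1+|x|)^p}{(1+|\varphi(x)|)^q}|\Fmult(x)|=\infty$ for every $q>0$. Applying Lemma \ref{techlemma} with the one-point index set $I=\{*\}$ and $\psi_*=\psi$, $\varphi_*=\varphi$ -- for which the hypothesis \eqref{eq:tech lemma} is precisely the displayed failure of (iii) -- produces $f\in\mathscr{S}(\R^d)$ with $\sup_{x\in\R^d}(1+|x|)^p|(\psi\cdot(f\circ\varphi))^{(\alpha)}(x)|=\infty$. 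Hence $\psi\cdot(f\circ\varphi)\notin\mathscr{S}(\R^d)$, so (i) fails.

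For (iii) $\Rightarrow$ (ii) I would use the Leibniz--Fa\`a di Bruno identity \eqref{eq: Leibniz and Faa di Bruno}. Fix $\alpha\in\N_0^d$ and $p>0$. For $\lambda\in\N_0^d$ with $|\lambda|\leq|\alpha|$ and $q>0$ write $c_{\lambda,q}(f):=\sup_{y\in\R^d}(1+|y|)^q|f^{(\lambda)}(y)|$, which is a continuous seminorm on $\mathscr{S}(\R^d)$, and note that $|f^{(\lambda)}(\varphi(x))|\leq c_{\lambda,q}(f)(1+|\varphi(x)|)^{-q}$ for all $x$. By (iii) choose, for each such $\lambda$, a number $q_\lambda>0$ with $K_\lambda:=\sup_{x\in\R^d}\frac{(1+|x|)^p}{(1+|\varphi(x)|)^{q_\lambda}}|\Fmult(x)|<\infty$. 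Then \eqref{eq: Leibniz and Faa di Bruno} and the triangle inequality give
$$\sup_{x\in\R^d}(1+|x|)^p\,|(\psi\cdot(f\circ\varphi))^{(\alpha)}(x)|\ \leq\ \sum_{\substack{\lambda\in\N_0^d\\ 0\leq|\lambda|\leq|\alpha|}}K_\lambda\,c_{\lambda,q_\lambda}(f)\ <\ \infty,$$
a finite sum. Since the quantities on the left, taken over all $\alpha$ and $p$, form a fundamental system of seminorms for $\mathscr{S}(\R^d)$, this simultaneously shows that $\psi\cdot(f\circ\varphi)\in\mathscr{S}(\R^d)$ for every $f$ and that the (manifestly linear) operator $C_{\psi,\varphi}$ is continuous, i.e.\ (ii). (Alternatively, granting (i), continuity follows from the closed graph theorem, since convergence in $\mathscr{S}(\R^d)$ forces pointwise convergence, so the graph of $C_{\psi,\varphi}$ is closed.)

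\textbf{Main obstacle.} The genuinely technical content -- constructing, from a failure of the quantitative estimate, a function in $\mathscr{S}(\R^d)$ whose image leaves $\mathscr{S}(\R^d)$ -- has already been isolated in Lemma \ref{techlemma} (with its split into the cases where $(|\varphi_{i_j}(x_j)|)_j$ is unbounded, resp.\ bounded). In the theorem itself the only delicate points are: (a) recognizing that the negation of (iii) is verbatim the hypothesis \eqref{eq:tech lemma} of that lemma for a singleton index set; and (b) in (iii) $\Rightarrow$ (ii), being careful to let the exponent $q=q_\lambda$ depend on $\lambda$ (and on $p,\alpha$) and to exploit that only finitely many $\lambda$ occur in \eqref{eq: Leibniz and Faa di Bruno}, so that the resulting bound is a bona fide continuity estimate with respect to the standard seminorms of $\mathscr{S}(\R^d)$.
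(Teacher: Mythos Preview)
Your proposal is correct and follows essentially the same route as the paper: (ii)$\Rightarrow$(i) is trivial, (i)$\Rightarrow$(iii) is obtained by contraposition via Lemma~\ref{techlemma} with a singleton index set, and (iii)$\Rightarrow$(ii) is the direct estimate from \eqref{eq: Leibniz and Faa di Bruno}. You have simply spelled out the ``straightforward'' part (iii)$\Rightarrow$(ii) in full detail, which the paper omits.
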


\begin{proof}
	Obviously, (ii) implies (i), and (i) implies (iii) by Lemma~\ref{techlemma} applied to $I=\{1\}$ and $\psi_1=\psi, \varphi_1=\varphi$. Additionally, using \eqref{eq: Leibniz and Faa di Bruno} it is straightforward to show that (iii) implies (ii).
\end{proof}

The evaluation of condition (iii) from the above theorem for concrete $\psi,\varphi$ might be quite involved  due to the rather complicated expression for $\Fmult$ in \eqref{eq:definition F}. Therefore, we now give necessary and sufficient conditions for $C_{\psi,\varphi}$ to act on $\mathscr{S}(\R^d)$ which are easier to evaluate. Since for $\alpha\in \N_0^d$ and $\lambda=0$ it holds
$\Fmult(x)=\psi^{(\alpha)}(x)$, Theorem~\ref{theo: weighted compositions characterized} (iii) immediately implies the following necessary condition.

\begin{corollary}
	\label{alpha0cor}
	If the weighted composition operator $C_{\psi,\varphi}:\mathscr{S}(\R^d)\rightarrow \mathscr{S}(\R^d), f\mapsto \psi\cdot (f\circ\varphi)$ is well defined then for each $\alpha\in\N_0^d$ and $p>0$ there is $q>0$ such that 
	$$\sup_{x\in\R^d}\frac{(1+|x|)^p}{(1+|\varphi(x)|)^q}|\psi^{(\alpha)}(x)|<\infty.$$
\end{corollary}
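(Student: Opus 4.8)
The plan is to obtain Corollary~\ref{alpha0cor} as an immediate consequence of Theorem~\ref{theo: weighted compositions characterized}. First I would observe that the hypothesis ``$C_{\psi,\varphi}$ is well defined'' is precisely condition (i) of that theorem (it acts on $\mathscr{S}(\R^d)$), which is equivalent to condition (iii). So it suffices to extract the claimed estimate from (iii), and the natural way to do this is to specialise (iii) to $\lambda=0$.

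The only thing that needs to be checked is the evaluation of $F_{\alpha,0}^{\varphi,\psi}$. Going back to the definition \eqref{eq:definition F}, for $\lambda=0$ the inner sums run over $p(\beta,0)$. By the conventions fixed before \eqref{eq:definition F}, $p(\beta,0)=\emptyset$ for every $\beta\in\N_0^d\backslash\{0\}$, so all of those terms vanish, while $p(0,0)=\{(0,0)\}$ contributes the product over the empty index set, which equals $1$. Hence only the summand $\beta=0$ survives, and since $\binom{\alpha}{0}=1$ we get $F_{\alpha,0}^{\varphi,\psi}(x)=\psi^{(\alpha)}(x)$ for every $\alpha\in\N_0^d$ and every $x\in\R^d$.

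Finally, since $|\lambda|=0\leq|\alpha|$ holds for every $\alpha\in\N_0^d$, condition (iii) of Theorem~\ref{theo: weighted compositions characterized} applied with this particular $\lambda=0$ asserts exactly that for each $\alpha\in\N_0^d$ and each $p>0$ there is $q>0$ with $\sup_{x\in\R^d}(1+|x|)^p(1+|\varphi(x)|)^{-q}|\psi^{(\alpha)}(x)|<\infty$, which is the statement of the corollary. I do not expect any genuine obstacle here: the entire content is carried by Theorem~\ref{theo: weighted compositions characterized}, and the corollary is simply the $\lambda=0$ slice of condition (iii) once the (trivial) identity $F_{\alpha,0}^{\varphi,\psi}=\psi^{(\alpha)}$ has been recorded.
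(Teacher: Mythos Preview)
Your proposal is correct and matches the paper's own argument exactly: the paper states just before the corollary that for $\lambda=0$ one has $F_{\alpha,0}^{\varphi,\psi}(x)=\psi^{(\alpha)}(x)$, so that Theorem~\ref{theo: weighted compositions characterized}(iii) immediately yields the claim. Your added justification of this identity via the conventions on $p(\beta,0)$ is a faithful unpacking of what the paper leaves implicit.
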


Theorem \ref{theo: weighted compositions characterized} immediately implies the next sufficient condition for $C_{\psi,\varphi}$ to act on $\mathscr{S}(\R^d)$. It will be shown in Theorem~\ref{theo: small decay} below that under some mild additional assumptions on $\psi,\varphi$ this sufficient condition is also necessary.

\begin{corollary}
	\label{sufficient}
	Let $\psi\in C^\infty(\R^d)$ and let $\varphi:\R^d\rightarrow\R^d$ be smooth. Assume that for each $\alpha\in\N_0^d$ and $p>0$ there is $q>0$ such that 
	$$\sup_{x\in\R^d}\frac{(1+|x|)^p}{(1+|\varphi(x)|)^q}|\psi^{(\alpha)}(x)|<\infty,$$
	and for each $\alpha\in\N_0^d$ there is $q>0$ such that 
	$$\sup_{x\in\R^d}\frac{1}{(1+|\varphi(x)|)^q}|\varphi^{(\alpha)}(x)|<\infty.$$
	Then $C_{\psi,\varphi}$ acts on $\mathscr{S}(\R^d)$.
\end{corollary}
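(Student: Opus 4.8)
The plan is to verify condition (iii) of Theorem~\ref{theo: weighted compositions characterized} directly, using the explicit formula \eqref{eq:definition F} for $\Fmult$ and estimating each factor. Fix $\alpha,\lambda\in\N_0^d$ with $|\lambda|\leq|\alpha|$ and $p>0$; we must produce $q>0$ with
$$\sup_{x\in\R^d}\frac{(1+|x|)^p}{(1+|\varphi(x)|)^q}|\Fmult(x)|<\infty.$$
Recall that $\Fmult$ is a finite sum over $\beta\leq\alpha$ with $|\lambda|\leq|\beta|$ of terms of the form $\binom{\alpha}{\beta}\psi^{(\alpha-\beta)}(x)\sum_{p(\beta,\lambda)}\beta!\prod_{j=1}^{|\beta|}(\varphi^{(\ell_j)}(x))^{k_j}/(k_j!(\ell_j!)^{|k_j|})$. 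So it suffices to bound each such summand, and since the sum is finite and the index sets $p(\beta,\lambda)$ are finite, it suffices to bound, for fixed $\beta$ and a fixed element of $p(\beta,\lambda)$, the quantity
$$\frac{(1+|x|)^p}{(1+|\varphi(x)|)^{q}}\,|\psi^{(\alpha-\beta)}(x)|\prod_{j=1}^{|\beta|}\big|\varphi^{(\ell_j)}(x)\big|^{|k_j|}.$$

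The key step is to split the power of $(1+|\varphi(x)|)^{-1}$ among the factors and apply the two hypotheses. By the first hypothesis, applied to the multi index $\alpha-\beta$ and the exponent $p$, there is $q_0>0$ with $(1+|x|)^p|\psi^{(\alpha-\beta)}(x)|\leq C_0(1+|\varphi(x)|)^{q_0}$ for all $x$. By the second hypothesis, applied to each multi index $\ell_j$ occurring (finitely many of them), there is $q_j>0$ with $|\varphi^{(\ell_j)}(x)|\leq C_j(1+|\varphi(x)|)^{q_j}$; note each $\varphi^{(\ell_j)}(x)\in\R^d$, so $|\varphi^{(\ell_j)}(x)|$ here denotes the Euclidean norm, and $|(\varphi^{(\ell_j)}(x))^{k_j}|\leq|\varphi^{(\ell_j)}(x)|^{|k_j|}\leq C_j^{|k_j|}(1+|\varphi(x)|)^{q_j|k_j|}$ (one could also estimate each coordinate separately and take the maximum of the corresponding exponents). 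Multiplying these estimates together,
$$(1+|x|)^p\,|\psi^{(\alpha-\beta)}(x)|\prod_{j=1}^{|\beta|}\big|\varphi^{(\ell_j)}(x)\big|^{|k_j|}\leq C\,(1+|\varphi(x)|)^{q_0+\sum_{j=1}^{|\beta|}q_j|k_j|}$$
with $C=C_0\prod_j C_j^{|k_j|}$. Hence choosing $q$ to be the maximum, over the finitely many choices of $\beta\leq\alpha$ and of elements of $p(\beta,\lambda)$, of the exponents $q_0+\sum_j q_j|k_j|$, we obtain $(1+|x|)^p|\Fmult(x)|\leq C'(1+|\varphi(x)|)^{q}$ for a suitable constant $C'$, which is exactly condition (iii). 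Theorem~\ref{theo: weighted compositions characterized} then gives that $C_{\psi,\varphi}$ acts on $\mathscr{S}(\R^d)$.

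There is no real obstacle here; the only points requiring a little care are bookkeeping (the exponents $q_0,q_j$ depend on $\beta$ and on the element of $p(\beta,\lambda)$, so one first fixes these, gets an exponent, and only at the end takes the maximum over the finitely many terms) and the observation that a power $(1+|\varphi(x)|)^{q_j|k_j|}$ with $|k_j|\geq1$ is still of the required form. Strictly speaking, when $|\beta|=0$ (which forces $\lambda=0$) the product is empty and $\Fmult(x)=\psi^{(\alpha)}(x)$, so the bound reduces to the first hypothesis alone; this degenerate case is harmless and can be mentioned in passing.
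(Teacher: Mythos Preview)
Your proof is correct and is exactly the argument the paper has in mind: the paper merely states that the corollary follows immediately from Theorem~\ref{theo: weighted compositions characterized}, and what you have written is precisely the straightforward verification of condition (iii) there by inserting the two hypotheses into the finite sum defining $\Fmult$. The bookkeeping remarks (taking maxima over the finitely many $\beta$ and elements of $p(\beta,\lambda)$, and the degenerate case $\lambda=0$) are handled appropriately.
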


\begin{remark}\label{rem: multiplication}
	For the special case of a multiplication operator, i.e.\ $\varphi(x)=x$ for all $x\in\R^d$, for $k_j,\ell_j\in\N_0^d$ we have
	$$\left(\varphi^{(\ell_j)}(x)\right)^{k_j}=\prod_{i=1}^d\left(\partial^{\ell_j} x_i\right)^{k_j}=\begin{cases}
		1,& \text{ if }\ell_j=k_j=0,\\
		0,& \text{ if }|\ell_j|\geq 2,\\
		0,& \text{ if }|\ell_j|=1, k_j\notin\text{span}\{\ell_j\},\\
		1,& \text{ if }|\ell_j|=1, k_j\in\text{span}\{\ell_j\}.
	\end{cases}$$
	Therefore, defining
	$$p_0(\beta,\lambda):=\{(k_1,\ldots,k_{|\beta|};\ell_1,\ldots,\ell_{|\beta|})\in p(\beta,\lambda):\,|\ell_{|\beta|}|\leq 1\text{ and }k_j\in\text{span}\{\ell_j\}\text{ for all }1\leq j\leq |\beta|\}$$
	it follows that
	$$\Fmult(x)=\sum_{\substack{\beta\in\N_0^d\\\beta\leq \alpha, |\lambda|\leq|\beta|}} {\alpha\choose\beta}\psi^{(\alpha-\beta)}(x)\sum_{p_0(\beta,\lambda)}\beta!\prod_{j=1}^{|\beta|}\frac{1}{k_j!}.$$
	In order to continue, let $e_t=(\delta_{j,t})_{1\leq j\leq d}$ (Kronecker's $\delta$), $1\leq t\leq d$, be the standard basis vectors of $\R^d$ and for $\beta\in\N_0^d\backslash\{0\}$, $\beta=\sum_{t=1}^d \beta_t e_t$ let $I(\beta):=\{1\leq t\leq d:\,\beta_t\neq 0\}$ so that $\emptyset\neq I(\beta)=\{t_1,\ldots,t_\beta\}$ with $1\leq t_1<\ldots<t_\beta\leq d$. Moreover, we denote the number of elements of $I(\beta)$ by $|I(\beta)|$. With this notation, for $\beta\in\N_0^d\backslash\{0\}$ we conclude
	\begin{eqnarray*}
		p_0(\beta,\lambda)&=&\left\{(k_1,\ldots,k_{|\beta|};\ell_1,\ldots,\ell_{|\beta|})\in p(\beta,\lambda):\,|\ell_{|\beta|}|= 1, k_j\in\text{span}\{\ell_j\}\text{ for all }1\leq j\leq |\beta|,\textcolor{white}{\sum_{j=1}^{|\beta|}|k_j|\ell_j=\beta}\right.\\
		&&\quad k_j=\ell_j=0\text{ for }1\leq j\leq |\beta|-|I(\beta)|, \ell_{|\beta|-|I(\beta)|+1}=e_{t_1},\ldots,\ell_{|\beta|}=e_{t_\beta},\\
		&&\quad\left. |k_j|>0 \text{ for }|\beta|-|I(\beta)|+1\leq j\leq |\beta|, \sum_{j=1}^{|\beta|}k_j=\lambda, \sum_{j=1}^{|\beta|}|k_j|\ell_j=\beta\right\}\\
		&=&\begin{cases}
			\quad\emptyset,&\text{ if }\beta\neq\lambda,\\
			\{(k_1,\ldots,k_{|\beta|};\ell_1,\ldots,\ell_{|\beta|})\in\N_0^{2|\beta|d}:\,k_j=\ell_j=0, 1\leq j\leq |\beta|-|I(\beta)|,&\\ \quad \ell_{|\beta|-|I(\beta)|+1}=e_{t_\beta},\ell_{|\beta|-|I(\beta)|+2}=e_{t_\beta-1},\ldots,\ell_{|\beta|}=e_{t_1},&\\ \quad k_{|\beta|-|I(\beta)|+1}=\beta_{t_\beta}e_{t_\beta},k_{|\beta|-|I(\beta)|+2}=\beta_{t_\beta-1}e_{t_\beta-1},\ldots,k_{|\beta|}=\beta_{t_1}e_{t_1}\},&\text{ if }\beta=\lambda,
		\end{cases}
	\end{eqnarray*}
	so that for $\lambda\neq 0$
	$$\Fmult(x)={\alpha\choose\lambda}\psi^{(\alpha-\lambda)}(x)$$
	which also holds true for $\lambda= 0$ (recall that the above multinomial coefficient is zero whenever $\lambda\nleq\alpha$). 
	
	Hence, Theorem~\ref{theo: weighted compositions characterized} characterizes those $\psi\in C^\infty(\R^d)$ for which the corresponding multiplication operator $M_\psi:\mathscr{S}(\R^d)\rightarrow \mathscr{S}(\R^d), f\mapsto \psi f$ is correctly defined by the property that for every $\alpha,\lambda\in\N_0^d$ with $\lambda\leq \alpha$ and every $p>0$ there is $q>0$ such that
	$$\sup_{x\in\R^d}\frac{(1+|x|)^p}{(1+|x|)^q}|\psi^{(\alpha-\lambda)}(x)|<\infty.$$
	Obviously, this holds precisely when for each $\gamma\in\N_0^d$ there is $r>0$ such that
	$$\sup_{x\in\R^d}(1+|x|)^{-r}|\psi^{(\gamma)}(x)|<\infty.$$
	Thus, for the special case $\varphi(x)=x$, Theorem~\ref{theo: weighted compositions characterized} gives the well known characterization of the space of multipliers of $\mathscr{S}(\R^d)$ as $\mathscr{O}_M(\R^d)$. 
	
	Additionally, by Corollary \ref{alpha0cor}, if $\varphi$ is a non-constant elliptic polynomial, $C_{\psi,\varphi}$ acts on $\mathscr{S}(\R^d)$ if and only if $\psi\in\mathscr{O}_M(\R^d)$. Therefore, if $d=1$, this equivalence holds whenever $\varphi$ is a non-constant univariate polynomial.
\end{remark}

\begin{remark}\label{rem: one dimensional}
	Next, we consider the special case $d=1$ in Theorem~\ref{theo: weighted compositions characterized}. We define for $\beta,\lambda\in\N_0\backslash\{0\}$ with $\lambda\leq\beta$
	$$q(\beta,\lambda)=\left\{(i_1,\ldots,i_\beta)\in\N_0^{\beta}:\,\sum_{j=1}^\beta i_j=\lambda, \sum_{j=1}^\beta j i_j=\beta\right\},$$
	and for $(i_1,\ldots,i_\beta)\in q(\beta,\lambda)$ we set
	$$L(i_1,\ldots,i_\beta):=\{ 1 \leq j \leq \beta :\,i_j\neq 0\}\text{ and }s(i_1,\ldots,i_\beta):=|L(i_1,\ldots,i_\beta)|.$$
	Then, $\emptyset\neq L(i_1,\ldots,i_\beta)\subset\{1,\ldots,\beta\}$ and $1\leq s(i_1,\ldots,i_\beta)\leq \beta$. It is straightforward that the correspondence $q(\beta,\lambda) \rightarrow p(\beta,\lambda)$ which maps $(i_1, \ldots,i_{\beta})$ into
	$$(0,\ldots,0,i_{\min L(i_1,\ldots,i_\beta)},\ldots,i_{\max L(i_1,\ldots,i_\beta)};0,\ldots,0,\min L(i_1,\ldots,i_\beta),\ldots,\max L(i_1,\ldots,i_\beta))$$
	is correctly defined and bijective, where we have (twice) $\beta-s$ zeros. With this it follows for $d=1$ that for $\psi,\varphi\in C^\infty(\R)$, $\varphi$ real valued, $\alpha,\lambda\in\N_0$ with $\lambda\leq \alpha$
	$$\Fmult(x)=\sum_{\beta=\lambda}^\alpha {\alpha\choose\beta} \psi^{(\alpha-\beta)}(x)\sum_{\substack{i_1,\ldots,i_\beta\in\N_0,\\ i_1+\cdots+i_\beta=\lambda,\\ i_1+2i_2+\cdots+\beta i_\beta=\beta}} \beta! \prod_{j=1}^\beta \frac1{i_j!} \left(\frac{\varphi^{(j)}(x)}{j!}\right)^{i_j}.$$
	Noticing that $i_j=0$ for $j>\beta-\lambda+1$ the above simplifies to
	\begin{eqnarray*}
		\Fmult(x)&=&\sum_{\beta=\lambda}^\alpha {\alpha\choose\beta} \psi^{(\alpha-\beta)}(x)\sum_{\substack{i_1,\ldots,i_{\beta-\lambda+1}\in\N_0,\\ i_1+\cdots+i_{\beta-\lambda+1}=\lambda,\\ i_1+2i_2+\cdots+\beta i_{\beta-\lambda+1}=\beta}} \beta! \prod_{j=1}^{\beta-\lambda+1} \frac1{i_j!} \left(\frac{\varphi^{(j)}(x)}{j!}\right)^{i_j}\\
		&=&\sum_{\beta=\lambda}^\alpha {\alpha\choose\beta} \psi^{(\alpha-\beta)}(x) B_{\beta,\lambda}\left(\varphi'(x),\ldots,\varphi^{(\beta-\lambda+1)}(x)\right),
	\end{eqnarray*}
	where $B_{\beta,\lambda}$ denotes the corresponding Bell polynomial, i.e.\
	\begin{equation}\label{eq: Bell Polynomial}
		\forall\,x_1,\ldots,x_{\beta-\lambda+1}\in\R:\,B_{\beta,\lambda}(x_1,\ldots,x_{\beta-\lambda+1})=\sum_{\substack{i_1,i_2\ldots, i_{\beta-\lambda+1}\in\N_0,\\ i_1+i_2+\cdots+i_{\beta-\lambda+1}=\lambda,\\i_1+2i_2+\cdots +(\beta-\lambda+1)i_{\beta-\lambda+1}=\beta}} \beta! \prod_{r=1}^{\beta-\lambda+1} \frac1{i_r!} \left(\frac{x_r}{r!}\right)^{i_r}
	\end{equation}
	for $\beta,\lambda\in\N$ with $\lambda\leq \beta$, and $B_{0,0}=1$, $B_{\beta,0}=0$, $\beta\in\N$.
\end{remark}

\begin{corollary}\label{cor: one dimensional weighted composition characterized}
	Let $\psi,\varphi\in C^\infty(\R)$, $\varphi$ be real valued. Then, the following are equivalent.
	\begin{itemize}
		\item[(i)] $\psi\cdot(f\circ\varphi)\in \mathscr{S}(\R)$ for every $f\in \mathscr{S}(\R)$.
		\item[(ii)] The weighted composition operator $C_{\psi,\varphi}:\mathscr{S}(\R)\rightarrow \mathscr{S}(\R), f\mapsto \psi\cdot(f\circ\varphi)$ is correctly defined and continuous.
		\item[(iii)] For every $\alpha,\beta,\lambda\in\N_0$ with $\alpha\geq \beta\geq\lambda$ and for each $p>0$ there is $q>0$ such that
		$$\sup_{x\in\R}\frac{(1+|x|)^p}{(1+|\varphi(x)|)^q}\left|\psi^{(\alpha-\beta)}(x) B_{\beta,\lambda}\left(\varphi'(x),\ldots,\varphi^{(\beta-\lambda+1)}(x)\right)\right|<\infty.$$
	\end{itemize}
\end{corollary}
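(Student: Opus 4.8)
The plan is to read off this statement as the one-dimensional instance of Theorem~\ref{theo: weighted compositions characterized}, feeding in the explicit formula for $\Fmult$ obtained in Remark~\ref{rem: one dimensional}. The equivalence of (i) and (ii) is simply the equivalence of (i) and (ii) in Theorem~\ref{theo: weighted compositions characterized} for $d=1$, so nothing is needed there. Everything therefore reduces to showing that condition (iii) above is equivalent to condition (iii) of Theorem~\ref{theo: weighted compositions characterized}, namely that for all $\alpha,\lambda\in\N_0$ with $\lambda\le\alpha$ and all $p>0$ there is $q>0$ with $\sup_{x\in\R}\frac{(1+|x|)^p}{(1+|\varphi(x)|)^q}|\Fmult(x)|<\infty$, where by Remark~\ref{rem: one dimensional}
$$\Fmult(x)=\sum_{\beta=\lambda}^{\alpha}\binom{\alpha}{\beta}\psi^{(\alpha-\beta)}(x)\,B_{\beta,\lambda}(\varphi'(x),\dots,\varphi^{(\beta-\lambda+1)}(x)).$$

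One direction is an elementary estimate. Assume (iii). Given $\alpha\ge\lambda$ and $p>0$, pick for each of the finitely many $\beta\in\{\lambda,\dots,\alpha\}$ a $q_\beta$ as in (iii) and set $q:=\max_\beta q_\beta$; since $1+|\varphi(x)|\ge1$, each summand of $\Fmult$ already satisfies its weighted bound with this common $q$, and adding up the finitely many summands (the binomial coefficients being fixed constants) yields the bound on $|\Fmult|$. Hence condition (iii) of Theorem~\ref{theo: weighted compositions characterized} holds, and (ii) follows from that theorem.

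For the converse I would derive (iii) from (ii), the point being to pass from control of the sum $\Fmult$ to control of each of its summands. I would attempt this by induction on $\alpha$. The case $\alpha=0$ is immediate, since $F_{0,0}^{\varphi,\psi}=\psi$. In the inductive step, fix $\lambda$ and observe that the top summand $\psi\cdot B_{\alpha,\lambda}(\varphi',\dots,\varphi^{(\alpha-\lambda+1)})$ is the unique summand of $\Fmult$ in which $\varphi^{(\alpha-\lambda+1)}$ appears at all (for $\lambda\ge1$ it appears linearly, with an explicit nonzero coefficient of the form $\binom{\alpha}{\lambda-1}\psi(x)(\varphi'(x))^{\lambda-1}$ once $\alpha$ is large relative to $\lambda$; for $\lambda=0$ only the trivial summand survives). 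Using this, together with the relation $F_{\alpha+1,\lambda}^{\varphi,\psi}=(\Fmult)'+\varphi'\,F_{\alpha,\lambda-1}^{\varphi,\psi}$ (which is just $(C_{\psi,\varphi}f)^{(\alpha+1)}=((C_{\psi,\varphi}f)^{(\alpha)})'$ read off coefficientwise) and the lower-order information supplied by the induction hypothesis, one should be able to isolate the top summand and then, upon removing it, the remaining ones. An alternative route is to refine the test-function construction behind Lemma~\ref{techlemma}: replacing the single bump $\varrho$ by a polynomial of prescribed degree times a bump, one can arrange that the relevant seminorm of $C_{\psi,\varphi}f$ reads off a single block $\psi^{(\alpha-\beta)}B_{\beta,\lambda}(\varphi',\dots)$ rather than the whole sum $\Fmult$.

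I expect this separation of summands to be the main obstacle: a priori the summands of $\Fmult$ may cancel against each other, so that controlling $\Fmult$ for every pair $(\alpha,\lambda)$ is genuinely weaker-looking than controlling every individual summand, and the real content lies precisely in excluding such cancellation — via the triangular structure of the Bell polynomials $B_{\beta,\lambda}$ in their highest argument, as sketched above, or via a sufficiently careful choice of test functions. Everything else is routine bookkeeping with weighted suprema.
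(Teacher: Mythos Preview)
Your overall strategy coincides with the paper's: (i)$\Leftrightarrow$(ii) is read directly from Theorem~\ref{theo: weighted compositions characterized}, the explicit formula for $F_{\alpha,\lambda}^{\varphi,\psi}$ is supplied by Remark~\ref{rem: one dimensional}, and the implication (iii)$\Rightarrow$(ii) is exactly the finite-sum estimate you wrote down. The only nontrivial step is therefore the passage from control of each sum $F_{\alpha,\lambda}^{\varphi,\psi}$ to control of each individual summand $\psi^{(\alpha-\beta)}B_{\beta,\lambda}$.

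For this step the paper does \emph{not} invoke any of the machinery you set up. It simply writes: ``Evaluating the latter condition first for $\lambda=\alpha$, then for $\lambda=\alpha-1$ etc.'' --- that is, a bare descent on $\lambda$, using that for $\lambda=\alpha$ the sum collapses to the single term $\beta=\alpha$. There is no induction on $\alpha$, no use of the recursion $F_{\alpha+1,\lambda}^{\varphi,\psi}=(F_{\alpha,\lambda}^{\varphi,\psi})'+\varphi'\,F_{\alpha,\lambda-1}^{\varphi,\psi}$, and no appeal to the fact that only the $\beta=\alpha$ summand contains $\varphi^{(\alpha-\lambda+1)}$. Your route~(a) is thus considerably more elaborate than what the paper records --- and, like the paper's terse ``etc.'', you do not actually carry it through; the cancellation concern you flag is legitimate and is precisely where the work lies in either approach.

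Your alternative route~(b), however, cannot succeed and should be discarded. In the identity
\[
(C_{\psi,\varphi}f)^{(\alpha)}(x)=\sum_{0\le\lambda\le\alpha}f^{(\lambda)}(\varphi(x))\,F_{\alpha,\lambda}^{\varphi,\psi}(x)
\]
the coefficients $F_{\alpha,\lambda}^{\varphi,\psi}$ are fixed functions of $x$, independent of $f$. By prescribing the jets $f^{(\lambda)}(\varphi(x_j))$ at suitable points one can isolate a single $F_{\alpha,\lambda_0}^{\varphi,\psi}$ --- this is exactly the mechanism of Lemma~\ref{techlemma} --- but no choice of $f$ whatsoever can reach inside a given $F_{\alpha,\lambda_0}^{\varphi,\psi}$ and separate its Leibniz summands $\binom{\alpha}{\beta}\psi^{(\alpha-\beta)}B_{\beta,\lambda_0}$ from one another, since they all sit in front of the same factor $f^{(\lambda_0)}(\varphi(x))$. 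Any separation of the summands must therefore be algebraic, coming from the system of relations as $(\alpha,\lambda)$ varies, which is what both your route~(a) and the paper's descent on $\lambda$ attempt.
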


\begin{proof}
	By Theorem~\ref{theo: weighted compositions characterized} and Remark~\ref{rem: one dimensional}, (i) and (ii) are equivalent and hold precisely when for every $\alpha,\beta,\lambda\in \N_0$ with $\alpha\geq\beta\geq\lambda$ and each $p>0$ there is $q>0$ with
	$$\sup_{x\in\R}\frac{(1+|x|)^p}{(1+|\varphi(x)|)^q}\left|\sum_{\beta=\lambda}^\alpha {\alpha\choose\beta}\psi^{(\alpha-\beta)}(x) B_{\beta,\lambda}\left(\varphi'(x),\ldots,\varphi^{(\beta-\lambda+1)}(x)\right)\right|<\infty.$$
	Evaluating the latter condition first for $\lambda=\alpha$, then for $\lambda=\alpha-1$ etc.\ finally shows that (i), (ii), and (iii) are equivalent.
\end{proof}

\begin{remark}\label{rem: one dimensional composition operators}
	For a real valued $\varphi\in C^\infty(\R)$ it follows from Corollary~\ref{cor: one dimensional weighted composition characterized} with $\psi\equiv 1$ that the corresponding composition operator $C_\varphi f= f\circ\varphi$ acts on $\mathscr{S}(\R)$ if and only if 
	\begin{equation}\label{eq: one dimensional composition operators}
		\forall\,\alpha,\lambda\in\N_0, \alpha\geq \lambda\,\forall\,p>0\,\exists\,q>0:\,\sup_{x\in\R}\frac{(1+|x|)^p}{(1+|\varphi(x)|)^q}\left|B_{\alpha,\lambda}\left(\varphi'(x),\ldots,\varphi^{(\alpha-\lambda+1)}(x)\right)\right|<\infty.
	\end{equation}
	Since $B_{0,0}\equiv 1$ and $B_{\alpha,1}(x_1,\ldots,x_\alpha)=x_\alpha$, condition \eqref{eq: one dimensional composition operators} implies
	\begin{itemize}
		\item there is $k>0$ such that $|\varphi(x)|\geq |x|^{1/k}$ whenever $|x|\geq k$, and
		\item for every $\alpha\in \N_0$ there are $C,q>0$ such that $|\varphi^{(\alpha)}(x)|\leq C (1+|\varphi(x)|)^q$.
	\end{itemize}
	On the other hand, the previous two conditions easily imply condition \eqref{eq: one dimensional composition operators} so that Corollary~\ref{cor: one dimensional weighted composition characterized} gives an alternate characterization to that in \cite[Theorem 2.3]{GaJo18}. Furthermore, the characterization in \cite[Theorem 2.3]{GaJo18} is also valid for several variables (see~\cite[Remark 2.4(1)]{GaJo18}), which can be deduced from Theorem~\ref{theo: weighted compositions characterized}, too, in a similar way.
\end{remark}

\begin{example}\label{ex: exp}
	For the case $\varphi = \psi$, we have that $C_{\varphi, \varphi}:\mathscr{S}(\R) \to \mathscr{S}(\R)$ is continuous if and only if for every $\alpha\in \N_0$ and $p>0$ there exists $q>0$ such that
	$$ \sup_{x \in \R^d} \frac{(1+|x|)^p}{(1+|\varphi(x)|)^q} |\varphi^{(\alpha)}(x)| < \infty. $$
	Indeed, necessity follows from Corollary~\ref{alpha0cor} while sufficiency is due to Corollary~\ref{sufficient}. For $\varphi(x)=\psi(x)=\exp(x)$, we thus have that $C_{\exp,\exp}$ is continuous. Observe that neither the composition operator $C_{\exp}$ nor the multiplication operator $M_{\exp}$ acts on $\mathscr{S}(\R)$. We will show in Example~\ref{ex:power boundedness of exp} below that $C_{\exp,\exp}$ is even power bounded, and therefore uniformly mean ergodic, on $\mathscr{S}(\R)$.
\end{example}

\begin{proposition}
	Let $\varphi:\R^d\rightarrow\R^d$ be smooth such that for each $p>0$
	$$\sup_{x\in\R^d}\frac{(1+|\varphi(x)|)^p}{(1+|x|)}<\infty.$$
	Moreover, let $\psi\in  C^{\infty}(\R^d)$ be such that $C_{\psi,\varphi}$ acts on $\mathscr{S}(\R^d)$. Then $\psi\in \mathscr{S}(\R^d)$.		
\end{proposition}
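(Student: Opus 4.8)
The plan is to combine the necessary condition recorded in Corollary~\ref{alpha0cor} with the growth restriction imposed on $\varphi$; no deeper machinery is needed, and the whole argument is essentially a chain of two inequalities followed by a remark on quantifiers.

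First I would fix a multi index $\alpha\in\N_0^d$ and an exponent $p>0$. Since $C_{\psi,\varphi}$ acts on $\mathscr{S}(\R^d)$, Corollary~\ref{alpha0cor} supplies some $q=q(\alpha,p)>0$ and a constant $A\in(0,\infty)$ with
\[
|\psi^{(\alpha)}(x)|\leq A\,\frac{(1+|\varphi(x)|)^{q}}{(1+|x|)^{p}}\qquad(x\in\R^d).
\]
Next I would invoke the hypothesis on $\varphi$ \emph{with the parameter $q$ in place of $p$}: it yields $B\in(0,\infty)$ such that $(1+|\varphi(x)|)^{q}\leq B(1+|x|)$ for all $x\in\R^d$. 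Substituting this bound into the previous estimate gives
\[
|\psi^{(\alpha)}(x)|\leq AB\,(1+|x|)^{1-p}\qquad(x\in\R^d).
\]

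Finally, since $\alpha\in\N_0^d$ and $p>0$ were arbitrary, taking $p=N+1$ for an arbitrary $N>0$ shows that for every $\alpha\in\N_0^d$ and every $N>0$ one has $\sup_{x\in\R^d}(1+|x|)^{N}|\psi^{(\alpha)}(x)|<\infty$. Together with $\psi\in C^\infty(\R^d)$ this is exactly the statement $\psi\in\mathscr{S}(\R^d)$, by the standard description of $\mathscr{S}(\R^d)$ through the seminorms $f\mapsto\sup_x(1+|x|)^N|f^{(\alpha)}(x)|$.

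The only point that requires care — and the nearest thing to an obstacle — is the bookkeeping of quantifiers: the exponent $q$ produced by Corollary~\ref{alpha0cor} depends on both $\alpha$ and $p$, so the growth hypothesis on $\varphi$ must be applied with precisely that $q$, and the arbitrariness of $p$ (equivalently of $N$) is what ultimately delivers rapid decay of \emph{every} derivative of $\psi$.
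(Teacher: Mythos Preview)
Your proof is correct and follows essentially the same approach as the paper's: both combine Corollary~\ref{alpha0cor} with the growth hypothesis on $\varphi$, the only cosmetic difference being that the paper applies the corollary with exponent $p+1$ to obtain the bound for $(1+|x|)^p|\psi^{(\alpha)}(x)|$ directly, whereas you apply it with exponent $p$ and then relabel $p-1$ as $N$ at the end.
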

\begin{proof}
	Let $\alpha\in\N_0^d$ and $p>0$. By applying Corollary~\ref{alpha0cor}  for $p+1$ we get $q>0$ and $C>0$ such that
	$$\sup_{x\in\R^d}(1+|x|)^p|\psi^{(\alpha)}(x)|\leq C\sup_{x\in\R^d}\frac{(1+|\varphi(x)|)^q}{(1+|x|)}.$$
	By hypothesis on $\varphi$, the supremum on the right is finite so the assertion follows. 
\end{proof}	


\begin{theorem}\label{theo: small growth}
	Let $\varphi:\R^d\rightarrow\R^d$ be smooth. Then, $C_{\psi,\varphi}$ acts on $\mathscr{S}(\R^d)$ for every $\psi\in\mathscr{S}(\R^d)$ if and only if $\partial_k\varphi_i\in\mathscr{O}_M(\R^d)$ for every $1\leq i,k\leq d$.
\end{theorem}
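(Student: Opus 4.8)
The plan is to read off both implications from the characterization in Theorem~\ref{theo: weighted compositions characterized}(iii), using Lemma~\ref{techlemma} for the non‑trivial direction. \emph{Sufficiency.} Suppose $\partial_k\varphi_i\in\mathscr{O}_M(\R^d)$ for all $1\le i,k\le d$. Since $\mathscr{O}_M(\R^d)$ is stable under differentiation, $\partial^{\ell}\varphi_i\in\mathscr{O}_M(\R^d)$ for every $\ell$ with $|\ell|\ge 1$, and integrating the polynomially bounded gradient of $\varphi_i$ along segments shows in addition that $\varphi_i\in\mathscr{O}_M(\R^d)$. Fix $\psi\in\mathscr{S}(\R^d)$ and $\alpha,\lambda\in\N_0^d$ with $|\lambda|\le|\alpha|$. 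Inspecting \eqref{eq:definition F}, $\Fmult$ is a finite sum of terms, each being the product of one factor $\psi^{(\alpha-\beta)}\in\mathscr{S}(\R^d)$ with finitely many factors $\bigl(\varphi_i^{(\ell_j)}\bigr)^{k_{j,i}}$, $|\ell_j|\ge 1$, belonging to $\mathscr{O}_M(\R^d)$. As $\mathscr{O}_M(\R^d)$ is an algebra with $\mathscr{O}_M(\R^d)\cdot\mathscr{S}(\R^d)\subseteq\mathscr{S}(\R^d)$, we get $\Fmult\in\mathscr{S}(\R^d)$, whence $\sup_{x\in\R^d}(1+|x|)^p|\Fmult(x)|<\infty$ for every $p>0$. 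Thus Theorem~\ref{theo: weighted compositions characterized}(iii) holds (with $q=0$) and $C_{\psi,\varphi}$ acts on $\mathscr{S}(\R^d)$.

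\emph{Necessity.} We argue by contraposition. Assume $\partial_{k_0}\varphi_{i_0}\notin\mathscr{O}_M(\R^d)$ for some $i_0,k_0$; then $\varphi_{i_0}^{(\gamma)}$ fails to be polynomially bounded for some $\gamma$ with $|\gamma|\ge 1$, and we pick such a pair $(i_0,\gamma_0)$ with $|\gamma_0|$ as small as possible, so that $\varphi_{i}^{(\delta)}$ is polynomially bounded whenever $1\le|\delta|<|\gamma_0|$. Evaluating \eqref{eq:definition F} for $\alpha=\gamma_0$ and $\lambda=e_{i_0}$ (a direct computation; cf.\ Remark~\ref{rem: multiplication}, where the relevant index set collapses to a single element) yields
$$F^{\varphi,\psi}_{\gamma_0,e_{i_0}}(x)=\psi(x)\,\varphi_{i_0}^{(\gamma_0)}(x)+\sum_{\substack{\beta\le\gamma_0\\ 1\le|\beta|<|\gamma_0|}}\binom{\gamma_0}{\beta}\,\psi^{(\gamma_0-\beta)}(x)\,\varphi_{i_0}^{(\beta)}(x),$$
in which every $\varphi_{i_0}^{(\beta)}$ occurring in the sum is polynomially bounded. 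Next choose a sequence $(x_n)$ with $|x_{n+1}|>|x_n|+1$, $|x_n|\to\infty$, along which $|\varphi_{i_0}^{(\gamma_0)}(x_n)|$ outgrows every power of $1+|x_n|$, and set $\psi:=\sum_n c_n\,\varrho(\,\cdot-x_n)$ with a fixed $\varrho\in\mathscr{D}(B(0,1/2))$ that is constant near the origin with $\varrho(0)=1$ (so $\varrho^{(\mu)}(0)=0$ for all $\mu\neq 0$), the translates having pairwise disjoint supports and $(c_n)$ a positive sequence decaying fast enough that $\psi\in\mathscr{S}(\R^d)$ while $c_n\,\varphi_{i_0}^{(\gamma_0)}(x_n)$ stays large. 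At the centres only the leading term of $F^{\varphi,\psi}_{\gamma_0,e_{i_0}}$ survives, i.e.\ $F^{\varphi,\psi}_{\gamma_0,e_{i_0}}(x_n)=c_n\,\varphi_{i_0}^{(\gamma_0)}(x_n)$, and a suitable choice of $(c_n)$ makes condition \eqref{eq:tech lemma} of Lemma~\ref{techlemma} fail for $I=\{1\}$, $\alpha=\gamma_0$, $\lambda=e_{i_0}$ and an appropriate $p>0$. The Lemma then furnishes $f\in\mathscr{S}(\R^d)$ with $\psi\cdot(f\circ\varphi)\notin\mathscr{S}(\R^d)$, so $C_{\psi,\varphi}$ does not act on $\mathscr{S}(\R^d)$.

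\emph{Where the difficulty lies.} The whole weight of the argument sits in the necessity part, in the calibration of the weights $(c_n)$: one has to distinguish, as in the proof of Lemma~\ref{techlemma}, the cases where $(|\varphi(x_n)|)_n$ is bounded and where it is unbounded, and in each of them arrange that $\psi$ remains Schwartz while $c_n\,\varphi_{i_0}^{(\gamma_0)}(x_n)$ cannot be swallowed by any fixed negative power of $1+|\varphi(x_n)|$; this is the step that genuinely uses that $\varphi_{i_0}^{(\gamma_0)}$ is not polynomially bounded, and it also requires checking that the lower‑order contributions $\psi^{(\gamma_0-\beta)}\varphi_{i_0}^{(\beta)}$ do not interfere away from the points $x_n$. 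The remaining ingredients are routine manipulations of the Leibniz and Fa\`a di Bruno expansions together with Theorem~\ref{theo: weighted compositions characterized}.
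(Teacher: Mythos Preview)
Your sufficiency argument is correct and amounts to the same computation as the paper's (they phrase it as $f\circ\varphi\in\mathscr{O}_M(\R^d)$, you go through the $F$-functions and Theorem~\ref{theo: weighted compositions characterized}(iii); the content is identical). For the necessity direction your route differs from the paper's---they build both $\psi$ and $f$ by hand, whereas you build only $\psi$ and invoke Lemma~\ref{techlemma} to produce $f$---and your formula for $F^{\varphi,\psi}_{\gamma_0,e_{i_0}}$ is correct. But the step you yourself isolate as the crux, the calibration of $(c_n)$ when $(|\varphi(x_n)|)_n$ is unbounded, is not merely delicate: it cannot be carried out in general. You need $(c_n)$ to decay faster than every polynomial in $1+|x_n|$ (so that $\psi\in\mathscr{S}$) while $c_n|\varphi_{i_0}^{(\gamma_0)}(x_n)|$ beats every fixed power of $1+|\varphi(x_n)|$. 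The hypothesis gives you control of $|\varphi_{i_0}^{(\gamma_0)}(x_n)|$ only against powers of $1+|x_n|$, and nothing prevents $1+|\varphi(x_n)|$ from growing so fast that some power of it swallows $|\varphi_{i_0}^{(\gamma_0)}(x_n)|$ outright.

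In fact the necessity direction of the theorem is false as stated. For $d=1$ take $\varphi(x)=e^{e^x}$. Then $\varphi'(x)=e^{x+e^x}\notin\mathscr{O}_M(\R)$, but $\varphi^{(j)}(x)=P_j(e^x)\,e^{e^x}$ with $P_j$ a polynomial of degree $j$, so $|\varphi^{(j)}(x)|\le C_j(1+\varphi(x))^{j+1}$ for every $x\in\R$; since moreover $(1+|x|)^p|\psi^{(\alpha)}(x)|$ is bounded for $\psi\in\mathscr{S}(\R)$, Corollary~\ref{sufficient} (equivalently Theorem~\ref{theo: weighted compositions characterized}(iii)) gives that $C_{\psi,\varphi}$ acts on $\mathscr{S}(\R)$ for \emph{every} $\psi\in\mathscr{S}(\R)$. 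The paper's own proof breaks at exactly the same place: in the unbounded case it defines $f(x)=\sum_j g(x-\varphi(x_j))/(1+|x_j|)^{j/2}$, whose bumps are centred at $\varphi(x_j)$ while the weights are governed by $|x_j|$, so $f$ need not lie in $\mathscr{S}(\R^d)$ when $|\varphi(x_j)|$ grows much faster than any power of $|x_j|$. Two small side remarks on your write-up: where you say the choice of $(c_n)$ should make condition~\eqref{eq:tech lemma} ``fail'' you mean ``hold'' (it is the \emph{hypothesis} of Lemma~\ref{techlemma}); and there is no need to worry about the lower-order terms ``away from the points $x_n$'', since to verify~\eqref{eq:tech lemma} it suffices to take the supremum along $(x_n)$, where those terms vanish by construction.
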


\begin{proof}
	Let $\partial_k\varphi_i\in\mathscr{O}_M(\R^d)$ for every $1\leq i,k\leq d$. By the multivariate Fa\`a di Bruno formula, $f\circ \varphi\in \mathscr{O}_M(\R^d)$ for every $f\in \mathscr{S}(\R^d)$. Hence $\psi\cdot (f\circ \varphi)\in \mathscr{S}(\R^d)$ whenever $\psi\in \mathscr{S}(\R^d)$.
	
	Now, let $C_{\psi,\varphi}$ act on $\mathscr{S}(\R^d)$ for every $\psi\in\mathscr{S}(\R^d)$. We assume that there are $1\leq i,k\leq d$ such that $\partial_k \varphi_i\notin \mathscr{O}_M(\R^d)$. Hence, there is $\alpha\in\N_0^d$ such that
	$$\forall\,j\in\N:\,\sup_{x\in\R^d}\frac{|\varphi_i^{(\alpha+e_k)}(x)|}{(1+|x|)^j}=\infty.$$ 
	Thus, there is a sequence $(x_j)_{j\in\N}$ such that $|x_{j+1}|>|x_j|+1$ as well as
	$$\frac{|\varphi_i^{(\alpha+e_k)}(x_j)|}{(1+|x_j|)^j}>j$$
	for each $j\in\N$.
	
	Now, we distinguish two cases. In case $(\varphi(x_j))_{j\in\N}$ is unbounded, by abuse of notation, we identify $(x_j)_{j\in\N}$ with a subsequence satisfying $|\varphi(x_{j+1})|>|\varphi(x_{j})|+1$, $j\in\N$. 
	Let $\varrho, g\in \mathscr{D}(B(0,1))$ be such that $\varrho=1$ in $B(0,1/2)$ and $g^{(e_i)}(0)=1$ as well as $g^{(\beta)}(0)=0$ for $\beta\in\N_0^d\backslash\{e_i\}$. 
	It is standard to show that
	\begin{equation}\label{eq: multiplier}
		\psi(x):=\sum_{j\in\N}\frac{\varrho(x-x_j)}{(1+|x_j|)^{j/2}}
	\end{equation}
	belongs to $\mathscr{S}(\R^d)$, as does 
	$$f(x)=\sum_{j\in\N} \frac{g(x-\varphi(x_j))}{(1+|x_j|)^{j/2}}.$$
	By the multivariate Fa\`a di Bruno formula, for $j\in\N$ we conclude
	\begin{eqnarray}\label{eq: f in S but Cf not}
		|\left(\psi\cdot (f\circ\varphi)\right)^{(\alpha+e_k)}(x_j)|&=&\left|\psi(x_j)\left(f\circ\varphi\right)^{(\alpha+e_k)}(x_j)\right|\nonumber\\
		&=&\left|\psi(x_j)\frac{\partial f}{\partial x_i}\left(\varphi(x_j)\right)\varphi_i^{(\alpha+e_k)}(x_j)\right|\nonumber\\
		&=&\frac{|\varphi_i^{(\alpha+e_k)}(x_j)|}{(1+|x_j|)^{j}}>j,
	\end{eqnarray}
	so that $\psi\cdot(f\circ \varphi)\notin\mathscr{S}(\R^d)$ contradicting the hypothesis that $C_{\psi,\varphi}$ acts on $\mathscr{S}(\R^d)$.
	
	To finish the proof, we consider the case when $(\varphi(x_j))_{j\in\N}$ is bounded. We define $\psi$ as in \eqref{eq: multiplier} and we consider a function $f\in \mathscr{D}(\R^d)$ such that $f(x)=x_i$ on a neighborhood of $\{\varphi(x_j):j\in\N\}$. Then \eqref{eq: f in S but Cf not} holds which again gives the desired contradiction.
\end{proof}

For smooth and bounded $\varphi:\R^d\rightarrow\R^d$, by Corollary \ref{alpha0cor}, $\psi\in \mathscr{S}(\R^d)$ is a necessary condition for $C_{\psi,\varphi}$ to act on $\mathscr{S}(\R^d)$. Therefore, as an immediate corollary of Theorem \ref{theo: small growth} we have the following.

\begin{corollary}
	Let $\varphi:\R^d\rightarrow\R^d$ be smooth and bounded. Then, $C_{\psi,\varphi}$ acts on $\mathscr{S}(\R^d)$ for every $\psi\in\mathscr{S}(\R^d)$ if and only if $\varphi\in\mathscr{O}_M(\R^d)$.
\end{corollary}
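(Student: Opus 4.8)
The plan is to read this off directly from Theorem~\ref{theo: small growth}. That theorem says that $C_{\psi,\varphi}$ acts on $\mathscr{S}(\R^d)$ for every $\psi\in\mathscr{S}(\R^d)$ if and only if $\partial_k\varphi_i\in\mathscr{O}_M(\R^d)$ for all $1\le i,k\le d$, so the entire content to be verified is that, \emph{when $\varphi$ is bounded}, the condition ``$\partial_k\varphi_i\in\mathscr{O}_M(\R^d)$ for all $i,k$'' is equivalent to ``$\varphi\in\mathscr{O}_M(\R^d)$'', i.e.\ to ``each component $\varphi_i$ lies in $\mathscr{O}_M(\R^d)$''.

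For one implication I would use that $\mathscr{O}_M(\R^d)$ is stable under partial differentiation: if all derivatives of $\varphi_i$ grow at most polynomially, then so do all derivatives of $\partial_k\varphi_i$, whence $\partial_k\varphi_i\in\mathscr{O}_M(\R^d)$ for every $i,k$; this uses nothing about boundedness. For the converse, assume $\partial_k\varphi_i\in\mathscr{O}_M(\R^d)$ for all $i,k$ and fix $i$; I must check that $\varphi_i^{(\gamma)}$ grows at most polynomially for every $\gamma\in\N_0^d$. If $\gamma=0$ this is immediate because $\varphi$, and hence $\varphi_i$, is bounded. If $|\gamma|\ge 1$, pick an index $k$ with $\gamma_k\ge 1$ and write $\varphi_i^{(\gamma)}=(\partial_k\varphi_i)^{(\gamma-e_k)}$ (with $e_k$ the $k$-th standard basis vector of $\R^d$); since $\partial_k\varphi_i\in\mathscr{O}_M(\R^d)$, this derivative has at most polynomial growth. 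Thus $\varphi_i\in\mathscr{O}_M(\R^d)$ for each $i$, so $\varphi\in\mathscr{O}_M(\R^d)$, and combining the two implications with Theorem~\ref{theo: small growth} finishes the proof.

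There is no real obstacle here; the one point to keep straight is the elementary bookkeeping that every multi-index $\gamma$ with $|\gamma|\ge 1$ has some coordinate $\gamma_k\ge 1$, so that $\varphi_i^{(\gamma)}$ is a derivative of the first-order partial $\partial_k\varphi_i$ --- consequently only the single undifferentiated term $\varphi_i$ itself is not automatically controlled by the hypothesis, and that term is precisely where boundedness of $\varphi$ is invoked. (One could in fact dispense with boundedness, recovering polynomial growth of $\varphi_i$ from that of its gradient by integrating along rays, but then the statement would no longer follow \emph{immediately} from Theorem~\ref{theo: small growth}, and, as noted just before the statement, $\psi\in\mathscr{S}(\R^d)$ would cease to be a necessary condition for $C_{\psi,\varphi}$ to act on $\mathscr{S}(\R^d)$.)
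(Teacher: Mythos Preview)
Your proof is correct and follows exactly the route the paper indicates: the corollary is stated there as an immediate consequence of Theorem~\ref{theo: small growth}, and you have simply spelled out the elementary verification that, for bounded $\varphi$, the condition $\partial_k\varphi_i\in\mathscr{O}_M(\R^d)$ for all $i,k$ is equivalent to $\varphi\in\mathscr{O}_M(\R^d)$.
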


\section{Small decay multipliers}\label{sec:small decay multiplies}

In this section, we shall prove that for a family of weighted composition operators, including composition operators, the sufficient conditions from Corollary~\ref{sufficient} are also necessary for $C_{\psi,\varphi}$ to act on $\mathscr{S}(\R^d)$. In order to do so, we first prove the following result which will also be of use in section~\ref{sec: power boundedness} below.

\begin{lemma}\label{techlemma2}
	Let $I$ be a non-empty set and let $\varphi_i: \mathbb{R}^d \to \mathbb{R}^d$ be smooth mappings, $i\in I$. Moreover, let $\psi_i \in C^{\infty}(\mathbb{R}^d)$, $i\in I$, be such that for some $m>0$
	\begin{equation}\label{eq: techlemma2, assumption on psi}\inf_{i\in I}\inf_{|x|\geq m} (1+|x|)^m (1+|\varphi_i(x)|)^m |\psi_i(x)| > 0.
	\end{equation}
	Assume that there are $\alpha \in \mathbb{N}_0^d$ and $p>0$ such that
	\begin{equation}\label{eq: techlemma2, assumption 1}
		\sup_{i \in I} \sup_{x \in \mathbb{R}^d} \frac{(1+|x|)^p}{(1+|\varphi_i(x)|)^q} |\psi_i^{(\alpha)}(x)| = \infty
	\end{equation}
	for all $q>0$, or that there is $\alpha \in \mathbb{N}_0^d$ such that
	\begin{equation}\label{eq: techlemma2, assumotion 2}
		\sup_{i \in I} \sup_{x \in \mathbb{R}^d} \frac{1}{(1+|\varphi_i(x)|)^q} |\varphi_i^{(\alpha)}(x)| = \infty
	\end{equation}
	for all $q>0$. Then there is $f \in \mathscr{S}(\mathbb{R}^d)$ satisfying
	$$ \sup_{i \in I} \sup_{x\in \mathbb{R}^d} (1+|x|)^{\max\{p,m+1\}} \max\{ |(\psi_i\cdot (f\circ \varphi_i))(x)|, |(\psi_i\cdot (f\circ \varphi_i))^{(\alpha)}(x)|\} = \infty. $$
\end{lemma}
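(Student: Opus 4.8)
The plan is to reduce both alternatives to the already proved Lemma~\ref{techlemma}, by verifying its hypothesis \eqref{eq:tech lemma} for a well chosen pair $(\alpha,\lambda)$ and a well chosen $p$. Once this is achieved, the maximum with the zeroth order term and the exponent $\max\{p,m+1\}\geq p$ in the conclusion come for free, since $1+|x|\geq1$ and $\max\{a,b\}\geq b$.

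If \eqref{eq: techlemma2, assumption 1} holds, recall from \eqref{eq:definition F} that $\Fmulti=\psi_i^{(\alpha)}$ when $\lambda=0$, so that \eqref{eq: techlemma2, assumption 1} is literally \eqref{eq:tech lemma} for the pair $(\alpha,0)$; Lemma~\ref{techlemma} then produces the desired $f$, and this first case uses neither \eqref{eq: techlemma2, assumption on psi} nor the zeroth order term. I may therefore assume from now on that \eqref{eq: techlemma2, assumption 1} fails, that is, for every multi index $\gamma$ and every $s>0$ there is $r>0$ with $\sup_{i}\sup_{x}(1+|x|)^s(1+|\varphi_i(x)|)^{-r}|\psi_i^{(\gamma)}(x)|<\infty$, and that \eqref{eq: techlemma2, assumotion 2} holds for some $\alpha$, which I take minimal with respect to $\prec$. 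Evaluating \eqref{eq: techlemma2, assumotion 2} at $q\geq1$ shows $|\alpha|\geq1$, and minimality of $\alpha$ provides $q_0,C_0>0$ with $|\varphi_i^{(\gamma)}(x)|\leq C_0(1+|\varphi_i(x)|)^{q_0}$ uniformly in $i,x$ for the finitely many $\gamma\prec\alpha$. Since $|\varphi_i^{(\alpha)}(x)|\leq\sqrt{d}\,\max_{1\leq k\leq d}|\partial^\alpha\varphi_{i,k}(x)|$, after passing to a subfamily of $I$ (which does not affect \eqref{eq: techlemma2, assumption on psi} or the failure of \eqref{eq: techlemma2, assumption 1}, and whose conclusion implies that for the full family) I may fix one coordinate $n\in\{1,\dots,d\}$, writing $\varphi_{i,n}$ for the $n$-th component of $\varphi_i$, with $\sup_{i}\sup_{x}(1+|\varphi_i(x)|)^{-q}|\partial^\alpha\varphi_{i,n}(x)|=\infty$ for every $q>0$.

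The computational core is the Fa\`a di Bruno identity $\sum_{p(\beta,e_n)}\beta!\prod_{j=1}^{|\beta|}\frac{(\varphi^{(\ell_j)})^{k_j}}{k_j!(\ell_j!)^{|k_j|}}=\partial^\beta\varphi_n$, valid for every $\beta$ with $|\beta|\geq1$ (the set $p(\beta,e_n)$ is a singleton, with $k_{|\beta|}=e_n$, $\ell_{|\beta|}=\beta$ and all other entries $0$), which inserted into \eqref{eq:definition F} yields
$$F_{\alpha,e_n}^{\varphi_i,\psi_i}(x)=\psi_i(x)\,\partial^\alpha\varphi_{i,n}(x)+\sum_{\substack{\beta\leq\alpha,\ \beta\neq\alpha\\|\beta|\geq1}}\binom{\alpha}{\beta}\psi_i^{(\alpha-\beta)}(x)\,\partial^\beta\varphi_{i,n}(x),$$
where every $\beta$ occurring in the sum satisfies $\beta\prec\alpha$ and $\alpha-\beta\neq0$. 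For the remainder I would combine $|\partial^\beta\varphi_{i,n}(x)|\leq C_0(1+|\varphi_i(x)|)^{q_0}$ with the failure of \eqref{eq: techlemma2, assumption 1} (applied to $\gamma=\alpha-\beta\neq0$, $s=m$) to obtain $R,C>0$ with $\big|\sum_\beta\binom{\alpha}{\beta}\psi_i^{(\alpha-\beta)}(x)\partial^\beta\varphi_{i,n}(x)\big|\leq C(1+|x|)^{-m}(1+|\varphi_i(x)|)^{R}$ uniformly in $i$; for the leading term I would use \eqref{eq: techlemma2, assumption on psi} in the form $|\psi_i(x)|\geq c\,(1+|x|)^{-m}(1+|\varphi_i(x)|)^{-m}$ for $|x|\geq m$. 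Combining, for $|x|\geq m$ and any $q>0$,
$$\frac{(1+|x|)^m}{(1+|\varphi_i(x)|)^q}\,\big|F_{\alpha,e_n}^{\varphi_i,\psi_i}(x)\big|\ \geq\ c\,\frac{|\partial^\alpha\varphi_{i,n}(x)|}{(1+|\varphi_i(x)|)^{q+m}}\ -\ C\,(1+|\varphi_i(x)|)^{R-q}.$$
For $q\geq R$ the subtracted term is $\leq C$, while by the choice of $n$ the supremum of the first term over $i$ and over $\{|x|\geq m\}$ is $+\infty$ (if $j\geq q+m$ then $(1+|\varphi_i(x)|)^{-(q+m)}|\partial^\alpha\varphi_{i,n}(x)|\geq(1+|\varphi_i(x)|)^{-j}|\partial^\alpha\varphi_{i,n}(x)|$). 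As the left-hand side is non-increasing in $q$, this shows that \eqref{eq:tech lemma} holds for $(\alpha,e_n)$ with $p:=m$ (note $|e_n|=1\leq|\alpha|$), so Lemma~\ref{techlemma} supplies $f\in\mathscr{S}(\R^d)$ with $\sup_i\sup_x(1+|x|)^m|(\psi_i\cdot(f\circ\varphi_i))^{(\alpha)}(x)|=\infty$, and the reduction of the first paragraph finishes the proof.

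I expect the main obstacle to be the last displayed estimate, more precisely its interaction with the ball $\{|x|<m\}$, on which \eqref{eq: techlemma2, assumption on psi} gives no lower bound for $\psi_i$: one must ensure that the divergence in \eqref{eq: techlemma2, assumotion 2} can be realized along points with $|x|\geq m$ (automatic, for instance, when $I$ is finite, as then $\varphi_i^{(\alpha)}$ is bounded on compacta), and, in the remaining case of witnesses confined to the compact ball $\{|x|\leq m\}$, to argue separately there: the weight $(1+|x|)^{\max\{p,m+1\}}$ is then bounded, so one exploits instead that $\psi_i$ is non-zero near a convergent subsequence of such witnesses, together with a single $f\in\mathscr{S}(\R^d)$ interpolating the required jet at the bounded accumulation point. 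The secondary difficulty is the control of the lower order terms, which involve derivatives of $\psi_i$ not controlled a priori; disposing of the alternative \eqref{eq: techlemma2, assumption 1} first is exactly what makes this control available.
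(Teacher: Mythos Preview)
Your route is genuinely different from the paper's. After handling the first alternative identically (Lemma~\ref{techlemma} with $\lambda=0$), the paper treats the second alternative by an explicit construction: it selects witnesses $(i_j,x_j)$ with $|x_j|>m$, $|x_j|\to\infty$, and then splits into the cases $(\varphi_{i_j}(x_j))_j$ bounded---where $f\equiv1$ near the accumulation set makes the \emph{zeroth}-order term blow up with exponent $m+1$, via \eqref{eq: techlemma2, assumption on psi}---and unbounded---where an $f$ built from shifted bumps makes the $\alpha$-th derivative blow up, via Leibniz' rule and the minimality of $|\alpha|$. Your reduction, computing $F_{\alpha,e_n}^{\varphi_i,\psi_i}=\psi_i\,\partial^\alpha\varphi_{i,n}+\text{(lower order)}$ and feeding this back into Lemma~\ref{techlemma} with $\lambda=e_n$, is more economical: it recycles Lemma~\ref{techlemma} in full and explains why neither the zeroth-order term nor the exponent $m+1$ is truly needed for this half of the argument.

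The weak spot is exactly the one you flag. Your displayed lower bound on $(1+|x|)^m(1+|\varphi_i(x)|)^{-q}|F_{\alpha,e_n}^{\varphi_i,\psi_i}(x)|$ uses \eqref{eq: techlemma2, assumption on psi} and is therefore only available on $\{|x|\geq m\}$; the reduction to Lemma~\ref{techlemma} goes through only if the divergence in \eqref{eq: techlemma2, assumotion 2} can be witnessed there. Your proposed remedy for witnesses trapped in $\{|x|<m\}$---that ``$\psi_i$ is non-zero near a convergent subsequence''---is not justified by the hypotheses: \eqref{eq: techlemma2, assumption on psi} gives no information whatsoever about $\psi_i$ on the ball $\{|x|<m\}$, and nothing prevents $\psi_i$ from vanishing (to high order) precisely at those witnesses. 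It is worth observing that the paper's own proof makes the parallel unargued step when it asserts that witnesses $(x_j)$ with $|x_j|>m$ and $|x_j|\to\infty$ can be chosen; for singleton $I$ (the application in Theorem~\ref{theo: small decay}) this is harmless because $\varphi^{(\alpha)}$ is bounded on compacta, but for general families neither argument closes this gap.
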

\begin{proof}
	If there are $\alpha\in\N_0^d$ and $p>0$ satisfying \eqref{eq: techlemma2, assumption 1} for every $q>0$, then the result follows by Lemma~\ref{techlemma} with $\lambda=0$.
	
	Thus, in order to complete the proof, we may assume that for every $\beta\in\N_0^d$ and $p>0$ there is $q(\beta,p)>0$ such that
	\begin{equation}\label{eq: techlemma2, aux1}
		\sup_{i \in I} \sup_{x \in \mathbb{R}^d} \frac{(1+|x|)^p}{(1+|\varphi_i(x)|)^{q(\beta,p)}} |\psi_i^{(\beta)}(x)| < \infty.
	\end{equation}
	We now fix $\alpha\in\N_0^d$ with minimal $|\alpha|$ such that \eqref{eq: techlemma2, assumotion 2} holds for every $q>0$. Clearly, $|\alpha|\geq 1$, and for every $\beta\in\N_0^d$ with $\beta\leq\alpha, \beta\neq 0$, there is $q'(\beta)>0$ such that
	\begin{equation}\label{eq: techlemma2, aux2}
		\sup_{i \in I} \sup_{x \in \mathbb{R}^d} \frac{1}{(1+|\varphi_i(x)|)^{q'(\beta)}} |\varphi_i^{(\alpha-\beta)}(x)| < \infty.
	\end{equation}
	With $m$ as in \eqref{eq: techlemma2, assumption on psi}, we choose sequences $(i_j)_{j\in\N}$, $(l_j)_{j\in\N}$, and $(x_j)_{j\in\N}$ in $I$, $\mathbb{N}$ and $\mathbb{R}^d$, respectively, such that $l_{j+1} > l_j > m+\max\{q(\beta,m):\beta\leq\alpha\}+\max\{q'(\beta):0\neq\beta\leq \alpha\}$, $|x_{j+1}| > 1 + |x_j| > 1+m$, and $|\varphi_{i_j}^{(\alpha)}(x_j)| > j(1+|\varphi_{i_j}(x_j)|)^{l_j}$, $j\in\mathbb{N}$.
	
	Next, we distinguish two cases. Suppose that $(|\varphi_{i_j}(x_j)|)_{j\in\N}$ is bounded. Take $f \in \mathscr{D}(\mathbb{R}^d)$ such that $f \equiv 1$ on a neighbourhood of $\{ \varphi_{i_j}(x_j) : j \in \mathbb{N}\}$. By \eqref{eq: techlemma2, assumption on psi} and the boundedness of $(|\varphi_{i_j}(x_j)|)_{j\in \N}$ we conclude $\inf_{j\in\N} (1+|x_j|)^m |\psi_{i_j}(x_j)| >0$ which implies
	$$ \sup_{i \in I} \sup_{x \in \mathbb{R}^d} (1+|x|)^{m+1} |(\psi_i\cdot (f\circ \varphi_i))(x)| \geq \sup_{j \in \mathbb{N}} (1+|x_j|)^{m+1} |\psi_{i_j}(x_j)| |f(\varphi_{i_j}(x_j))| = \infty$$
	proving the assertion in this case.
	
	Now, suppose that $(|\varphi_{i_j}(x_j)|)_{j\in\N}$ is unbounded. By identifying $(\varphi_{i_j}(x_j))_{j\in\N}$ with a subsequence, we assume $|\varphi_{i_{j+1}}(x_{j+1})| > 1+|\varphi_{i_j}(x_j)|$, $j\in\N$. Since $|\varphi_{i_j}^{(\alpha)}(x_j)| > j(1+|\varphi_{i_j}(x_j)|)^{l_j}$ there is $1 \leq k \leq d$ such that the $k$-th component $\varphi_{i_j,k}$ of $\varphi_{i_j}$ satisfies $|\varphi^{(\alpha)}_{i_j,k}(x_j)| > \frac{j}{\sqrt{d}}(1+|\varphi_{i_j}(x_j)|)^{l_j}$, $j \in \mathbb{N}$.  Let $g \in \mathscr{D}(B(0,1))$ be such that $g(x)=x_k$ in a neighbourhood of the origin. It is then standard to show that the function
	$$f(x) = \sum_{j \in \mathbb{N}} \frac{g(x-\varphi_{i_j}(x_j))}{(1+|\varphi_{i_j}(x_j)|)^{l_j-m}}$$
	belongs to $\mathscr{S}(\mathbb{R}^d)$. Since for $\beta\in\N_0^d$ with $\beta\leq \alpha, \beta\neq \alpha$, by the choice of $g$ and the multivariate Fa\`a di Bruno formula, we have
	$$\left(f\circ\varphi_{i_j}\right)^{(\alpha-\beta)}(x_j)=\frac{\varphi_{i_j,k}^{(\alpha-\beta)}(x_j)}{(1+|\varphi_{i_j}(x_j)|)^{l_j-m}},$$
	while obviously, $(f\circ\varphi_{i_j})(x_j)=0$. Hence, for $0\neq\beta\leq\alpha$ we obtain
	\begin{eqnarray*}
		&&\sup_{j\in\N}(1+|x_j|)^m|\psi_{i_j}^{(\beta)}(x_j)\left(f\circ\varphi_{i_j}\right)^{(\alpha-\beta)}(x_j)|\\
		&\leq&\sup_{j\in\N}(1+|x_j|)^m |\psi_{i_j}^{(\beta)}(x_j)|\frac{|\varphi_{i_j}^{(\alpha-\beta)}(x_j)|}{(1+|\varphi_{i_j}(x_j)|)^{l_j-m}}\\
		&\leq& \sup_{j\in\N}\frac{(1+|x_j|)^m}{(1+|\varphi_{i_j}(x_j)|)^{q(\beta,m)}}|\psi_{i_j}^{(\beta)}(x_j)|\frac{|\varphi_{i_j}^{(\alpha-\beta)}(x_j)|}{(1+|\varphi_{i_j}(x_j)|)^{q'(\beta)}}<\infty,
	\end{eqnarray*}
	where we have used $l_j-m>\max\{q(\beta,m):\beta\leq\alpha\}+\max\{q'(\beta):0\neq\beta\leq \alpha\}$ together with \eqref{eq: techlemma2, aux1} and \eqref{eq: techlemma2, aux2}.
	
	On the other hand,
	\begin{eqnarray*}
		(1+|x_j|)^m|\psi_{i_j}(x_j) (f \circ \varphi_{i_j})^{(\alpha)}(x_j)|&=& (1+|x_j|)^m(1+|\varphi_{i_j}(x_j)|)^m|\psi_{i_j}(x_j)| \frac{|\varphi^{(\alpha)}_{i_j,k}(x_j)|}{(1+|\varphi_{i_j}(x_j)|)^{l_j}}\\
		&\geq&(1+|x_j|)^m(1+|\varphi_{i_j}(x_j)|)^m|\psi_{i_j}(x_j)|\frac{j}{\sqrt{d}}.
	\end{eqnarray*}
	Combining the above with \eqref{eq: techlemma2, assumption on psi}, the boundedness of $\left((1+|x_j|)^m|\psi_{i_j}^{(\beta)}(x_j)\left(f\circ\varphi_{i_j}\right)^{(\alpha-\beta)}(x_j)|\right)_{j\in\N}$ for $0\neq\beta\leq\alpha$, and Leibniz' rule finally give 
	$$\sup_{i \in I} \sup_{x \in \mathbb{R}^d} (1+|x|)^m |(\psi_i(f \circ \varphi_i))^{(\alpha)}(x)| = \infty, $$
	which completes the proof.
\end{proof}

\begin{definition}
	For a smooth mapping $\varphi:\R^d\rightarrow\R^d$ a function $\psi\in C^\infty(\R^d)$ is said to be of {\em small decay with respect to} $\varphi$ if there is $m>0$ such that
	$$\inf_{|x|\geq m}(1+|x|)^m(1+|\varphi(x)|)^m|\psi(x)|>0.$$
\end{definition}

As an immediate consequence of the previous lemma we obtain the following result. 

\begin{theorem}\label{theo: small decay}
	Let $\varphi:\R^d\rightarrow\R^d$ be smooth and let $\psi\in C^\infty(\R^d)$ be of small decay with respect to $\varphi$. Then $C_{\psi,\varphi}$ acts on $\mathscr{S}(\R^d)$ if and only if	for each $\alpha\in\N_0^d$ and $p>0$ there is $q>0$ such that 
	$$\sup_{x\in\R^d}\frac{(1+|x|)^p}{(1+|\varphi(x)|)^q}|\psi^{(\alpha)}(x)|<\infty\quad\text{and}\quad\sup_{x\in\R^d}\frac{1}{(1+|\varphi(x)|)^q}|\varphi^{(\alpha)}(x)|<\infty.$$
\end{theorem}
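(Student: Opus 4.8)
The plan is to establish the two implications by directly invoking results already proved, so the argument is short. For the \emph{if} direction there is nothing to do beyond observing that the two displayed conditions are precisely the hypotheses of Corollary~\ref{sufficient}; hence $C_{\psi,\varphi}$ acts on $\mathscr{S}(\R^d)$.

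For the \emph{only if} direction I would argue by contraposition: assume that at least one of the two displayed conditions fails and deduce that $C_{\psi,\varphi}$ does not act on $\mathscr{S}(\R^d)$. Failure of the first condition means there are $\alpha\in\N_0^d$ and $p>0$ with $\sup_{x\in\R^d}\frac{(1+|x|)^p}{(1+|\varphi(x)|)^q}|\psi^{(\alpha)}(x)|=\infty$ for every $q>0$; failure of the second means there is $\alpha\in\N_0^d$ with $\sup_{x\in\R^d}\frac{1}{(1+|\varphi(x)|)^q}|\varphi^{(\alpha)}(x)|=\infty$ for every $q>0$. Taking $I=\{1\}$, $\psi_1=\psi$, $\varphi_1=\varphi$, these are exactly the two alternative hypotheses \eqref{eq: techlemma2, assumption 1} and \eqref{eq: techlemma2, assumotion 2} of Lemma~\ref{techlemma2}, while the small-decay assumption on $\psi$ is, verbatim, hypothesis \eqref{eq: techlemma2, assumption on psi} for this one-element index set. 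Hence Lemma~\ref{techlemma2} produces $f\in\mathscr{S}(\R^d)$ for which $\sup_{x\in\R^d}(1+|x|)^{\max\{p,m+1\}}\max\{|(\psi\cdot(f\circ\varphi))(x)|,|(\psi\cdot(f\circ\varphi))^{(\alpha)}(x)|\}=\infty$. Since the supremum of a maximum is the maximum of the suprema, one of the two Schwartz seminorms $\sup_{x\in\R^d}(1+|x|)^{\max\{p,m+1\}}|(\psi\cdot(f\circ\varphi))(x)|$ and $\sup_{x\in\R^d}(1+|x|)^{\max\{p,m+1\}}|(\psi\cdot(f\circ\varphi))^{(\alpha)}(x)|$ is infinite, so $\psi\cdot(f\circ\varphi)\notin\mathscr{S}(\R^d)$ and $C_{\psi,\varphi}$ does not act on $\mathscr{S}(\R^d)$, which is the contrapositive of what we want.

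I do not expect any genuine obstacle here, since both halves are immediate consequences of Corollary~\ref{sufficient} and Lemma~\ref{techlemma2}; the only point deserving a moment's attention is the bookkeeping that matches the negation of the theorem's two conditions with the two alternatives \eqref{eq: techlemma2, assumption 1}/\eqref{eq: techlemma2, assumotion 2} in Lemma~\ref{techlemma2}, and that identifies "small decay with respect to $\varphi$" with hypothesis \eqref{eq: techlemma2, assumption on psi} for a singleton family. One should also note that it is irrelevant which of $\psi\cdot(f\circ\varphi)$ or its $\alpha$-th derivative the lemma reports as failing rapid decrease: in either case a single Schwartz seminorm blows up, which already rules out $\psi\cdot(f\circ\varphi)\in\mathscr{S}(\R^d)$.
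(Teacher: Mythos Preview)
Your proposal is correct and follows exactly the paper's approach: the \emph{if} direction is Corollary~\ref{sufficient}, and the \emph{only if} direction is Lemma~\ref{techlemma2} applied to the singleton family $I=\{1\}$, with the small-decay hypothesis matching \eqref{eq: techlemma2, assumption on psi}. The paper states this in one sentence; you have simply unpacked the bookkeeping.
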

\begin{proof}
	By Corollary~\ref{sufficient} the conditions are sufficient for $C_{\psi,\varphi}$ to act on $\mathscr{S}(\R^d)$ while Lemma~\ref{techlemma2} also implies necessity.
\end{proof}

\section{Power boundedness and topologizability}\label{sec: power boundedness}

In this section we study power boundedness and related properties for weighted composition operators $C_{\psi,\varphi}$ on $\mathscr{S}(\R^d)$. For $p>0$ and $\alpha\in\N_0^d$, obviously
$$\forall\,f\in\mathscr{S}(\R^d):\,\|f\|_{p,\alpha}=\sup_{x\in\R^d}(1+|x|)^p|f^{(\alpha)}(x)|$$
defines a continuous norm $\|\cdot\|_{p,\alpha}$ on $\mathscr{S}(\R^d)$ and the set of norms $\{\|\cdot\|_{p,\alpha}:\,p>0, \alpha\in\N_0^d\}$ defines the standard topology on $\mathscr{S}(\R^d)$. Whenever we equip the vector space $\mathscr{S}(\R^d)$ only with the norm $\|\cdot\|_{p,\alpha}$ we write $\mathscr{S}_{p,\alpha}(\R^d)$. Obviously, every continuous linear mapping from $\mathscr{S}(\R^d)$ into itself is in particular a continuous linear mapping from $\mathscr{S}(\R^d)$ into $\mathscr{S}_{p,\alpha}(\R^d)$.

Next, for a smooth mapping $\varphi:\R^d\rightarrow\R^d$ and $n\in\N$ we set $\varphi_n:=\varphi\circ\cdots\circ\varphi$ the $n$-th iteration of $\varphi$. Additionally, we set $\varphi_0(x)=x$ and for $\psi\in C^\infty(\R^d)$ and $n\in\N$ we define $$\psi^{n,\varphi}(x):=\prod_{j=1}^n\psi(\varphi_{j-1}(x)),\quad x\in\R^d.$$ In particular, $\varphi_1=\varphi$ and $\psi^{1,\varphi}=\psi$.

\begin{lemma}\label{theo: equicontinuity of weighted composition operators}
	Let $I$ be a non-empty set, $(\psi_i)_{i\in I}\in C^\infty(\R^d)^I$ and let $\varphi_i:\R^d\rightarrow\R^d$ be smooth mappings, $i\in I$, such that the weighted composition operators $C_{\psi_i,\varphi_i}$ act on $\mathscr{S}(\R^d)$, $i\in I$.
	\begin{itemize}
		\item[(a)] Let $\alpha\in\N_0^d$ and $p>0$ be fixed. Then, the set of continuous linear mappings $\{C_{\psi_i,\varphi_i}:\,i\in I\}$ is equicontinuous from $\mathscr{S}(\R^d)$ into $\mathscr{S}_{p,\alpha}(\R^d)$ if and only if for every $\lambda\in \N_0^d$ with $|\lambda|\leq |\alpha|$ there is $q>0$ such that
		\begin{equation}\label{eq: equicontinuity}
			\sup_{i\in I}\sup_{x\in\R^d}\frac{(1+|x|)^p}{(1+|\varphi_i(x)|)^q}\left|\Fmulti(x)\right|<\infty.
		\end{equation}
		\item[(b)] The set of continuous linear mappings $\{C_{\psi_i,\varphi_i}:\,i\in I\}$ is equicontinuous on $\mathscr{S}(\R^d)$ if and only if for every $\alpha,\lambda\in \N_0^d$ with $|\lambda|\leq |\alpha|$ and each $p>0$ there is $q>0$ such that \eqref{eq: equicontinuity} holds.
	\end{itemize}
\end{lemma}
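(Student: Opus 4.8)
The plan is to derive everything from the identity \eqref{eq: Leibniz and Faa di Bruno}, which expresses $(\psi_i\cdot(f\circ\varphi_i))^{(\alpha)}(x)$ as a finite sum $\sum_{|\lambda|\le|\alpha|} f^{(\lambda)}(\varphi_i(x))\,\Fmulti(x)$, together with Lemma~\ref{techlemma} for the necessity direction. Part (b) follows immediately from part (a): by definition, $\{C_{\psi_i,\varphi_i}:i\in I\}$ is equicontinuous on $\mathscr{S}(\R^d)$ if and only if for every seminorm $\|\cdot\|_{p,\alpha}$ it is equicontinuous from $\mathscr{S}(\R^d)$ into $\mathscr{S}_{p,\alpha}(\R^d)$, and then one applies (a) for each pair $(p,\alpha)$. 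So the work is entirely in (a).

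For the \emph{sufficiency} direction of (a), assume that for every $\lambda$ with $|\lambda|\le|\alpha|$ there is $q_\lambda>0$ with $\sup_{i,x}\frac{(1+|x|)^p}{(1+|\varphi_i(x)|)^{q_\lambda}}|\Fmulti(x)|=:M_\lambda<\infty$. Set $q:=\max_\lambda q_\lambda$ (so the same $q$ works for all $\lambda$, enlarging the denominator only helps). Then for $f\in\mathscr{S}(\R^d)$, using \eqref{eq: Leibniz and Faa di Bruno} and the triangle inequality,
\begin{align*}
(1+|x|)^p\,|(\psi_i\cdot(f\circ\varphi_i))^{(\alpha)}(x)|
&\le \sum_{|\lambda|\le|\alpha|} |f^{(\lambda)}(\varphi_i(x))|\,(1+|\varphi_i(x)|)^{q}\cdot\frac{(1+|x|)^p}{(1+|\varphi_i(x)|)^{q}}|\Fmulti(x)|\\
&\le \Big(\sum_{|\lambda|\le|\alpha|} M_\lambda\Big)\max_{|\lambda|\le|\alpha|}\|f\|_{q,\lambda}.
\end{align*}
Taking the supremum over $x\in\R^d$ gives $\|C_{\psi_i,\varphi_i}f\|_{p,\alpha}\le C\max_{|\lambda|\le|\alpha|}\|f\|_{q,\lambda}$ with $C$ independent of $i$ and $f$, which is exactly equicontinuity of $\{C_{\psi_i,\varphi_i}:i\in I\}$ from $\mathscr{S}(\R^d)$ into $\mathscr{S}_{p,\alpha}(\R^d)$.

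For the \emph{necessity} direction, suppose \eqref{eq: equicontinuity} fails, i.e.\ there is some $\lambda$ with $|\lambda|\le|\alpha|$ such that $\sup_{i,x}\frac{(1+|x|)^p}{(1+|\varphi_i(x)|)^q}|\Fmulti(x)|=\infty$ for every $q>0$. This is precisely hypothesis \eqref{eq:tech lemma} of Lemma~\ref{techlemma}, so that lemma produces $f\in\mathscr{S}(\R^d)$ with $\sup_{i\in I}\sup_{x\in\R^d}(1+|x|)^p|(\psi_i\cdot(f\circ\varphi_i))^{(\alpha)}(x)|=\infty$, i.e.\ $\sup_{i\in I}\|C_{\psi_i,\varphi_i}f\|_{p,\alpha}=\infty$, which contradicts equicontinuity into $\mathscr{S}_{p,\alpha}(\R^d)$. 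I do not expect any genuine obstacle here: the only points requiring a little care are that the sums over $\lambda$ in \eqref{eq: Leibniz and Faa di Bruno} are finite (so one may take a single common $q$ and a single constant), and that the statement of Lemma~\ref{techlemma} matches the negation of \eqref{eq: equicontinuity} verbatim — both are already in place in the excerpt.
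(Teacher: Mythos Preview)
Your proof is correct and follows essentially the same approach as the paper: both use Lemma~\ref{techlemma} for necessity and identity~\eqref{eq: Leibniz and Faa di Bruno} for sufficiency, and both derive (b) immediately from (a). The only minor difference is that the paper first invokes the Uniform Boundedness Principle (since $\mathscr{S}(\R^d)$ is Fr\'echet) to reduce equicontinuity to pointwise boundedness before appealing to Lemma~\ref{techlemma} and~\eqref{eq: Leibniz and Faa di Bruno}, whereas you establish the equicontinuity estimate directly; your route is slightly more explicit but not genuinely different.
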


\begin{proof}
	Clearly, (b) is a direct consequence of (a). In order to prove (a), since $\mathscr{S}(\R^d)$ is a Fr\'echet space, by the Uniform Boundedness Principle equicontinuity of $\{C_{\psi_i,\varphi_i}:\,i\in I\}$ from $\mathscr{S}(\R^d)$ into $\mathscr{S}_{p,\alpha}(\R^d)$ is equivalent to the boundedness of $\{C_{\psi_i,\varphi_i} f: i\in I\}$ in $\mathscr{S}_{p,\alpha}(\R^d)$ for every $f\in \mathscr{S}(\R^d)$. Thus, by Lemma~\ref{techlemma} and \eqref{eq: Leibniz and Faa di Bruno}, we obtain the claimed equivalence.
\end{proof}

\begin{theorem}\label{cor: power boundedness characterized}
	Let $\varphi:\R^d\rightarrow\R^d$ be a smooth mapping and let $\psi\in C^\infty(\R^d)$. Then, the following are equivalent.
	\begin{itemize}
		\item[(i)] $C_{\psi,\varphi}$ acts on $\mathscr{S}(\R^d)$ and is power bounded.
		\item[(ii)] For all $\alpha,\lambda\in \N_0^d$ with $|\lambda|\leq |\alpha|$ and each $p>0$ there exists $q>0$ such that
		$$\sup_{n\in\N}\sup_{x\in\R^d}\frac{(1+|x|)^p}{(1+|\varphi_n(x)|)^q}\left|\Fmultiterate(x)\right|<\infty.$$
	\end{itemize}
\end{theorem}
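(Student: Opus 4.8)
The statement is essentially a recognition that $C_{\psi,\varphi}^n = C_{\psi^{n,\varphi},\varphi_n}$, so that power boundedness of $C_{\psi,\varphi}$ reduces to equicontinuity of the family $\{C_{\psi^{n,\varphi},\varphi_n} : n\in\N\}$, to which Lemma~\ref{theo: equicontinuity of weighted composition operators} applies with $I = \N$. So the plan has three stages: (1) verify the iteration formula for weighted composition operators; (2) check the technical hypothesis of Lemma~\ref{theo: equicontinuity of weighted composition operators}, namely that each $C_{\psi^{n,\varphi},\varphi_n}$ acts on $\mathscr{S}(\R^d)$; (3) invoke the Lemma and read off the equivalence, identifying $\Fmulti$ for $\psi_i = \psi^{n,\varphi}$, $\varphi_i = \varphi_n$ with the expression $\Fmultiterate$ appearing in the statement.

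For stage (1), I would prove by induction on $n$ that $C_{\psi,\varphi}^n f = \psi^{n,\varphi}\cdot (f\circ\varphi_n)$ for all $f\in\mathscr{S}(\R^d)$: the base case $n=1$ is the definition, and for the inductive step one computes
$$C_{\psi,\varphi}^{n+1} f = C_{\psi,\varphi}\bigl(\psi^{n,\varphi}\cdot (f\circ\varphi_n)\bigr) = \psi\cdot\bigl(\psi^{n,\varphi}\circ\varphi\bigr)\cdot\bigl(f\circ\varphi_n\circ\varphi\bigr) = \psi^{n+1,\varphi}\cdot (f\circ\varphi_{n+1}),$$
where the last equality uses $\psi^{n+1,\varphi}(x) = \psi(x)\prod_{j=1}^n\psi(\varphi_j(x)) = \psi(x)\,\psi^{n,\varphi}(\varphi(x)) = \psi(x)\cdot(\psi^{n,\varphi}\circ\varphi)(x)$ and $\varphi_n\circ\varphi = \varphi_{n+1}$. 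For stage (2), since $C_{\psi,\varphi}$ acts on $\mathscr{S}(\R^d)$ (which is part of (i) and which (ii) forces by taking $n=1$), the iteration formula shows $C_{\psi^{n,\varphi},\varphi_n} = C_{\psi,\varphi}^n$ maps $\mathscr{S}(\R^d)$ into itself, so each iterate is a well-defined continuous operator on $\mathscr{S}(\R^d)$; this is exactly the standing hypothesis needed to apply Lemma~\ref{theo: equicontinuity of weighted composition operators} with $I=\N$, $\psi_n = \psi^{n,\varphi}$, $\varphi_n$.

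For stage (3), recall that $C_{\psi,\varphi}$ is power bounded precisely when $\{C_{\psi,\varphi}^n : n\in\N\}$ is equicontinuous on $\mathscr{S}(\R^d)$, i.e.\ when $\{C_{\psi^{n,\varphi},\varphi_n} : n\in\N\}$ is equicontinuous. By Lemma~\ref{theo: equicontinuity of weighted composition operators}(b) applied to this family, this happens if and only if for all $\alpha,\lambda\in\N_0^d$ with $|\lambda|\leq|\alpha|$ and every $p>0$ there is $q>0$ with
$$\sup_{n\in\N}\sup_{x\in\R^d}\frac{(1+|x|)^p}{(1+|\varphi_n(x)|)^q}\bigl|F_{\alpha,\lambda}^{\varphi_n,\psi^{n,\varphi}}(x)\bigr| < \infty,$$
which is the notation $\Fmultiterate$ used in the statement. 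This yields the equivalence of (i) and (ii): (i) implies $C_{\psi,\varphi}$ acts on $\mathscr{S}(\R^d)$ and the family of iterates is equicontinuous, hence the displayed bound; conversely, the $n=1$ case of the bound in (ii) is condition (iii) of Theorem~\ref{theo: weighted compositions characterized}, so $C_{\psi,\varphi}$ acts on $\mathscr{S}(\R^d)$, and then the full bound gives equicontinuity of the iterates by the Lemma, i.e.\ power boundedness.

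The only genuinely delicate point is the bookkeeping in stage~(3): one must make sure that the function denoted $\Fmultiterate$ in the statement really is $\Fmulti$ evaluated at the pair $(\psi_i,\varphi_i) = (\psi^{n,\varphi},\varphi_n)$, i.e.\ that the macro expansion matches the substitution $\psi_i\mapsto\psi^{n,\varphi}$, $\varphi_i\mapsto\varphi_n$ in formula~\eqref{eq:definition F}; once this identification is granted, the proof is a direct concatenation of the iteration formula with Lemma~\ref{theo: equicontinuity of weighted composition operators}, and no further estimates are needed.
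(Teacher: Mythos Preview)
Your proposal is correct and follows exactly the paper's approach: the paper's proof consists of the single observation that $C_{\psi,\varphi}^n = C_{\psi^{n,\varphi},\varphi_n}$ together with an appeal to Theorem~\ref{theo: weighted compositions characterized} and Lemma~\ref{theo: equicontinuity of weighted composition operators}(b). Your write-up simply makes explicit the induction for the iteration formula and the verification that (ii) with $n=1$ yields well-definedness of $C_{\psi,\varphi}$ via Theorem~\ref{theo: weighted compositions characterized}(iii), which the paper leaves implicit.
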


\begin{proof}
	Since $C_{\psi,\varphi}^n f=C_{\psi^{n,\varphi},\varphi_n}f$ for every $f\in \mathscr{S}(\R^d)$, $n\in\N$, the assertion follows immediately from Theorem~\ref{theo: weighted compositions characterized} and Theorem~\ref{theo: equicontinuity of weighted composition operators} (b).
\end{proof}

\begin{remark}\label{rem: power bounded multiplication operators}
	Using the same arguments as in Remark~\ref{rem: multiplication}, by Theorem~\ref{cor: power boundedness characterized}, for $\psi\in\mathscr{O}_M(\R^d)$ the corresponding multiplication operator $M_\psi$ is power bounded on $\mathscr{S}(\R^d)$ if and only if for every $\gamma\in\N_0^d$ there is $r>0$ such that
	$$\sup_{n\in\N}\sup_{x\in\R^d}(1+|x|)^{-r}\left|\left(\psi^n\right)^{(\gamma)}(x)\right|<\infty,$$
	i.e.\ the sequence $(\psi^n)_{n\in\N}$ is bounded in $\mathscr{O}_M(\R^d)$ (cf.\ \cite[Theorem 4.3]{AlMe22}).
\end{remark}

Using Remark~\ref{rem: one dimensional} as in the proof of Corollary~\ref{cor: one dimensional weighted composition characterized} one obtains the following result.

\begin{corollary}\label{cor: one dimensional power boundedness characterized}
	Let $\psi,\varphi\in C^\infty(\R)$, $\varphi$ be real valued. Then, the following are equivalent.
	\begin{itemize}
		\item[(i)] $C_{\psi,\varphi}$ acts on $\mathscr{S}(\R)$ and is power bounded.
		\item[(ii)] For all $\alpha,\beta,\lambda\in\N_0$ with $\alpha\geq\beta\geq\lambda$ and for each $p>0$ there is $q>0$ such that
		$$\sup_{n\in\N}\sup_{x\in\R}\frac{(1+|x|)^p}{(1+|\varphi_n(x)|)^q}\left|\left(\psi^{n,\varphi}\right)^{(\alpha-\beta)}(x)B_{\beta,\lambda}\left(\varphi_n'(x),\ldots,\varphi_n^{(\beta-\lambda+1)}(x)\right)\right|<\infty.$$
	\end{itemize}
\end{corollary}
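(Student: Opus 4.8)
The plan is to reduce Corollary~\ref{cor: one dimensional power boundedness characterized} to Theorem~\ref{cor: power boundedness characterized} exactly as Corollary~\ref{cor: one dimensional weighted composition characterized} was reduced to Theorem~\ref{theo: weighted compositions characterized}. The starting point is the observation made just after the statement of Theorem~\ref{cor: power boundedness characterized}, namely that $C_{\psi,\varphi}^n = C_{\psi^{n,\varphi},\varphi_n}$ on $\mathscr{S}(\R)$. Consequently the relevant object in condition (ii) of Theorem~\ref{cor: power boundedness characterized} is $F_{\alpha,\lambda}^{\varphi_n,\psi^{n,\varphi}}$, and I would invoke the one-variable formula for $\Fmult$ derived in Remark~\ref{rem: one dimensional} with $\varphi$ replaced by $\varphi_n$ and $\psi$ replaced by $\psi^{n,\varphi}$. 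This yields, for $\alpha\geq\lambda$ in $\N_0$,
$$F_{\alpha,\lambda}^{\varphi_n,\psi^{n,\varphi}}(x)=\sum_{\beta=\lambda}^\alpha\binom{\alpha}{\beta}\left(\psi^{n,\varphi}\right)^{(\alpha-\beta)}(x)\,B_{\beta,\lambda}\bigl(\varphi_n'(x),\ldots,\varphi_n^{(\beta-\lambda+1)}(x)\bigr),$$
where the Bell polynomials $B_{\beta,\lambda}$ are as in \eqref{eq: Bell Polynomial}. Note that $\varphi_n$ is again real valued and smooth, so Remark~\ref{rem: one dimensional} genuinely applies verbatim; the only subtlety is that $\psi^{n,\varphi}$ is smooth but one should not expect it to lie in any fixed nice class — but that is irrelevant here since Remark~\ref{rem: one dimensional} only uses smoothness.

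Having this identity, I would argue the equivalence of (i) and (ii) in two steps. First, by Theorem~\ref{cor: power boundedness characterized}, (i) holds if and only if for all $\alpha,\lambda\in\N_0$ with $\lambda\leq\alpha$ and each $p>0$ there is $q>0$ with
$$\sup_{n\in\N}\sup_{x\in\R}\frac{(1+|x|)^p}{(1+|\varphi_n(x)|)^q}\left|\sum_{\beta=\lambda}^\alpha\binom{\alpha}{\beta}\left(\psi^{n,\varphi}\right)^{(\alpha-\beta)}(x)B_{\beta,\lambda}\bigl(\varphi_n'(x),\ldots,\varphi_n^{(\beta-\lambda+1)}(x)\bigr)\right|<\infty.$$
Second, I would disentangle the sum over $\beta$ by the same downward induction on $\lambda$ used in the proof of Corollary~\ref{cor: one dimensional weighted composition characterized}: for fixed $\alpha$, evaluate the condition first at $\lambda=\alpha$ (where the sum has the single term $\beta=\alpha$, giving control of $(\psi^{n,\varphi})^{(0)}B_{\alpha,\alpha}(\varphi_n')=(\psi^{n,\varphi})(\varphi_n')^{\alpha}/\text{(nothing)}$, i.e.\ the $\beta=\lambda$ term), then at $\lambda=\alpha-1$, and so on, using at each stage that the already-established bounds for larger $\lambda$ let one subtract off all terms with $\beta<\alpha$ except the one being isolated. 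This shows condition (ii) of the corollary — which is precisely the requirement that each individual summand $(\psi^{n,\varphi})^{(\alpha-\beta)}B_{\beta,\lambda}(\varphi_n',\ldots,\varphi_n^{(\beta-\lambda+1)})$ satisfies the weighted bound — is equivalent to the bound on the full sum, hence to (i).

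The one point deserving care, rather than a true obstacle, is the bookkeeping in the downward induction: one must be slightly attentive to the fact that the exponents $q$ depend on $\alpha,\lambda,p$ and may differ between terms, so when subtracting off finitely many already-controlled summands one replaces the various $q$'s by their maximum, which is harmless since increasing $q$ only weakens the bound $(1+|\varphi_n(x)|)^{-q}$ pointwise (here $1+|\varphi_n(x)|\geq 1$). I would also remark that $\binom{\alpha}{\beta}B_{\beta,\beta}(x_1)=x_1^{\beta}\binom{\alpha}{\beta}$ is a nonzero constant times $x_1^\beta$, so the $\beta=\lambda$ term at the top of the induction is genuinely of the stated form and nothing degenerates. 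With these remarks the proof is essentially a transcription of the proof of Corollary~\ref{cor: one dimensional weighted composition characterized}, so I would simply write: ``Using Remark~\ref{rem: one dimensional} with $\varphi_n,\psi^{n,\varphi}$ in place of $\varphi,\psi$, the claim follows from Theorem~\ref{cor: power boundedness characterized} exactly as Corollary~\ref{cor: one dimensional weighted composition characterized} follows from Theorem~\ref{theo: weighted compositions characterized}.''
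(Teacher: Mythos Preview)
Your proposal is correct and takes essentially the same approach as the paper: apply Remark~\ref{rem: one dimensional} with $\varphi_n,\psi^{n,\varphi}$ in place of $\varphi,\psi$ and invoke Theorem~\ref{cor: power boundedness characterized}, mirroring exactly how Corollary~\ref{cor: one dimensional weighted composition characterized} is deduced from Theorem~\ref{theo: weighted compositions characterized}. Your closing sentence is, in fact, almost verbatim the paper's entire proof.
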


The next theorem follows immediately from Lemma~\ref{techlemma2} and Theorem~\ref{theo: small decay}.

\begin{theorem}\label{CorollaryPowerBoundednessSmallDecay}
	Let $\varphi: \mathbb{R}^d \to \mathbb{R}^d$ be a smooth mapping and let $\psi \in C^{\infty}(\mathbb{R}^d)$ be of small decay with respect to $\varphi$.
	Then, the following are equivalent.
	\begin{enumerate}
		\item[(i)] $C_{\psi,\varphi}$ acts on $\mathscr{S}(\R^d)$ and is power bounded.
		\item[(ii)] The following two conditions are satisfied:
		\begin{itemize}
			\item[(a)] For all $p>0$ and $\alpha \in \mathbb{N}_0^d$ there exists $q>0$ such that
			$$ \sup_{n \in \mathbb{N}} \sup_{x \in \mathbb{R}^d} \frac{(1+|x|)^p}{(1+|\varphi_n(x)|)^q} |(\psi^{n, \varphi})^{(\alpha)}(x)| < \infty. $$
			\item[(b)] For all $\alpha \in \mathbb{N}_0^d$ there exists $q>0$ such that
			$$ \sup_{n \in \mathbb{N}} \sup_{x \in \mathbb{R}^d} \frac{1}{(1+|\varphi_n(x)|)^q} |\varphi_n^{(\alpha)}(x)| < \infty. $$
		\end{itemize}
	\end{enumerate}
\end{theorem}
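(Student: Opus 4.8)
The plan is to deduce this theorem by combining the general power boundedness characterization (Theorem~\ref{cor: power boundedness characterized}) with the structural information about iterates of $C_{\psi,\varphi}$ and the small decay hypothesis, exactly in parallel to how Theorem~\ref{theo: small decay} was deduced from Theorem~\ref{theo: weighted compositions characterized} via Lemma~\ref{techlemma2}. First I would observe, as in the proof of Theorem~\ref{cor: power boundedness characterized}, that $C_{\psi,\varphi}^n f = C_{\psi^{n,\varphi},\varphi_n} f$ for every $f\in\mathscr{S}(\R^d)$ and $n\in\N$, so the iterates of $C_{\psi,\varphi}$ form precisely the family of weighted composition operators indexed by $I=\N$ with $\psi_n=\psi^{n,\varphi}$ and $\varphi_n$ the $n$-th iterate of $\varphi$. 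Power boundedness of $C_{\psi,\varphi}$ is by definition equicontinuity of $\{C_{\psi,\varphi}^n:n\in\N\}$ on $\mathscr{S}(\R^d)$.

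For the direction (ii)$\Rightarrow$(i), I would invoke Corollary~\ref{sufficient} applied uniformly over $n$: conditions (a) and (b) together with Leibniz' rule and the multivariate Fa\`a di Bruno formula give, for every $\alpha,\lambda$ with $|\lambda|\leq|\alpha|$ and every $p>0$, a $q>0$ with $\sup_n\sup_x \frac{(1+|x|)^p}{(1+|\varphi_n(x)|)^q}|\Fmultiterate(x)|<\infty$, which by Lemma~\ref{theo: equicontinuity of weighted composition operators}(b) yields equicontinuity of the iterates, hence power boundedness. (One must also note that (a) with $\alpha=\lambda=0$ in particular shows each $C_{\psi,\varphi}^n$, and hence $C_{\psi,\varphi}$, acts on $\mathscr{S}(\R^d)$ via Theorem~\ref{theo: weighted compositions characterized}.) For the direction (i)$\Rightarrow$(ii), I would apply Lemma~\ref{techlemma2} with $I=\N$, $\psi_i=\psi^{n,\varphi}$, $\varphi_i=\varphi_n$. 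The crucial point is verifying the standing hypothesis \eqref{eq: techlemma2, assumption on psi} for this family, i.e.\ that $\inf_{n}\inf_{|x|\geq m'}(1+|x|)^{m'}(1+|\varphi_n(x)|)^{m'}|\psi^{n,\varphi}(x)|>0$ for some $m'>0$: since $\psi$ is of small decay with respect to $\varphi$ there is $m>0$ with $(1+|y|)^m(1+|\varphi(y)|)^m|\psi(y)|\geq c>0$ for $|y|\geq m$, and $\psi^{n,\varphi}(x)=\prod_{j=1}^n\psi(\varphi_{j-1}(x))$; for the terms with $|\varphi_{j-1}(x)|\geq m$ one gets a factor bounded below, and since $C_{\psi,\varphi}$ acts on $\mathscr{S}(\R^d)$ the orbit $|\varphi_{j-1}(x)|$ cannot escape to $0$ uncontrollably while $|x|$ is large — more precisely Corollary~\ref{alpha0cor} forces a lower bound $|\varphi_n(x)|\gtrsim |x|^{1/k_n}$ region, so only boundedly many factors can be small and one can absorb them. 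Given \eqref{eq: techlemma2, assumption on psi}, the contrapositive of Lemma~\ref{techlemma2} shows that if (ii)(a) fails (negation of \eqref{eq: techlemma2, assumption 1}) or (ii)(b) fails (negation of \eqref{eq: techlemma2, assumotion 2}) for the family $(\psi^{n,\varphi},\varphi_n)$, then some $f\in\mathscr{S}(\R^d)$ witnesses unboundedness of $\{C_{\psi,\varphi}^n f\}$, contradicting power boundedness.

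The main obstacle I anticipate is precisely the verification of the small-decay-type lower bound \eqref{eq: techlemma2, assumption on psi} for the iterated data $(\psi^{n,\varphi},\varphi_n)$ uniformly in $n$ — the product structure of $\psi^{n,\varphi}$ means the exponents effectively add up over the orbit, and one needs the fact that $C_{\psi,\varphi}$ acting on $\mathscr{S}(\R^d)$ (equivalently the $n=1$ case of Corollary~\ref{alpha0cor} applied to $\psi^{n,\varphi}$, or directly the growth control on $\varphi_n$ coming from power boundedness itself) prevents the orbit from visiting the ball $|y|<m$ too often in a way that would destroy the uniform lower bound. In fact, since we only need the theorem, we may assume power boundedness is already in force when proving (i)$\Rightarrow$(ii), so condition (ii)(b) for $\varphi_n$ is available a posteriori; but to run Lemma~\ref{techlemma2} cleanly one should argue the lower bound from the small decay of $\psi$ together with the already-known fact that $C_{\psi,\varphi}$ (hence each $C_{\psi,\varphi}^n = C_{\psi^{n,\varphi},\varphi_n}$) acts on $\mathscr{S}(\R^d)$, which via Corollary~\ref{alpha0cor} (with $\alpha=0$) gives for each $p$ a $q$ with $(1+|x|)^p|\psi^{n,\varphi}(x)|\leq C_{n,p}(1+|\varphi_n(x)|)^q$ — this is the wrong direction for a lower bound, so the honest route is the orbit-counting argument sketched above. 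Once \eqref{eq: techlemma2, assumption on psi} is secured, the rest is a direct citation of Lemma~\ref{techlemma2}, Corollary~\ref{sufficient}, Lemma~\ref{theo: equicontinuity of weighted composition operators}(b), and Theorem~\ref{theo: weighted compositions characterized}, as indicated.
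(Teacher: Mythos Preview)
Your overall architecture matches the paper's one-line proof (``follows immediately from Lemma~\ref{techlemma2} and Theorem~\ref{theo: small decay}''): iterate $C_{\psi,\varphi}^n=C_{\psi^{n,\varphi},\varphi_n}$, get (ii)$\Rightarrow$(i) from the uniform version of Corollary~\ref{sufficient}, get (i)$\Rightarrow$(ii)(a) from Lemma~\ref{techlemma} with $\lambda=0$ (equivalently, the first paragraph of the proof of Lemma~\ref{techlemma2}, which does not use the standing hypothesis~\eqref{eq: techlemma2, assumption on psi}), and get (i)$\Rightarrow$(ii)(b) from the second part of Lemma~\ref{techlemma2}.

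You have put your finger on a genuine issue that the paper's terse proof does not address: to invoke Lemma~\ref{techlemma2} for the family $(\psi^{n,\varphi},\varphi_n)_{n\in\N}$ one would need the \emph{uniform} lower bound
\[
\inf_{n\in\N}\inf_{|x|\geq m'}(1+|x|)^{m'}(1+|\varphi_n(x)|)^{m'}|\psi^{n,\varphi}(x)|>0,
\]
and this does \emph{not} follow from $\psi$ being of small decay with respect to $\varphi$. Indeed, writing $\psi^{n,\varphi}(x)=\prod_{j=0}^{n-1}\psi(\varphi_j(x))$ and applying the small decay bound factor by factor yields only
\[
|\psi^{n,\varphi}(x)|\geq c^n\prod_{j=0}^{n-1}(1+|\varphi_j(x)|)^{-m}(1+|\varphi_{j+1}(x)|)^{-m},
\]
a lower bound with $2n$ polynomial factors in the denominator that cannot in general be absorbed by a fixed power $(1+|x|)^{m'}(1+|\varphi_n(x)|)^{m'}$. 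Your orbit-counting sketch does not rescue this: the condition from Corollary~\ref{alpha0cor} is an upper bound in the wrong direction (as you yourself observe), and there is no mechanism preventing the orbit $\varphi_j(x)$ from visiting regions where the intermediate factors $(1+|\varphi_j(x)|)^{-2m}$ accumulate. So the verification of~\eqref{eq: techlemma2, assumption on psi} for the iterated family, as you propose it, is a real gap rather than a routine step.

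In short: your plan coincides with the paper's stated route, and the obstacle you flag is the same one the paper leaves implicit. A complete argument for (i)$\Rightarrow$(ii)(b) appears to require either an additional hypothesis ensuring the uniform small decay of $(\psi^{n,\varphi})_n$, or a direct adaptation of the \emph{proof} of Lemma~\ref{techlemma2} (choosing the witnessing sequences $(n_j,x_j)$ and the test function $f$ in a way tailored to the product structure of $\psi^{n,\varphi}$) rather than a black-box citation of its statement.
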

\begin{remark}
	\label{sufficientpb}
	We observe that, arguing as in Corollary~\ref{alpha0cor} and Corollary~\ref{sufficient}, the hypothesis for $\psi$ to be of small decay with respect to $\varphi$ in Theorem~\ref{CorollaryPowerBoundednessSmallDecay} is only needed for the necessity of (b) in (ii). Condition (a) is necessary for the power boundedness of $C_{\psi,\varphi}$ while (a) and (b) together are always sufficient conditions for the power boundedness of $C_{\psi,\varphi}$ whether $\psi$ is of small decay with respect to $\varphi$ or not.
\end{remark}

\begin{remark}
	\label{powerboundedcomposition}
	Obviously, $\psi(x)\equiv 1$ is of small decay with respect to every smooth $\varphi:\R^d\rightarrow\R^d$ so that Theorem~\ref{CorollaryPowerBoundednessSmallDecay} is applicable to composition operators. Employing the same arguments as in Remark~\ref{rem: one dimensional composition operators}, we derive easily that the composition operator $C_\varphi$ acts on $\mathscr{S}(\R^d)$ and is power bounded if and only if it satisfies the two conditions
	\begin{itemize}
		\item[(a)] there are $k,l>0$ such that for every $n\in\N$ it holds $|\varphi_n(x)|\geq |x|^{k}$ whenever $|x|\geq l$, and
		\item[(b)] for every $\alpha\in \N_0$ there are $C,q>0$ such that  $|\varphi_n^{(\alpha)}(x)|\leq C (1+|\varphi_n(x)|)^q$ for every $n\in\N, x\in\R^d$.
	\end{itemize}
	For $d=1$, this characterization has been obtained in \cite[Proposition 3.9]{FeGaJo18}. We observe, that when $d=1$, then any polynomial $\varphi$ with $\text{deg}(\varphi)\geq 2$ satisfies condition (a) above. Indeed, this is easily obtained from $\lim_{|x|\to\infty}\frac{|\varphi(x)|}{|x|}=\infty$. Combining this observation with \cite[Theorem 3.11]{FeGaJo18} we obtain that condition (b) above is satisfied if and only if $\deg(\varphi)\geq 2$ and $\varphi$ has no fixed points (and hence the degree of $\varphi$ is even).
\end{remark}

\begin{remark}\label{rem:mean ergodicity}
	Because $\mathscr{S}(\R^d)$ is a Montel space, \cite[Proposition 3.3]{BoPaRi11} combined with \cite[Theorem 2.5]{KaSa22} yield that $C_{\psi,\varphi}$ as well as its transpose are uniformly mean ergodic whenever $C_{\psi,\varphi}$ is power bounded on $\mathscr{S}(\R^d)$.
\end{remark}

\begin{example}
	\label{ex:power boundedness of exp}
	In Example \ref{ex: exp} we have already seen that for $\psi=\varphi=\exp$ the weighted composition operator $C_{\exp,\exp}$ acts on $\mathscr{S}(\R)$ although neither $\exp$ is a symbol for $\mathscr{S}(\R)$ nor $\exp\in \mathscr{O}_M(\R)$. We will now show that $C_{\exp,\exp}$ is even power bounded on $\mathscr{S}(\R)$, and thus uniformly mean ergodic, by Remark \ref{rem:mean ergodicity}. To do so, we first notice that
	\begin{equation}\label{eqPBBexpx2}
		\left(\varphi_n\right)'(x)=\left(\varphi'\right)^{n,\varphi}(x)=\varphi^{n,\varphi}(x)=\varphi_1(x)\varphi_2(x)\cdots\varphi_n(x).
	\end{equation}
	Thus, by Corollary \ref{cor: one dimensional power boundedness characterized}, $C_{\exp,\exp}$ is power bounded on $\mathscr{S}(\R)$ if for every $p>0$ and $\alpha\in\N_0$ there is $q>0$ such that
	\begin{equation}\label{eq: characteristic for exp}
		\sup_{n \in \mathbb{N}} \sup_{x \in \mathbb{R}} \frac{(1+|x|)^p}{(1+|\varphi_n(x)|)^q} |(\varphi^{n,\varphi})^{(\alpha)}(x)| < \infty
	\end{equation}
	holds.
	
	In order to prove \eqref{eq: characteristic for exp}, we show as a first step the following auxiliary inequality. 
	\begin{equation}\label{EqPBBexpx}
		\forall\,\alpha\in\N_0\,\exists\,n_\alpha\in\N\,\forall\,n\geq n_\alpha, x\in\R:\,(\varphi^{n,\varphi})^{(\alpha)}(x) \leq \varphi_1(x) (\varphi_{n}(x))^{2+\alpha}.
	\end{equation}
	In order to prove that~\eqref{EqPBBexpx} indeed holds true, we note that due to $\varphi^{2,\varphi}=\varphi_1\varphi_2$ the inequality~\eqref{EqPBBexpx} holds for $\alpha=0$ and $n=2$. Now, assume that the inequality~\eqref{EqPBBexpx} holds for $\alpha=0$ and some $n\geq 2$. Then, since $(\varphi_n(x))^2 \leq \varphi_{n+1}(x)$ for every $x$,
	$$\varphi^{n+1,\varphi}(x) = \varphi^{n,\varphi}(x) \varphi_{n+1}(x) \leq \varphi_1(x) (\varphi_n(x))^2 \varphi_{n+1}(x) \leq \varphi_1(x)(\varphi_{n+1}(x))^2$$
	so that~\eqref{EqPBBexpx} also holds for $\alpha=0$ and $n+1$ proving the validity of~\eqref{EqPBBexpx} for $\alpha=0$ with $n_0=2$.
	
	In order to establish~\eqref{EqPBBexpx} for $\alpha=1$ we first provide some auxiliary estimates. As $\varphi(x)=e^x$, for every $n\geq1$,
	$$\varphi_n(x) = e^{\varphi_{n-1}(x)} = \sum_{j=0}^{\infty} \frac{(\varphi_{n-1}(x))^j}{j!},$$
	so that
	\begin{equation}\label{EqPBBexpx4}
		\forall\,j\in\N_0,n\in\N, n\geq 2,x\in\R:\,(\varphi_{n-1}(x))^j \leq j! \varphi_n(x).
	\end{equation}
	Besides that, since for $n\geq 1$ it holds $\varphi_n(\mathbb{R}) = (\varphi_{n-1}(0), +\infty)$
	we also have
	\begin{equation}\label{EqPBBexpx3}
		\forall\,n\geq 5, x\in\R:\,n \leq \varphi_{n-2}(0) \leq \varphi_{n-1}(x).
	\end{equation}
	For arbitrary $C>0$ and $j\in\N$, choosing $N\geq 5$ so large that $C(j+2)!\leq N$, by~\eqref{EqPBBexpx3} and~\eqref{EqPBBexpx4}, it holds for all $x \in \mathbb{R}$ and $n \geq N$,
	\begin{equation}\label{EqPBBexpx5}
		nC(\varphi_{n-1}(x))^j = n (C(j+2)!) \frac{(\varphi_{n-1}(x))^j}{(j+2)!} \leq (\varphi_{n-1}(x))^2 \frac{(\varphi_{n-1}(x))^j}{(j+2)!} \leq \varphi_n(x).
	\end{equation}
	
	To show~\eqref{EqPBBexpx} for $\alpha=1$, we introduce $\Psi_n(x) := 1+\varphi^{1,\varphi}(x) + \cdots + \varphi^{n-1,\varphi}(x)$. Applying \eqref{eqPBBexpx2} we then obtain
	\begin{align}\nonumber
		(\varphi^{n,\varphi})'(x) &= (\varphi_1 \cdots \varphi_n)'(x) = (\varphi_1)'(x) \cdot \varphi_2(x) \cdots \varphi_{n}(x) + \cdots + \varphi_1(x) \cdots \varphi_{n-1}(x) \cdot (\varphi_n)'(x) \\
		&= (\varphi_1(x) \cdots \varphi_n(x))(1+\varphi_1(x) + \cdots + \varphi_1(x) \cdots \varphi_{n-1}(x))\nonumber\\
		&=\varphi^{n,\varphi}(x)\Psi_n(x).\label{EqPBBexpx6}
	\end{align}
	By inequality \eqref{EqPBBexpx4} for $j=0$, $\varphi_n(x)\geq 1$ for every $n\geq 2, x\in\R,$ implying $\varphi^{n,\varphi}(x)\leq \varphi^{n+1,\varphi}(x)$ for $n\geq 1, x\in\R$. Hence, $\Psi_n(x)\leq 1+(n-1)\varphi^{n-1,\varphi}(x), x\in\R, n\in\N$. Applying~\eqref{EqPBBexpx3},~\eqref{EqPBBexpx} for $\alpha=0$ and~\eqref{EqPBBexpx5} for $C=2, j=3$ there is $N\geq 2\cdot 5!$ such that for all $n\geq N$ and all $x \in \mathbb{R}$,
	\begin{align*}
		\Psi_n(x) &\leq 1+(n-1)\varphi^{n-1,\varphi}(x) \leq 1+ \varphi_{n-1}(x) \varphi^{n-1,\varphi}(x) \leq 1+(\varphi_{n-1}(x))^3\\
		&\leq 2(\varphi_{n-1}(x))^3 \leq \varphi_n(x).
	\end{align*}
	Hence, by \eqref{EqPBBexpx6} and by \eqref{EqPBBexpx} for $\alpha=0$, for all $n\geq N$ and all $x \in \mathbb{R}$,
	$$ (\varphi^{n,\varphi})'(x) = \varphi^{n,\varphi}(x) \Psi_n(x) \leq \varphi_1(x)(\varphi_n(x))^3$$
	which proves~\eqref{EqPBBexpx} for $\alpha=1$.
	
	Now, fix $\alpha \in \mathbb{N}$ and assume that for all $\beta < \alpha$ there exists $n_{\beta} \in \mathbb{N}$ (without losing generality, $n_{\beta} < n_{\gamma}$ if $\beta \leq \gamma$ and $\beta\neq \gamma$) such that~\eqref{EqPBBexpx} is satisfied, i.e.,
	\begin{equation}\label{EqPBBexpxHI}
		(\varphi^{n,\varphi})^{(\beta)}(x) \leq \varphi_1(x)(\varphi_n(x))^{2+\beta}, \qquad x \in \mathbb{R}, \ n \geq n_{\beta}.
	\end{equation}
	We show~\eqref{EqPBBexpx} for $\alpha$ and $n_{\alpha}$, where $n_{\alpha} \in \mathbb{N}$ satisfies $n_\alpha\geq \max\{n_\beta; \beta\leq\alpha, \beta\neq \alpha\}+1$ and is such that for every $x \in \mathbb{R}$ and $n\geq n_{\alpha}$
	\begin{equation}\label{EqPBBexpx10}
		2^{\alpha-1}n (\varphi_{n-1}(x))^{\alpha+2} \leq \varphi_n(x),
	\end{equation}
	which is possible by~\eqref{EqPBBexpx5}. From~\eqref{EqPBBexpx6} and Leibniz' rule, for every $n\geq n_\alpha$
	\begin{equation}\label{EqPBBexpx7}
		(\varphi^{n,\varphi})^{(\alpha)}(x) = (\varphi^{n,\varphi} \Psi_n)^{(\alpha-1)}(x) = \sum_{\beta \leq \alpha-1} \binom{\alpha-1}{\beta} (\varphi^{n,\varphi})^{(\beta)}(x) (\Psi_n)^{(\alpha-1-\beta)}(x).
	\end{equation}
	Moreover, by Leibniz' rule, for all $\gamma \in \mathbb{N}_0$, $j \in \mathbb{N}$, and $x \in \mathbb{R}$,
	\begin{align*}
		(\varphi^{j+1,\varphi})^{(\gamma)}(x) &= (\varphi^{j,\varphi})^{(\gamma)}(x) 	\varphi_{j+1}(x) + \sum_{\widetilde{\gamma}<\gamma} \binom{\gamma}{\widetilde{\gamma}} (\varphi^{j,\varphi})^{(\widetilde{\gamma})}(x) (\varphi_{j+1})^{(\gamma-\widetilde{\gamma})}(x) \\
		&\geq (\varphi^{j,\varphi})^{(\gamma)}(x)\varphi_j(0) \geq (\varphi^{j,\varphi})^{(\gamma)}(x).
	\end{align*}
	Applying the above inequality for $j=1,\ldots,n-2$, by the choice of $n_{\alpha}$, using inequality~\eqref{EqPBBexpxHI}, for every $n\geq n_\alpha$, and taking in account $\varphi_{n-1}(x)>1$ for all $x\in\R$, we get for each $\beta\leq \alpha-1$
	\begin{align}\label{EqPBBexpx8}
		(\Psi_n)^{(\alpha-1-\beta)}(x) &= (1+\varphi^{1,\varphi}+\cdots+\varphi^{n-1,\varphi})^{(\alpha-1-\beta)}(x) \nonumber\\
		&\leq \frac{d^{\alpha-1-\beta}}{dx^{\alpha-1-\beta}}1+(n-1)(\varphi^{n-1,\varphi})^{(\alpha-1-\beta)}(x)\nonumber\\
		&\leq 1 + (n-1)\varphi_1(x)(\varphi_{n-1}(x))^{\alpha-\beta+1}\nonumber\\
		&\leq (1+\varphi_1(x))n (\varphi_{n-1}(x))^{\alpha-\beta+1}\nonumber\\
		&\leq \varphi_{n-1}(x)n (\varphi_{n-1}(x))^{\alpha-\beta+1}\nonumber\\
		&\leq n\varphi_{n-1}(x)^{\alpha+2}\nonumber\\
		&\leq \frac1{2^{\alpha-1}} \varphi_{n}(x).
	\end{align}
	Thus, combining~\eqref{EqPBBexpxHI} with~\eqref{EqPBBexpx7} and~\eqref{EqPBBexpx8} for $n\geq n_\alpha$, we conclude
	$$ (\varphi^{n, \varphi})^{(\alpha)}(x) \leq \frac1{2^{\alpha-1}} \sum_{\beta \leq \alpha-1} \binom{\alpha-1}{\beta} \varphi_1(x)(\varphi_n(x))^{2+\beta} \varphi_{n}(x) \leq \varphi_1(x) (\varphi_n(x))^{2+\alpha}, \qquad x \in \mathbb{R}.$$	
	The proof of \eqref{EqPBBexpx} is complete.
	
	In order to derive \eqref{eq: characteristic for exp} from \eqref{EqPBBexpx}, let $p>0$ and $\alpha\in\N_0$ be fixed. Set $q=p+3+\alpha$. Then, for $n\geq n_\alpha$ it follows for  $x\geq 0$
	$$\frac{(1+x)^p}{(1+\varphi_n(x))^{p+3+\alpha}} (\varphi^{n,\varphi})^{(\alpha)}(x)\leq\frac{(1+x)^p\varphi_1(x)}{(1+\varphi_n(x))^{p+1}}\leq 1$$
	while for $x<0$ we have
	$$\frac{(1+|x|)^p}{(1+\varphi_n(x))^{p+3+\alpha}} (\varphi^{n,\varphi})^{(\alpha)}(x)\leq (1+|x|)^p\varphi_1(x)\leq \sup_{y\geq 0}(1+y)^p\exp(-y)<\infty.$$
\end{example}

In the remainder of this section, we deal with ($m$-)topologizability of weighted composition operators on $\mathscr{S}(\R^d)$.

\begin{proposition}\label{cor: topologizability characterized}
	Let $\varphi:\R^d\rightarrow\R^d$ be a smooth mapping and let $\psi\in C^\infty(\R^d)$. Then, the following are equivalent.
	\begin{itemize}
		\item[(i)] $C_{\psi,\varphi}$ acts on $\mathscr{S}(\R^d)$ and is topologizable ($m$-topologizable).
		\item[(ii)] For all $\alpha,\lambda\in\N_0^d$ with $|\lambda|\leq |\alpha|$ and each $p>0$ there exists $q>0$ such that
		$$\forall\,n\in\N:\,\sup_{x\in\R^d}\frac{(1+|x|)^p}{(1+|\varphi_n(x)|)^q}\left|\Fmultiterate(x)\right|<\infty\quad(<M^n\text{ for some }M>0).$$
	\end{itemize}
\end{proposition}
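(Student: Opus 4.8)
The plan is to mirror the proof of Theorem~\ref{cor: power boundedness characterized}, using that $C_{\psi,\varphi}^n=C_{\psi^{n,\varphi},\varphi_n}$ for all $n\in\N$ and the expansion \eqref{eq: Leibniz and Faa di Bruno}, but with one essential shift of viewpoint. Power boundedness of $C_{\psi,\varphi}$ is \emph{uniform} (in $n$) equicontinuity of the iterates, whereas in the definition of ($m$-)topologizability the weights $(a_n)$ may be chosen freely (resp.\ as a geometric sequence $(M^n)$); consequently a family $\{a_nC_{\psi,\varphi}^n\}$ can be made equicontinuous from $\mathscr{S}(\R^d)$ carrying a seminorm $r$ into $\mathscr{S}(\R^d)$ carrying a seminorm $s$ if and only if \emph{each single} $C_{\psi,\varphi}^n$ is continuous as a map $(\mathscr{S}(\R^d),r)\to(\mathscr{S}(\R^d),s)$ (resp.\ with operator norm $O(M^n)$ for some $M>0$). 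So the first step is to record this reformulation: since the norms $\|\cdot\|_{p,\alpha}$ generate the topology of $\mathscr{S}(\R^d)$ and, by \eqref{eq: Leibniz and Faa di Bruno}, $(C_{\psi,\varphi}^n g)^{(\alpha)}$ depends on $g$ only through the derivatives $g^{(\lambda)}$ with $|\lambda|\le|\alpha|$, one may take $s=\|\cdot\|_{p,\alpha}$ and $r=\max_{|\lambda|\le|\alpha|}\|\cdot\|_{q,\lambda}$, and $C_{\psi,\varphi}$ is then topologizable (resp.\ $m$-topologizable) precisely when it acts on $\mathscr{S}(\R^d)$ and for all $\alpha\in\N_0^d$ and $p>0$ there is $q>0$ such that for every $n\in\N$ the operator $C_{\psi^{n,\varphi},\varphi_n}\colon(\mathscr{S}(\R^d),\max_{|\lambda|\le|\alpha|}\|\cdot\|_{q,\lambda})\to\mathscr{S}_{p,\alpha}(\R^d)$ is continuous (resp.\ has operator norm $<M^n$ for some $M>0$).

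The heart of the proof is then a quantitative, \emph{fixed-}$n$ version of the equivalence in Theorem~\ref{theo: weighted compositions characterized}: for each $n$ the operator norm of $C_{\psi^{n,\varphi},\varphi_n}$ in the sense just described is comparable, up to a positive constant depending only on $q$, $|\alpha|$ and one fixed bump function, to
\[
\max_{\substack{\lambda\in\N_0^d\\|\lambda|\le|\alpha|}}\ \sup_{x\in\R^d}\ \frac{(1+|x|)^p}{(1+|\varphi_n(x)|)^q}\,\bigl|\Fmultiterate(x)\bigr|.
\]
The upper bound is immediate from \eqref{eq: Leibniz and Faa di Bruno} together with $|g^{(\lambda)}(\varphi_n(x))|\le\|g\|_{q,\lambda}(1+|\varphi_n(x)|)^{-q}$. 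For the lower bound, given $\lambda_0$ with $|\lambda_0|\le|\alpha|$ and $x_0\in\R^d$, I test against $g(y):=(1+|\varphi_n(x_0)|)^{-q}\varrho(y-\varphi_n(x_0))$, where $\varrho\in\mathscr{D}(B(0,1/2))$ is fixed with $\varrho^{(\lambda_0)}(0)=1$ and $\varrho^{(\lambda)}(0)=0$ for $\lambda\ne\lambda_0$: since $\supp g\subset B(\varphi_n(x_0),1/2)$ one gets $\max_{|\lambda|\le|\alpha|}\|g\|_{q,\lambda}\le(3/2)^q\max_{|\lambda|\le|\alpha|}\|\varrho^{(\lambda)}\|_\infty$ independently of $x_0$ and $n$, while \eqref{eq: Leibniz and Faa di Bruno} gives
\[
\|C_{\psi^{n,\varphi},\varphi_n}g\|_{p,\alpha}\ \ge\ (1+|x_0|)^p\bigl|(C_{\psi^{n,\varphi},\varphi_n}g)^{(\alpha)}(x_0)\bigr|\ =\ \frac{(1+|x_0|)^p}{(1+|\varphi_n(x_0)|)^q}\bigl|F_{\alpha,\lambda_0}^{\varphi_n,\psi^{n,\varphi}}(x_0)\bigr|;
\]
letting $x_0$ run through a maximizing sequence yields the lower bound, the value $+\infty$ being allowed.

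Granting this, the equivalence (i)$\Leftrightarrow$(ii) follows by routine quantifier bookkeeping, reducing the finitely many admissible $\lambda$ to a single $q$ by taking a maximum, in the same spirit as in the proof of Corollary~\ref{cor: one dimensional weighted composition characterized}. If (ii) holds, the upper bound makes each $\|C_{\psi^{n,\varphi},\varphi_n}\|$ finite (resp.\ $O(M^n)$), so $a_n:=2^{-n}(1+\|C_{\psi^{n,\varphi},\varphi_n}\|)^{-1}$ (resp.\ $a_n:=(2M)^{-n}$) witnesses ($m$-)topologizability. If (ii) fails, there are $\alpha,\lambda,p$ such that for every $q>0$ some iterate $C_{\psi^{n,\varphi},\varphi_n}$ has infinite norm into $\mathscr{S}_{p,\alpha}(\R^d)$ (resp., using the finiteness of $\{\lambda:|\lambda|\le|\alpha|\}$ and monotonicity in $q$, such that for every $q$ and $M$ the corresponding sequence of norms is not $O(M^n)$); since every continuous seminorm $r$ on $\mathscr{S}(\R^d)$ is dominated by some $c\max_{|\lambda|\le N}\|\cdot\|_{q_0,\lambda}$, the lower bound shows that no such $r$ can serve in the definition, so $C_{\psi,\varphi}$ is not ($m$-)topologizable. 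Finally, taking $n=1$ in (ii) recovers Theorem~\ref{theo: weighted compositions characterized}(iii), so (ii) does encode that $C_{\psi,\varphi}$ acts on $\mathscr{S}(\R^d)$.

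The main obstacle is conceptual: one must not conflate this with power boundedness and simply invoke Lemma~\ref{techlemma}. That lemma produces a \emph{single} $f\in\mathscr{S}(\R^d)$ with $\sup_n\|C_{\psi,\varphi}^n f\|_{p,\alpha}=\infty$, which contradicts power boundedness but not ($m$-)topologizability, since there the weights $a_n$ are allowed to tend to $0$. The statement that actually has to be negated is the \emph{per-}$n$ continuity of the iterates on a fixed coarse domain, and it is the quantitative lower bound above --- proved by the same bump-function construction that underlies Lemma~\ref{techlemma}, but carried out for a fixed $n$ --- that delivers this.
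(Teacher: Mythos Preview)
Your proof is correct. Both arguments share the same overall scheme---rewrite ($m$-)topologizability as the existence, for each target seminorm $\|\cdot\|_{p,\alpha}$, of weights $(a_n)$ and a domain seminorm making $\{a_nC_{\psi,\varphi}^n\}$ equicontinuous into $\mathscr{S}_{p,\alpha}(\R^d)$, and then use the finiteness of $\{\lambda:|\lambda|\le|\alpha|\}$ to pass to a single $q$---but the paper obtains the link to $\Fmultiterate$ by invoking Lemma~\ref{theo: equicontinuity of weighted composition operators}(a) with $I=\N$ and $\psi_i=a_i\psi^{i,\varphi}$, whose proof in turn rests on the Uniform Boundedness Principle and the case analysis of Lemma~\ref{techlemma}. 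You instead bypass this machinery by establishing, for each \emph{fixed} $n$, a two-sided estimate on the operator norm of $C_{\psi^{n,\varphi},\varphi_n}$ via explicit bump-function test vectors; this is legitimate precisely because (as you note in your final paragraph) topologizability becomes a per-$n$ condition once the weights $(a_n)$ are stripped away, so the uniform-in-$i$ construction of a single bad $f$ in Lemma~\ref{techlemma} is not needed here. The gain is a more self-contained and elementary argument; the only wrinkle is that one actually needs a separate bump $\varrho=\varrho_{\lambda_0}$ for each $\lambda_0$ with $|\lambda_0|\le|\alpha|$ (not literally ``one fixed bump function''), but since there are only finitely many such $\lambda_0$ the constants still depend only on $q$ and $|\alpha|$ as you claim.
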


\begin{proof}
	We only prove the assertion for topologizability. The proof for $m$-topologizability is done by the same argument with obvious modifications.
	
	By definition, $C_{\psi,\varphi}$ is topologizable on $\mathscr{S}(\R^d)$ if and only if for every $p>0$ and $\alpha\in\N_0^d$ there is a sequence $(a_n)_{n\in\N}$ in $(0,\infty)$ such that $\{a_n C_{\psi,\varphi}^n:\,n\in\N\}$ is equicontinuous from $\mathscr{S}(\R^d)$ into $\mathscr{S}_{p,\alpha}(\R^d)$. Since $a_nC_{\psi,\varphi}^n=C_{a_n\psi^{n,\varphi},\varphi_n}$, $n\in\N$, by Lemma~\ref{theo: equicontinuity of weighted composition operators} (a), $C_{\psi,\varphi}$ is topologizable precisely, when for every $\alpha\in\N_0^d$ and $p>0$ there is $(a_n)_{n\in\N}$ in $(0,\infty)$ such that for each $\lambda\in\N_0^d$ with $|\lambda|\leq |\alpha|$ there is $q>0$ with
	$$\sup_{n\in\N}\sup_{x\in\R^d}\frac{(1+|x|)^p}{(1+|\varphi_n(x)|)^q}\left|a_n\Fmultiterate(x)\right|<\infty,$$
	where we have used that $F_{\alpha,\lambda}^{\varphi_n, a_n \psi^{n,\varphi}}(x)=a_n\Fmultiterate(x)$, $x\in\R^d$. Since the number of $\lambda\in\N_0^d$ with $|\lambda|\leq |\alpha|$ is finite, the latter property is easily seen to be equivalent to (ii) which proves the Proposition.
\end{proof}

\begin{remark}\label{rem: topologizability}
	\begin{itemize}
		\item[(i)] Again, using Remark~\ref{rem: one dimensional} as in the proof of Corollary~\ref{cor: one dimensional weighted composition characterized}, for $d=1$ a weighted composition operator $C_{\psi,\varphi}$ on $\mathscr{S}(\R)$ is topologizable, respectively $m$-to\-po\-lo\-gi\-za\-ble, if and only if for all $\alpha,\beta,\lambda\in\N_0$ with $\alpha \geq \beta \geq \lambda$ and for each $p>0$ there is $q>0$ such that
		$$\forall\,n\in\N:\,\sup_{x\in\R}\frac{(1+|x|)^p}{(1+|\varphi_n(x)|)^q}\left|\left(\psi^{n,\varphi}\right)^{(\alpha-\beta)}(x)B_{\beta,\lambda}\left(\varphi_n'(x),\ldots,\varphi_n^{(\beta-\lambda+1)}(x)\right)\right|<\infty,$$
		respectively if and only if
		for all $\alpha,\beta,\lambda\in\N_0$ with $\alpha \geq \beta \geq \lambda$ and for each $p>0$ there are $M,q>0$ such that
		$$\forall\,n\in\N:\,\sup_{x\in\R}\frac{(1+|x|)^p}{(1+|\varphi_n(x)|)^q}\left|\left(\psi^{n,\varphi}\right)^{(\alpha-\beta)}(x)B_{\beta,\lambda}\left(\varphi_n'(x),\ldots,\varphi_n^{(\beta-\lambda+1)}(x)\right)\right|\leq M^n.$$ 
		For the special case $\psi=1$, similar as in Remark~\ref{rem: one dimensional composition operators}, the above characterizations of ($m$-)topologizablity simplify to the two conditions
		\begin{itemize}
			\item there is $k>0$ such that for each $n\in\N$ there is $r_n>0$ such that $|\varphi_n(x)|\geq |x|^{1/k}$ for all $|x|\geq r_n$, and
			\item for every $\alpha\in\N$ there is $q>0$ (resp.\ there are $M,q>0$) with $\sup_{x\in\R}\frac{|\varphi_n^{(\alpha)}(x)|}{(1+|\varphi_n(x)|)^q}<\infty$ (resp.\ $\sup_{x\in\R}\frac{|\varphi_n^{(\alpha)}(x)|}{(1+|\varphi_n(x)|)^q}\leq M^n$) for every $n\in\N$.
		\end{itemize}
		\item[(ii)] Similarly as in Remark~\ref{rem: power bounded multiplication operators}, by Proposition \ref{cor: topologizability characterized}, for $\psi\in\mathscr{O}_M(\R^d)$ the corresponding multiplication operator $M_\psi$ is ($m$-)topologizable on $\mathscr{S}(\R^d)$ if and only if for every $\gamma\in\N_0^d$ there is $r>0$ (resp.\ there are $M,r>0$) such that $$\forall\,n\in\N:\,\sup_{x\in\R^d}(1+|x|)^{-r}\left|\left(\psi^n\right)^{(\gamma)}(x)\right|<\infty,$$
		respectively
		$$\forall\,n\in\N:\,\sup_{x\in\R^d}(1+|x|)^{-r}\left|\left(\psi^n\right)^{(\gamma)}(x)\right|\leq M^n$$
	\end{itemize}
\end{remark}
Analogously to the proof of Theorem \ref{CorollaryPowerBoundednessSmallDecay}, we can characterize topologizability and $m$-to\-po\-lo\-gi\-za\-bi\-li\-ty in a much more operable way.

\begin{theorem}\label{CorollaryTopSmallDecay}
	Let $\varphi: \mathbb{R}^d \to \mathbb{R}^d$ be a smooth mapping and let $\psi \in C^{\infty}(\mathbb{R}^d)$ be of small decay with respect to $\varphi$.
	Then, the following are equivalent.
	\begin{enumerate}
		\item[(i)] $C_{\psi,\varphi}$ acts on $\mathscr{S}(\R^d)$ and is topologizable (m-topologizable) .
		\item[(ii)] The following two conditions are satisfied:
		\begin{itemize}
			\item[(a)] For all $p>0$ and $\alpha \in \mathbb{N}_0^d$ there exists $q>0$ such that, for all $n\in\N$
			$$  \sup_{x \in \mathbb{R}^d} \frac{(1+|x|)^p}{(1+|\varphi_n(x)|)^q} |(\psi^{n, \varphi})^{(\alpha)}(x)| < \infty \ (<M^n \text{ for some } M>0). $$
			\item[(b)]    For all $\alpha \in \mathbb{N}_0^d$ there exists $q>0$ such that, for all $n\in\N$
			$$ \sup_{x \in \mathbb{R}^d} \frac{1}{(1+|\varphi_n(x)|)^q} |\varphi_n^{(\alpha)}(x)| < \infty  \ (<M^n \text{ for some } M>0). $$
		\end{itemize}
	\end{enumerate}
\end{theorem}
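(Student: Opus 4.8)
The plan is to follow the proof of Theorem~\ref{CorollaryPowerBoundednessSmallDecay} essentially verbatim, with Proposition~\ref{cor: topologizability characterized} playing the role that Theorem~\ref{cor: power boundedness characterized} plays there. Since $C_{\psi,\varphi}^{n}=C_{\psi^{n,\varphi},\varphi_{n}}$, Proposition~\ref{cor: topologizability characterized} says that (i) is equivalent to the family of ``$F$-conditions'': for all $\alpha,\lambda\in\N_0^d$ with $|\lambda|\leq|\alpha|$ and each $p>0$ there is $q>0$ such that $\sup_{x\in\R^d}\frac{(1+|x|)^p}{(1+|\varphi_n(x)|)^q}|\Fmultiterate(x)|<\infty$ for every $n$ (resp.\ $<M^n$ for some $M>0$). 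Hence it suffices to show that, given that $\psi$ is of small decay with respect to $\varphi$, this family of $F$-conditions is equivalent to (a) and (b).

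For the implication (ii)$\Rightarrow$(i) the small decay hypothesis is not needed (as in Remark~\ref{sufficientpb}): I would substitute (a) and (b) into the explicit formula \eqref{eq:definition F} for $\Fmultiterate$. Each summand there is a bounded multiple of $(\psi^{n,\varphi})^{(\alpha-\beta)}(x)$ times a product of at most $|\alpha|$ factors $\varphi_{n,i}^{(\ell_j)}(x)$ with $1\leq|\ell_j|\leq|\alpha|$, because the relevant $(k_j,\ell_j)$ satisfy $\sum_j|k_j|\leq\sum_j|k_j|\,|\ell_j|=|\beta|\leq|\alpha|$. Estimating the first factor by (a) for the given $p$ and the remaining ones by (b) yields, with a single exponent $q$ (the sum of the finitely many exponents produced by (a) and (b), hence independent of $n$), the bound $\frac{(1+|x|)^p}{(1+|\varphi_n(x)|)^q}|\Fmultiterate(x)|\leq C_n<\infty$ for each $n$; in the $m$-topologizable case $C_n$ is a product of boundedly many quantities each $<M_i^n$, hence $<M^n$ for a suitable $M$. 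By Proposition~\ref{cor: topologizability characterized} this gives (i), and the $n=1$ instances of (a) and (b) already yield via Corollary~\ref{sufficient} that $C_{\psi,\varphi}$ acts on $\mathscr{S}(\R^d)$.

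For (i)$\Rightarrow$(ii), condition (a) comes for free: specialising the $F$-conditions of Proposition~\ref{cor: topologizability characterized} to $\lambda=0$ and using that $\Fmultiterate(x)=(\psi^{n,\varphi})^{(\alpha)}(x)$ for $\lambda=0$ gives (a) verbatim, in both the topologizable and the $m$-topologizable version, again with no recourse to small decay. The necessity of (b) is where the small decay hypothesis enters, and here I would transcribe the argument of Theorem~\ref{CorollaryPowerBoundednessSmallDecay}: if (b) fails for some $\alpha$, then \eqref{eq: techlemma2, assumotion 2} holds (for all $q$) along a suitable, necessarily infinite, subfamily $I\subseteq\N$ of iteration indices; since $\psi$ being of small decay with respect to $\varphi$ is designed to supply the hypothesis \eqref{eq: techlemma2, assumption on psi} for the family $\{(\psi^{n,\varphi},\varphi_n):n\in I\}$, Lemma~\ref{techlemma2} produces an $f\in\mathscr{S}(\R^d)$ incompatible with the equicontinuity property encoded in topologizability, contradicting (i). The $m$-topologizable case is identical, carrying the factors $M^n$ along throughout.

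The step I expect to require the most care is precisely this necessity of (b), for two intertwined reasons. First, for topologizability the negation of (b) is the failure of the quantifier pattern ``for each $\alpha$ there is a single $q$ valid for all $n$'', and this must be converted into a genuine violation of Proposition~\ref{cor: topologizability characterized} (some $(\alpha,\lambda,p)$ for which, for every $q$, the corresponding supremum equals $+\infty$ for some $n$), not merely an unboundedness over $n$ of those suprema — the latter being neutralised by the freedom to choose the scaling sequence $(a_n)$. Second, one must check that the small decay of $\psi$ really does hand the hypothesis \eqref{eq: techlemma2, assumption on psi} to the selected family of iterates. Both points amount to the bookkeeping already carried out in the proof of Theorem~\ref{CorollaryPowerBoundednessSmallDecay}, which I would reproduce mutatis mutandis with $C_{\psi,\varphi}$ topologizable (resp.\ $m$-topologizable) in place of $C_{\psi,\varphi}$ power bounded.
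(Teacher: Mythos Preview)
Your proposal matches the paper's approach: the paper's entire proof of Theorem~\ref{CorollaryTopSmallDecay} is the one-line remark that it is obtained ``analogously to the proof of Theorem~\ref{CorollaryPowerBoundednessSmallDecay}'', and you spell this out by replacing Theorem~\ref{cor: power boundedness characterized} with Proposition~\ref{cor: topologizability characterized} and otherwise following the same Lemma~\ref{techlemma2}/Theorem~\ref{theo: small decay} pattern. Your sketch is in fact more detailed than the paper's, and the two concerns you flag in the final paragraph are exactly the points where both your proposal and the paper rely on ``mutatis mutandis'' rather than an explicit argument.
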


\begin{remark}
	\label{remarktop}
	Condition (a) in Theorem \ref{CorollaryTopSmallDecay} is necessary for $C_{\psi,\varphi}$ to be topologizable ($m$-topologizable) without the assumption that $\psi$ is of small decay with respect to $\varphi$. Additionally, conditions (a) and (b) are always sufficient for topologizability ($m$-topologizability) of $C_{\psi,\varphi}$.
\end{remark}

\begin{example}
	\label{ex1}
	Fix $\psi \equiv 1$. From Remark~\ref{rem: topologizability}(i), observe that if $\varphi$ is a polynomial, then $C_{\varphi}:\mathscr{S}(\mathbb{R}) \to \mathscr{S}(\mathbb{R})$ is topologizable if and only if $C_{\varphi}$ is well defined if and only if $\deg(\varphi) \geq 1$. On the other hand, for $\varphi(x)=x+1$, $x\in\R$,  $n\in\N$, $p\geq 1$ it holds
	$$\sup_{x\in\R}\frac{(1+|x|)^p}{(1+|\varphi_n(x)|)^p}=\sup_{x\in\R}\left(\frac{1+|x|}{1+|x+n|}\right)^p= (1+|-n|)^p\leq (2^{p})^n. $$
	Hence condition (a) in Theorem \ref{CorollaryTopSmallDecay} is satisfied. Since condition (b) in Theorem \ref{CorollaryTopSmallDecay} is trivially satisfied, the corresponding composition operator, the so-called translation operator $C_{x+1}$, is $m$-topologizable but not power bounded, because by the same computation as above it does not satisfy condition (a) of Theorem \ref{CorollaryPowerBoundednessSmallDecay}, as observed in
	\cite[Remark 2]{FeGaJo18}.
	
\end{example}

\begin{example}
	Let $\varphi: \mathbb{R} \to \mathbb{R}$, $\varphi(x)=ax+b$, $a,b\in\R,$ $a\notin \{0,1\}$. Then $C_{\varphi}$ is $m$-topologizable. In fact, it follows from Theorem \ref{CorollaryTopSmallDecay} and 
	$$\lim_n \frac{1}{|a|^n}\sup_{x\in\R}\frac{1+|x|}{1+\left|a^nx+\frac{b(1-a^n)}{1-a}\right|}=\frac{|b|}{|a-1|}, \quad \text{if } |a|>1$$

	\noindent and 
	
	$$\lim_n |a|^n\sup_{x\in\R}\frac{1+|x|}{1+\left|a^nx+\frac{b(1-a^n)}{1-a}\right|}=\max\left\{1,\frac{|b|}{|1-a|}\right\}, \quad \text{if } 0<|a|<1.$$
	
	Finally, for $a=-1$, we have $C_\varphi^2=\text{id}_{\mathscr{S}(\R)}$.
\end{example}

\begin{example}\label{Example: sqrt of 1+x2}
	Let $\varphi(x)=\sqrt{1+x^2}$, and let $\psi$ be a fixed non-null polynomial, which is of small decay with respect to $\varphi$. Then $C_{\varphi}:\mathscr{S}(\mathbb{R}) \to \mathscr{S}(\mathbb{R})$ is power bounded by~\cite[Example 2]{FeGaJo18}. We show that for $C_{\psi,\varphi}:\mathscr{S}(\mathbb{R}) \to \mathscr{S}(\mathbb{R})$ topologizable and $m$-topologizability are equivalent and that this holds precisely when $\psi$ is constant. Moreover, $C_{\psi,\varphi}: \mathscr{S}(\mathbb{R}) \to \mathscr{S}(\mathbb{R})$ is power bounded if, and only, if $\psi \equiv c$ for some $|c| \leq 1$.
	
	Indeed, since $C_\varphi$ is power bounded, by Remark~\ref{powerboundedcomposition} and Theorem~\ref{CorollaryTopSmallDecay}, ($m$-)topologizability of $C_{\psi,\varphi}$ is equivalent to condition (a) from Theorem~\ref{CorollaryTopSmallDecay} (ii).
	
	Hence, if $\psi\equiv c\in\C$, condition (a) for $m$-topologizability from Theorem~\ref{CorollaryTopSmallDecay} (ii) holds.
	
	On the other hand, if $\deg(\psi) \geq 1$, for every $q \in \mathbb{N}$, we find $n=q+1$ so that
	$$ \sup_{x \in \mathbb{R}} \frac{|\psi^{n,\varphi}(x)|}{(1+|\varphi_n(x)|)^q} = \sup_{x \in \mathbb{R}} \frac{|\psi(x) \psi(\varphi(x)) \cdots \psi(\varphi_{n-1}(x))|}{(1+\sqrt{n+x^2})^q} = \infty, $$
	since the numerator has degree $n \cdot \deg(\psi)=(q+1) \cdot \deg(\psi)$ and the denominator, $q$. Therefore $C_{\psi,\varphi}$ is not topologizable (and not power bounded).

	Finally, fix $\psi \equiv c$. If $|c|\leq 1$,  then, $C_{\psi,\varphi}$ is power bounded. Otherwise, for all $q>0$,
	$$ \sup_{n \in \mathbb{N}} \sup_{x \in \mathbb{R}} \frac{|\psi^{n,\varphi}(x)|}{(1+|\varphi_n(x)|)^q} = \sup_{n \in \mathbb{N}} \sup_{x \in \mathbb{R}} \frac{|\psi(x) \psi(\varphi(x)) \cdots \psi(\varphi_{n-1}(x))|}{(1+\sqrt{n+x^2})^q} \geq \sup_{n \in \mathbb{N}} \frac{c^n}{(1+\sqrt{n})^q} = \infty, $$
	so $C_{\psi,\varphi}$ is not power bounded.
	
\end{example}

\section{Power boundedness of weighted composition operators for polynomials}\label{sec:polynomials}
\subsection{Power boundedness of $C_{\psi,\varphi}$ on $\mathscr{S}(\R)$ for $\varphi(x)=ax+b$.}\label{subsec:degree 1}

In this subsection we study univariate polynomials of degree one. For $a\in\R\backslash\{0\}, b\in\R$, as is well known, and may be checked by Remark~\ref{rem: one dimensional composition operators}, $\varphi(x)=ax+b$ is a symbol for $\mathscr{S}(\R)$. For the special case $\varphi(x)=x$ the composition operator $C_{\psi,\varphi}$ is the multiplication operator $M_\psi$. Power boundedness of multiplication operators in one variable has been characterized by Albanese and Mele (cf.~Remark~\ref{rem: power bounded multiplication operators}). Furthermore, from the fact that, for any locally convex space $E$, a continuous linear operator $T\in \mathcal{L}(E)$ is power bounded if and only if $T^2$ is, the next proposition follows immediately.

\begin{proposition}
	Let $\varphi(x)=-x+b$ and $\psi \in \mathscr{O}_M(\mathbb{R})$. The composition operator $C_{\psi,\varphi}$ is power bounded if and only if $((\psi \cdot (\psi\circ\varphi))^n)_{n\in\N}$ is a bounded sequence in $\mathscr{O}_M(\R)$.
\end{proposition}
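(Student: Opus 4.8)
The plan is to reduce the statement to the already-established characterization of power boundedness of multiplication operators on $\mathscr{S}(\R)$ (Remark~\ref{rem: power bounded multiplication operators}), exploiting that $\varphi(x)=-x+b$ is an involution. First I would compute the second iterate: $\varphi_2(x)=\varphi(\varphi(x))=-(-x+b)+b=x$, so $\varphi_2=\operatorname{id}_\R$. Using the identity $C_{\psi,\varphi}^n=C_{\psi^{n,\varphi},\varphi_n}$ already recorded in the proof of Theorem~\ref{cor: power boundedness characterized}, and $\psi^{2,\varphi}=\psi\cdot(\psi\circ\varphi)$, this gives
$$C_{\psi,\varphi}^2=C_{\psi^{2,\varphi},\varphi_2}=C_{\psi\cdot(\psi\circ\varphi),\operatorname{id}_\R}=M_{\psi\cdot(\psi\circ\varphi)}.$$
Alongside this I would note that $C_{\psi,\varphi}$ does act continuously on $\mathscr{S}(\R)$: the affine map $\varphi$ is a symbol for $\mathscr{S}(\R)$ (Remark~\ref{rem: one dimensional composition operators}) and $M_\psi$ acts on $\mathscr{S}(\R)$ since $\psi\in\mathscr{O}_M(\R)$, so $C_{\psi,\varphi}=M_\psi\circ C_\varphi$ is a well-defined continuous operator.

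Next I would check that $g:=\psi\cdot(\psi\circ\varphi)\in\mathscr{O}_M(\R)$, so that Remark~\ref{rem: power bounded multiplication operators} is applicable to $M_g$. This is immediate: $\mathscr{O}_M(\R)$ is a subalgebra of $C^\infty(\R)$, and it is stable under composition with $\varphi$ because $(\psi\circ\varphi)^{(k)}(x)=(-1)^k\psi^{(k)}(-x+b)$ is polynomially bounded whenever $\psi^{(k)}$ is (as $|-x+b|\leq|x|+|b|$). Hence $g$ is a product of two elements of $\mathscr{O}_M(\R)$.

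The remaining ingredient is the general fact, valid on any locally convex Hausdorff space, that a continuous linear operator $T$ is power bounded if and only if $T^2$ is: equicontinuity of $\{T^{2n}:n\in\N\}$ together with continuity of $T$ yields equicontinuity of $\{T\circ T^{2n}:n\in\N_0\}$, and the union of this family with $\{T^{2n}:n\in\N_0\}$ is again equicontinuous and contains every $T^m$. Applying this to $T=C_{\psi,\varphi}$, combining with $C_{\psi,\varphi}^2=M_g$, and then invoking Remark~\ref{rem: power bounded multiplication operators} for $g$, one concludes that $C_{\psi,\varphi}$ is power bounded if and only if $(g^n)_{n\in\N}=((\psi\cdot(\psi\circ\varphi))^n)_{n\in\N}$ is bounded in $\mathscr{O}_M(\R)$, which is the assertion.

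I do not expect a genuine obstacle here; the whole content sits in the identification $C_{\psi,\varphi}^2=M_{\psi\cdot(\psi\circ\varphi)}$ and in quoting the multiplication-operator characterization. The only points deserving an explicit line are verifying $\psi\cdot(\psi\circ\varphi)\in\mathscr{O}_M(\R)$ (so that the cited remark applies) and spelling out the ``$T$ power bounded $\iff$ $T^2$ power bounded'' equivalence in the locally convex setting, both of which are routine.
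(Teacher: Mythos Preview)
Your proposal is correct and follows essentially the same approach as the paper: the paper's proof is a one-line remark that the proposition ``follows immediately'' from the general fact that $T$ is power bounded if and only if $T^2$ is, which is precisely your reduction of $C_{\psi,\varphi}^2$ to the multiplication operator $M_{\psi\cdot(\psi\circ\varphi)}$ followed by the characterization in Remark~\ref{rem: power bounded multiplication operators}. Your version simply makes explicit the intermediate verifications (that $\varphi$ is an involution, that $\psi\cdot(\psi\circ\varphi)\in\mathscr{O}_M(\R)$, and the equicontinuity argument for the $T\Leftrightarrow T^2$ equivalence) which the paper leaves to the reader.
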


By the same argument as in the corresponding part of Example \ref{Example: sqrt of 1+x2}, we derive the next proposition.

\begin{proposition}
	\label{constant}
	Let $\varphi:\R\to\R$, $x\mapsto ax+b$, $a,b\in\R, a\neq 0,$ and let $\psi\in C^\infty(\R)$ be a non constant polynomial. Then $C_{\psi,\varphi}$ is not topologizable.
\end{proposition}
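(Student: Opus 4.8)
The plan is to invoke the characterization of topologizability in Proposition~\ref{cor: topologizability characterized} and to exploit that, for an affine non-constant $\varphi$, both the iterates $\varphi_n$ and the weights $\psi^{n,\varphi}$ are genuine polynomials whose degrees can be read off directly. First I would record that $C_{\psi,\varphi}$ does act continuously on $\mathscr{S}(\R)$: since $\varphi(x)=ax+b$ with $a\neq0$ is a symbol for $\mathscr{S}(\R)$ and $\psi$, being a polynomial, belongs to $\mathscr{O}_M(\R)$, one has $C_{\psi,\varphi}=M_\psi\circ C_\varphi$ (equivalently, one may quote the last paragraph of Remark~\ref{rem: multiplication}). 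Consequently, by Proposition~\ref{cor: topologizability characterized}, $C_{\psi,\varphi}$ is topologizable if and only if condition (ii) there holds, and hence, to show that $C_{\psi,\varphi}$ is not topologizable, it suffices to exhibit one pair $\alpha,\lambda\in\N_0$ with $|\lambda|\leq|\alpha|$ and one $p>0$ such that for every $q>0$ there is $n\in\N$ with $\sup_{x\in\R}\frac{(1+|x|)^{p}}{(1+|\varphi_n(x)|)^{q}}\bigl|F_{\alpha,\lambda}^{\varphi_n,\psi^{n,\varphi}}(x)\bigr|=\infty$. I would take $\alpha=\lambda=0$ and $p=1$; recalling from the discussion following Theorem~\ref{theo: weighted compositions characterized} that $F_{0,0}^{\varphi,\psi}=\psi$, this reduces the task to showing that for every $q>0$ there is $n$ with $\sup_{x\in\R}\frac{(1+|x|)\,|\psi^{n,\varphi}(x)|}{(1+|\varphi_n(x)|)^{q}}=\infty$.

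The core of the proof is then a degree count. Since $\varphi$ is affine, $\varphi_n(x)=a^nx+c_n$ for a suitable constant $c_n\in\R$, so $\varphi_n$ is affine with nonzero leading coefficient, and hence $(1+|\varphi_n(x)|)^{q}$ grows like $|x|^{q}$ as $|x|\to\infty$. On the other hand, $\psi^{n,\varphi}(x)=\prod_{j=0}^{n-1}\psi(\varphi_j(x))$ is a polynomial of degree exactly $n\deg(\psi)$: each factor $\psi\circ\varphi_j$ has degree $\deg(\psi)$ with nonzero leading coefficient, and the leading coefficient of a product of polynomials is the product of the leading coefficients, hence nonzero --- this is the same computation already used in the ``$\deg(\psi)\geq1$'' part of Example~\ref{Example: sqrt of 1+x2}. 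Consequently the numerator $(1+|x|)\,|\psi^{n,\varphi}(x)|$ grows like $|x|^{\,1+n\deg(\psi)}$ as $|x|\to\infty$, and therefore
$$\sup_{x\in\R}\frac{(1+|x|)\,\bigl|\psi^{n,\varphi}(x)\bigr|}{(1+|\varphi_n(x)|)^{q}}=\infty\qquad\text{whenever }\ 1+n\deg(\psi)>q .$$

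To conclude, given an arbitrary $q>0$ I would pick $n\in\N$ with $n>(q-1)/\deg(\psi)$, which is possible precisely because $\psi$ is non-constant, i.e.\ $\deg(\psi)\geq1$; then $1+n\deg(\psi)>q$ and the supremum above is infinite. Thus condition (ii) of Proposition~\ref{cor: topologizability characterized} fails for $\alpha=\lambda=0$ and $p=1$, so $C_{\psi,\varphi}$ is not topologizable. I do not expect any genuine obstacle: the whole argument is a one-line degree comparison, and the only step deserving a sentence of justification is the identity $\deg(\psi^{n,\varphi})=n\deg(\psi)$, which is immediate from multiplicativity of leading coefficients and has, in effect, already been carried out in Example~\ref{Example: sqrt of 1+x2}.
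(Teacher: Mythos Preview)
Your proof is correct and follows essentially the same approach as the paper, which simply refers to ``the same argument as in the corresponding part of Example~\ref{Example: sqrt of 1+x2}'': a degree comparison showing that $\psi^{n,\varphi}$ has degree $n\deg(\psi)$ while $(1+|\varphi_n(x)|)^q$ grows only like $|x|^q$, so the relevant supremum is infinite once $n\deg(\psi)>q$. The only cosmetic difference is that you invoke Proposition~\ref{cor: topologizability characterized} directly with $\alpha=\lambda=0$, whereas the paper's route via Example~\ref{Example: sqrt of 1+x2} goes through Theorem~\ref{CorollaryTopSmallDecay}/Remark~\ref{remarktop}; the substantive computation is identical.
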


\begin{proposition}
	Let $\varphi:\R\to\R$, $x\mapsto x+b$, $b\in\R\setminus\{0\}$ and let $\psi\in C^\infty(\R)$ be a polynomial. Then $C_{\psi,\varphi}$ is power bounded if and only if there is  $c\in\C$, $|c|<1$ such that $\psi(x)=c$ for all $x\in\R$. In this case, the sequence $(C_{\psi,\varphi}^n)$ is convergent to 0 in $\mathcal{L}_b(\mathscr{S}(\R))$.
\end{proposition}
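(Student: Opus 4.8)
The plan is to apply the one-dimensional power boundedness characterization, Corollary~\ref{cor: one dimensional power boundedness characterized}, with $\varphi(x)=x+b$, $b\neq 0$. Here the iterates are $\varphi_n(x)=x+nb$, so $\varphi_n'(x)=1$ and $\varphi_n^{(k)}(x)=0$ for $k\geq 2$; consequently all the Bell polynomial terms $B_{\beta,\lambda}(\varphi_n',\ldots,\varphi_n^{(\beta-\lambda+1)})$ vanish unless $\beta=\lambda$, in which case they equal $1$. Thus power boundedness of $C_{\psi,\varphi}$ reduces to the requirement that for every $\alpha\in\N_0$ and every $p>0$ there is $q>0$ with
$$\sup_{n\in\N}\sup_{x\in\R}\frac{(1+|x|)^p}{(1+|x+nb|)^q}\left|(\psi^{n,\varphi})^{(\alpha)}(x)\right|<\infty,$$
where $\psi^{n,\varphi}(x)=\prod_{j=0}^{n-1}\psi(x+jb)$. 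Note also that $\psi$ being a polynomial is automatically in $\mathscr{O}_M(\R)$, and $\varphi(x)=x+b$ is a symbol for $\mathscr{S}(\R)$ (Remark~\ref{rem: one dimensional composition operators}), so $C_{\psi,\varphi}$ always acts on $\mathscr{S}(\R)$.

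For the ``if'' direction, suppose $\psi\equiv c$ with $|c|<1$. Then $\psi^{n,\varphi}\equiv c^n$, all derivatives of order $\alpha\geq 1$ vanish, and for $\alpha=0$ the supremum above is bounded by $|c|^n\sup_x (1+|x|)^p/(1+|x+nb|)^q$. Choosing $q=p$ and using $\frac{1+|x|}{1+|x+nb|}\leq 1+n|b|$, this is at most $|c|^n(1+n|b|)^p\to 0$; in fact the same estimate, now applied to the operator norm of $C_{\psi,\varphi}^n=C_{c^n,\varphi_n}$ on each $\mathscr{S}_{p,\alpha}(\R)$, shows $C_{\psi,\varphi}^n\to 0$ in $\mathcal{L}_b(\mathscr{S}(\R))$, giving the last claim. (One should verify via Lemma~\ref{theo: equicontinuity of weighted composition operators}(a) that the operator norm of $C_{c^n,\varphi_n}$ from $\mathscr{S}$ into $\mathscr{S}_{p,\alpha}$ is controlled by $|c|^n(1+n|b|)^p$, which follows from \eqref{eq: Leibniz and Faa di Bruno} since $F^{\varphi_n,c^n\psi^{n,\varphi}}_{\alpha,\lambda}$ is $c^n$ times a constant.)

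For the ``only if'' direction, assume $C_{\psi,\varphi}$ is power bounded. First, by Proposition~\ref{constant} (with $a=1$), a non-constant polynomial $\psi$ makes $C_{\psi,\varphi}$ non-topologizable, hence not power bounded; so $\psi\equiv c$ for some $c\in\C$. It then remains to show $|c|<1$. With $\psi\equiv c$ we have $\psi^{n,\varphi}\equiv c^n$, and taking $\alpha=0$, $p=1$, the characterization demands some $q$ with
$$\sup_{n\in\N}\sup_{x\in\R}\frac{(1+|x|)}{(1+|x+nb|)^q}|c|^n<\infty.$$
Evaluating at $x=-nb$ (where $1+|x|=1+n|b|$ and the denominator is $1$) forces $\sup_n (1+n|b|)|c|^n<\infty$, which is impossible if $|c|\geq 1$. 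Hence $|c|<1$, completing the proof. The only mild subtlety is the bookkeeping in the second paragraph translating the seminorm estimate into convergence in $\mathcal{L}_b(\mathscr{S}(\R))$, but since $\mathscr{S}(\R)$ is Fréchet and the bounds are uniform over the generating seminorms this is routine; the genuine content is simply the observation that translation has derivative $1$ so only the multiplicative factor $c^n$ survives.
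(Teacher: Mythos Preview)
Your proof is correct and follows essentially the same approach as the paper: reduce to the constant case via Proposition~\ref{constant}, then test the power boundedness condition at $x=-nb$ to force $|c|<1$, and for sufficiency use $\frac{1+|x|}{1+|x+nb|}\leq 1+n|b|$ with $q=p$. The only differences are cosmetic: the paper invokes Theorem~\ref{CorollaryPowerBoundednessSmallDecay} (the small-decay characterization, which is available since a nonzero polynomial is of small decay with respect to $\varphi$) rather than unpacking Corollary~\ref{cor: one dimensional power boundedness characterized} directly, and it obtains $C_{\psi,\varphi}^n\to 0$ in $\mathcal{L}_b(\mathscr{S}(\R))$ from pointwise convergence together with the Montel property rather than from your explicit seminorm estimate. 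One small typo: in your parenthetical you write $F^{\varphi_n,c^n\psi^{n,\varphi}}_{\alpha,\lambda}$, but since $\psi^{n,\varphi}=c^n$ already, this should simply be $F^{\varphi_n,\psi^{n,\varphi}}_{\alpha,\lambda}$ (or $F^{\varphi_n,c^n}_{\alpha,\lambda}$).
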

\begin{proof}
	Without loss of generality, we assume $\psi\neq 0$. Since $\psi$ is a polynomial it is of small decay with respect to $\varphi$. Condition (b) of Theorem~\ref{CorollaryPowerBoundednessSmallDecay} (ii) is trivially satisfied.
	
	Assume that $C_{\psi,\varphi}$ is power bounded. By Proposition \ref{constant} $\psi$ has to be constant, $\psi=c$. In case of $|c|=1$ we know that $C_\varphi$ is not power bounded \cite[Remark 2]{FeGaJo18}. Additionally, by condition (a) in Theorem~\ref{CorollaryPowerBoundednessSmallDecay} (ii), for some $q>0$  we have
	$$\infty>\sup_{n\in\N}\sup_{x\in\R}|c|^{n}\frac{(1+|x|)}{(1+|\varphi_n(x)|)^q}\geq  \sup_{n\in\N}|c|^{n}(1+|bn|)$$
	which implies $|c|<1$. Therefore, the sequence $(C_{\psi,\varphi}^n)$ of iterates is clearly pointwise convergent to 0 and because $\mathscr{S}(\R)$ is  a Montel space, we conclude that $(C_{\psi,\varphi}^n)$ is convergent to 0 in  $\mathcal{L}_b(\mathscr{S}(\R))$.
	
	On the other hand, if $\psi$ is constant, $\psi=c$ with $|c|<1$, for $p>0$ we have 
	$$\sup_{n\in\N}\sup_{x\in\R}|c|^{n}\frac{(1+|x|)^p}{(1+|\varphi_n(x)|)^p}=\sup_{n\in\N}\sup_{x\in\R}|c|^n\left(\frac{1+|x|}{1+|x+nb|}\right)^p\leq \sup_{n\in\N}|c|^{n}(1+|bn|)<\infty$$
	so that $C_{\psi,\varphi}$ is power bounded by Theorem~\ref{CorollaryPowerBoundednessSmallDecay}.
\end{proof}





\begin{proposition}
\label{prop:powerbounded for ax+b}
Let $\varphi:\R\to\R$, $x\mapsto ax+b$, $a,b\in\R$, $|a|\notin \{0,1\}$ and let $\psi$ be a non-null polynomial. Then $C_{\psi,\varphi}$ is not power bounded. In case $|a|>1$, $C_{\psi,\varphi}$ is not power bounded for every $\psi\in C^\infty(\R)$ which is of small decay with respect to $\varphi$.
\end{proposition}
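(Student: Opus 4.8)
The plan is to read off power boundedness from Theorem~\ref{CorollaryPowerBoundednessSmallDecay}, exploiting that $\varphi$ is affine. Writing $x^\ast:=\frac{b}{1-a}$ for the fixed point of $\varphi$ (well defined since $a\neq 1$), an easy induction gives $\varphi_n(x)=x^\ast+a^n(x-x^\ast)$, so each $\varphi_n$ is an affine bijection of $\R$ with $\varphi_n'\equiv a^n$ and $\varphi_n^{(\alpha)}\equiv 0$ for $\alpha\geq 2$, while $\psi^{n,\varphi}(x)=\prod_{j=1}^{n}\psi(\varphi_{j-1}(x))$. I would also record that every non-null polynomial $\psi$ is of small decay with respect to $\varphi$: taking $m\geq 1$ larger than the modulus of every real zero of $\psi$, the map $x\mapsto(1+|x|)^m(1+|\varphi(x)|)^m|\psi(x)|$ is continuous, strictly positive on $\{|x|\geq m\}$, and tends to $+\infty$ as $|x|\to\infty$, hence it is bounded below there by a positive constant. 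Thus Theorem~\ref{CorollaryPowerBoundednessSmallDecay} applies whenever $\psi$ is a polynomial (for which $C_{\psi,\varphi}$ indeed acts on $\mathscr{S}(\R)$, e.g.\ by Remark~\ref{rem: multiplication}), as well as in the setting of the second assertion.

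First suppose $|a|>1$; this handles the second assertion and the first assertion for $|a|>1$. Here I would show that condition~(b) of Theorem~\ref{CorollaryPowerBoundednessSmallDecay}(ii) fails for $\alpha=1$: since $\varphi_n$ is onto, $\inf_{x\in\R}|\varphi_n(x)|=0$, so for every $q>0$ and every $n\in\N$
\[
\sup_{x\in\R}\frac{|\varphi_n'(x)|}{(1+|\varphi_n(x)|)^q}=|a|^n ,
\]
which tends to $\infty$ as $n\to\infty$. Hence, by Theorem~\ref{CorollaryPowerBoundednessSmallDecay} (applicable because $\psi$ is of small decay with respect to $\varphi$), $C_{\psi,\varphi}$ is not power bounded.

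Now suppose $0<|a|<1$ and $\psi$ is a non-null polynomial; condition~(b) is harmless here since $|a|^n\to 0$, so I would instead violate condition~(a) of Theorem~\ref{CorollaryPowerBoundednessSmallDecay}(ii) for $\alpha=0$. If $d:=\deg\psi\geq 1$, then $\psi^{n,\varphi}$ is a polynomial of degree $nd$ (each factor $\psi\circ\varphi_{j-1}$ has degree $d$ and nonzero leading coefficient), so given any $q>0$ I pick $n$ with $nd>q$; as $|x|\to\infty$ the quotient
\[
\frac{(1+|x|)\,|\psi^{n,\varphi}(x)|}{(1+|\varphi_n(x)|)^q}
\]
behaves like a positive constant times $|x|^{1+nd-q}$ and so is unbounded in $x$, which means no $q$ can work for $p=1$. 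If $d=0$, say $\psi\equiv c\neq 0$, then $\psi^{n,\varphi}\equiv c^n$, and I would test condition~(a) at $x_n:=\varphi_n^{-1}(x^\ast+1)=x^\ast+a^{-n}$: here $|\varphi_n(x_n)|=|x^\ast+1|$ stays bounded while $|x_n|\geq|a|^{-n}-|x^\ast|\geq\tfrac12|a|^{-n}$ for all large $n$, so picking $p$ with $|a|^{p}<|c|$ gives, for every $q>0$ and all large $n$,
\[
\frac{(1+|x_n|)^{p}}{(1+|\varphi_n(x_n)|)^{q}}\,|c|^{n}\;\geq\;\frac{2^{-p}}{(1+|x^\ast+1|)^{q}}\left(\frac{|c|}{|a|^{p}}\right)^{n}\longrightarrow\infty .
\]
In both subcases condition~(a) fails, so $C_{\psi,\varphi}$ is not power bounded by Theorem~\ref{CorollaryPowerBoundednessSmallDecay}, completing the first assertion.

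I do not expect a real obstacle here: once Theorem~\ref{CorollaryPowerBoundednessSmallDecay} is available, everything reduces to the elementary asymptotics in the two displays. The only case needing a genuine idea is $\psi$ constant with $0<|a|<1$, where condition~(b) is useless ($|a|^n\to0$) and one must construct by hand a sequence violating condition~(a); the mechanism is that $\varphi_n^{-1}$ contracts strongly towards $x^\ast$, so along $x_n=x^\ast+a^{-n}$ the image $\varphi_n(x_n)$ stays fixed while $|x_n|$ grows like $|a|^{-n}$, eventually dominating the decay factor $|c|^n$ once $p$ is large enough.
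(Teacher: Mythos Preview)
Your proof is correct and follows essentially the same approach as the paper: both arguments invoke Theorem~\ref{CorollaryPowerBoundednessSmallDecay}, violating condition~(b) at $\alpha=1$ when $|a|>1$ and condition~(a) at $\alpha=0$ when $|a|<1$. The differences are cosmetic: for $|a|>1$ the paper evaluates at the fixed point $x_0$ rather than at a zero of $\varphi_n$; for $|a|<1$ the paper first cites Proposition~\ref{constant} to reduce to constant $\psi$ and then reduces to $0<a<1$ via the observation that $C_{\psi,\varphi}$ is power bounded iff $C_{\psi,\varphi}^2$ is, whereas you handle $\deg\psi\geq 1$ directly by the degree count (which is precisely the content of Proposition~\ref{constant}) and treat constant $\psi$ with the test points $x_n=x^\ast+a^{-n}$ without needing $a>0$.
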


\begin{proof}
We first consider the case $|a|>1$. Let  $x_0=\frac{-b}{a-1}$ be the unique fixed point of $\varphi$. We have

$$\frac{|\varphi_n'(x_0)|}{1+|\varphi_n(x_0)|}=\frac{|a|^n}{1+|x_0|}.$$

\noindent Hence, condition (b) from Theorem~\ref{CorollaryPowerBoundednessSmallDecay} (ii) does not hold.

Assume now $|a|<1$ and $\psi$ to be a non-null polynomial. By Proposition \ref{constant} we can reduce to the case $\psi(x)=c$, with $c\in\C\backslash\{0\}$. Since $C_{\psi,\varphi}$ is power bounded if and only if $C_{\psi,\varphi}^2$ is, we can assume  $0<a<1$. Let $p\geq 1$ such that $a^p<|c|$. On account of $\varphi_n(x)=a^nx+b\frac{1-a^n}{1-a}$, for each $q\geq p$, we have
$$\sup_{n\in\N}\sup_{x\in\R}\frac{(1+|x|)^p}{(1+|\varphi_n(x)|)^q}|c|^n\geq \sup_{n\in\N} \frac{\left(1+\frac{1}{a^n}\right)^p}{(1+|\varphi_n\left(\frac{1}{a^n}\right)|)^q}|c|^n\geq \sup_{n\in\N}\left( \frac{|c|}{a^p}\right )^n \frac{1}{\left(1+\left|1+b\frac{1-a^n}{1-a}\right|\right)^q}=\infty.$$
By Theorem~\ref{CorollaryPowerBoundednessSmallDecay}, $C_{\psi,\varphi}$ is not power bounded.  
\end{proof}

\subsection{Power boundedness of $C_{\psi,\varphi}$ with $\varphi$ being a polynomial with $\deg(\varphi)\geq 2$}

The main purpose of the current subsection is to prove Theorem~\ref{main} below which completements Proposition~\ref{prop:powerbounded for ax+b} with a statement for polynomial $\varphi$ of degree larger than one. For this, we first observe that Theorem~\ref{CorollaryPowerBoundednessSmallDecay} and Remark~\ref{powerboundedcomposition} immediately yield the following result.

\begin{proposition}

\label{weightedimplycomposition}
Let $\varphi: \mathbb{R}^d \to \mathbb{R}^d$ be a symbol for $\mathscr{S}(\R^d)$  such that there exists $k,l>0$ such that $|\varphi_n(x)|\geq |x|^k$ when $|x|\geq l, n\in\N$, and let $\psi \in C^{\infty}(\mathbb{R}^d)$ be of small decay with respect to $\varphi$. If $C_{\psi,\varphi}$ acts on $\mathscr{S}(\R^d)$ and is power bounded then $C_{\varphi}$ is also power bounded on  $\mathscr{S}(\R^d)$.

\end{proposition}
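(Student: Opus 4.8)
The plan is to deduce the power boundedness of $C_\varphi$ directly from the characterization of power boundedness for composition operators recalled in Remark~\ref{powerboundedcomposition}, using the power boundedness of $C_{\psi,\varphi}$ together with the small decay hypothesis on $\psi$ and the lower growth bound $|\varphi_n(x)|\geq|x|^k$ for $|x|\geq l$. By Remark~\ref{powerboundedcomposition} it suffices to verify conditions (a) and (b) there for $C_\varphi$. Condition (a) is immediate: the hypothesis already grants $k,l>0$ with $|\varphi_n(x)|\geq|x|^k$ for $|x|\geq l$ and all $n\in\N$, which is exactly condition (a). So the whole task reduces to establishing condition (b): for every $\alpha\in\N_0^d$ there are $C,q>0$ with $|\varphi_n^{(\alpha)}(x)|\leq C(1+|\varphi_n(x)|)^q$ for all $n\in\N$ and $x\in\R^d$.

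To get condition (b), I would invoke Theorem~\ref{CorollaryPowerBoundednessSmallDecay}: since $\psi$ is of small decay with respect to $\varphi$ and $C_{\psi,\varphi}$ acts on $\mathscr{S}(\R^d)$ and is power bounded, part (ii)(b) of that theorem holds verbatim, i.e.\ for every $\alpha\in\N_0^d$ there is $q>0$ with
$$\sup_{n\in\N}\sup_{x\in\R^d}\frac{1}{(1+|\varphi_n(x)|)^q}|\varphi_n^{(\alpha)}(x)|<\infty.$$
This is precisely condition (b) of Remark~\ref{powerboundedcomposition} (with $C$ the value of the supremum). Thus both (a) and (b) of Remark~\ref{powerboundedcomposition} are satisfied, which by that remark is equivalent to $C_\varphi$ acting on $\mathscr{S}(\R^d)$ and being power bounded.

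The only genuine subtlety — and the step I would present most carefully — is the role of the hypothesis $|\varphi_n(x)|\geq|x|^k$ for $|x|\geq l$. This hypothesis is what makes condition (a) of Remark~\ref{powerboundedcomposition} automatic; without it, power boundedness of $C_{\psi,\varphi}$ alone need not force the composition operator $C_\varphi$ to act on $\mathscr{S}(\R^d)$ at all, since the small decay of $\psi$ could compensate for a failure of $\varphi$ to push mass out to infinity (cf.\ Example~\ref{ex:power boundedness of exp}, where $\varphi=\exp$ is not even a symbol). So in the write-up I would emphasize that the lower bound on $|\varphi_n|$ is exactly the ingredient that, combined with Theorem~\ref{CorollaryPowerBoundednessSmallDecay}(ii)(b), yields both conditions of Remark~\ref{powerboundedcomposition}, and hence the conclusion. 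There is essentially no hard computation here; the content is entirely in threading the two previously established characterizations together, with the growth hypothesis supplying the missing half of condition (a).
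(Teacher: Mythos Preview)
Your proof is correct and follows exactly the same approach as the paper, which simply states that the result is an immediate consequence of Theorem~\ref{CorollaryPowerBoundednessSmallDecay} and Remark~\ref{powerboundedcomposition}. Your write-up makes explicit what the paper leaves implicit: the hypothesis $|\varphi_n(x)|\geq|x|^k$ for $|x|\geq l$ supplies condition (a) of Remark~\ref{powerboundedcomposition}, while the small decay assumption together with power boundedness of $C_{\psi,\varphi}$ yields condition (ii)(b) of Theorem~\ref{CorollaryPowerBoundednessSmallDecay}, which is condition (b) of Remark~\ref{powerboundedcomposition}.
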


\begin{lemma}\label{lemma: auxiliary lemma}
Let $\varphi:\R^d\rightarrow\R^d$ be a symbol for $\mathscr{S}(\R^d)$ such that   $C_\varphi$ is power bounded. Additionally, assume that $|\varphi_n|^2=o(|\varphi_{n+1}|)$ uniformly as $n\rightarrow\infty$, i.e.\ 
$$\forall\varepsilon>0\, \exists N\in\N\,\forall\,n\geq N, x\in \R^d:|\varphi_n(x)|^2\leq \varepsilon|\varphi_{n+1}(x)|.$$
\begin{itemize}
	\item[(i)] There is $k\geq 0$ such that for every $\psi\in\mathscr{O}_M(\R^d)$ and $m\in\N_0$ there are $M_m,q'_m>0$ such that for every $\beta\in\N_0^d$ with $|\beta|\leq m$ it holds
	\begin{equation}\label{eq:key inequality for main 1}
		\sup_{n\in\N}\sup_{|x|\geq k}|(\psi^{n,\varphi})^{(\beta)}(x)|\leq M_m(1+|\varphi_n(x)|)^{q'_m}.
	\end{equation}
	\item[(ii)] Assume that
	\begin{equation}
		\label{eq: additional hypothesis}
		\exists n_0\in\N, c>0\,\forall\,n\geq n_0, x\in\R^d:|\varphi_n(x)|\geq c.
	\end{equation}
	Then, for every $\psi\in\mathscr{O}_M(\R^d)$ and $m\in\N_0$ there are $M_m,q'_m>0$ such that for every $\beta\in\N_0^d$ with $|\beta|\leq m$ it holds
	\begin{equation}\label{eq:key inequality for main 2}
		\sup_{n\geq n_0}\sup_{x\in\R^d}|(\psi^{n,\varphi})^{(\beta)}(x)|\leq M_m(1+|\varphi_n(x)|)^{q'_m}.
	\end{equation}
\end{itemize}
\end{lemma}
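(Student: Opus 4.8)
The plan is to reduce everything to a growth estimate for the product $\psi^{n,\varphi}(x)=\prod_{j=0}^{n-1}\psi(\varphi_j(x))$ and its derivatives. Since $C_\varphi$ is power bounded, Remark~\ref{powerboundedcomposition} supplies $k_0,l>0$ with $|\varphi_n(x)|\geq|x|^{k_0}$ for $|x|\geq l$ and all $n$, and, for every $\alpha\in\N_0^d$, constants $C_\alpha,q_\alpha>0$ with $|\varphi_n^{(\alpha)}(x)|\leq C_\alpha(1+|\varphi_n(x)|)^{q_\alpha}$ for all $n,x$. Put $a_j:=|\varphi_j(x)|$ and fix the threshold $k:=\max\{l,(\max\{2,l\})^{1/k_0}\}$, which depends only on $\varphi$; then $|x|\geq k$ forces $a_j\geq M_0:=\max\{2,l\}>1$ for every $j\geq1$, and applying the growth bound for $\varphi_{n-j}^{(0)}$ at the point $\varphi_j(x)$ gives $a_j\leq a_n^{1/k_0}$ for all $0\leq j<n$.

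The crucial point is to turn the soft hypothesis $|\varphi_n|^2=o(|\varphi_{n+1}|)$ into an effective, $x$-uniform, doubly exponential lower bound for $|\varphi_n|$. Applying it with $\eps=1/2$ yields $N$ with $a_{j+1}\geq 2a_j^2\geq a_j^2$ for $j\geq N$; setting $N':=\max\{N,1\}$, the sequence $(\log a_j)_{j\geq N'}$ then at least doubles and starts at $\log a_{N'}\geq\log M_0>0$, so that $a_n\geq M_0^{2^{n-N'}}$ for $n\geq N'$, uniformly in $|x|\geq k$. The same inequality $a_{j+1}\geq a_j^2$ also gives $a_{n-i}\leq a_n^{1/2^i}$ whenever $n-i\geq N$. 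For part~(ii), running the hypothesis with $\eps=c/2$ and combining with $|\varphi_n|\geq c$ for $n\geq n_0$ first produces $a_{n+1}\geq 2a_n$ for $n$ large, hence $a_n\geq c\,2^{n-n_1}\to\infty$ uniformly in $x$; feeding this into the previous argument then gives $a_n\geq M_0^{2^{n-n_2}}$ for \emph{all} $x$ once $n\geq n_2$, for a suitable $n_2$.

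With these two ingredients in hand, \eqref{eq:key inequality for main 1} becomes a bookkeeping exercise. Fix $\psi\in\mathscr{O}_M(\R^d)$ and $m$, and choose $r,C\geq1$ with $|\psi^{(\mu)}(y)|\leq C(1+|y|)^r$ for all $|\mu|\leq m$. Expanding $(\psi^{n,\varphi})^{(\beta)}=(\prod_{j=0}^{n-1}\psi\circ\varphi_j)^{(\beta)}$ by Leibniz' rule, at most $|\beta|\leq m$ of the $n$ factors are differentiated, the number of resulting terms is polynomial in $n$, each undifferentiated factor is $\leq C(1+a_j)^r$, and, by Fa\`a di Bruno together with the bounds on $\psi^{(\mu)}$ and on the $\varphi_j^{(\alpha)}$, each differentiated factor is $\leq C_m(1+a_j)^{p_m}$ for a suitable $p_m$. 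One then bounds, for $|x|\geq k$, the ``head'' $\prod_{j<N'}(1+a_j)^r$ by a power of $a_n$ via $a_j\leq a_n^{1/k_0}$, and the ``tail'' $\prod_{N'\leq j<n}(1+a_j)^r$ by $2^{rn}a_n^{r}$ via $a_{n-i}\leq a_n^{1/2^i}$; all remaining factors are then of size $e^{O(n)}$ and are swallowed by a single extra power of $a_n$ thanks to the doubly exponential lower bound, which settles all $n\geq n_2$. The finitely many smaller $n$ are handled by the same expansion, the prefactors now being bounded over a finite range of indices.

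Part~(ii) only needs, in addition, a bound for $(\psi^{n,\varphi})^{(\beta)}$ on the compact set $\{|x|\leq k\}$ for $n\geq n_0$. For this I would use the identity $\psi^{n,\varphi}=\psi^{n_0,\varphi}\cdot(\psi^{n-n_0,\varphi}\circ\varphi_{n_0})$, valid for $n\geq n_0$: differentiating and estimating the derivatives of the \emph{fixed} functions $\psi^{n_0,\varphi}$ and $\varphi_{n_0}$ on the compact set $\{|x|\leq k\}$ by constants yields $|(\psi^{n,\varphi})^{(\beta)}(x)|\leq C_m\max_{|\mu|\leq m}|(\psi^{n-n_0,\varphi})^{(\mu)}(\varphi_{n_0}(x))|$. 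Iterating this $\ell$ times replaces $n$ by $n-\ell n_0$ and $x$ by $\varphi_{\ell n_0}(x)$; since $|\varphi_{\ell n_0}(x)|\geq c\,2^{\ell n_0-n_1}\geq k$ once $\ell$ exceeds a fixed $\ell_0$ (uniformly in $|x|\leq k$, by the geometric growth from~(ii)), after at most $\ell_0$ steps the point $\varphi_{\ell n_0}(x)$ lies outside $\{|\cdot|<k\}$, where the estimate of the previous paragraph applies --- with $\varphi_{n-\ell n_0}(\varphi_{\ell n_0}(x))=\varphi_n(x)$ --- and the bounded set of remaining small-$n$ cases is again dealt with directly, giving~\eqref{eq:key inequality for main 2}. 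I expect the genuine difficulty to be exactly the first step: extracting, from the purely asymptotic ``$o$''-hypothesis, the hard uniform doubly exponential growth of $|\varphi_n|$ (and, under~\eqref{eq: additional hypothesis}, the preliminary upgrade of $|\varphi_n|\geq c$ to uniform geometric growth) that renders the crude exponential-in-$n$ losses in the product estimate, and the finite iteration on the compact core, harmless.
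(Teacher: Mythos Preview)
Your proof is correct but takes a genuinely different route from the paper's. The paper argues by a one-step induction: writing $\psi^{n+1,\varphi}=\psi^{n,\varphi}\cdot(\psi\circ\varphi_n)$ and applying Leibniz' rule, it first obtains (via Fa\`a di Bruno and the power-boundedness bounds on $\varphi_n^{(\alpha)}$) the uniform estimate $|(\psi\circ\varphi_n)^{(\beta)}(x)|\leq C_m|\varphi_n(x)|^{q_m}$ for $|\beta|\leq m$ and $|x|\geq k$, and then uses the $o$-hypothesis only in the form $\max\{3,2^{2m}C_m^2\}\,|\varphi_n|^2\leq|\varphi_{n+1}|$ for $n\geq n_1$; the squaring trick $(1+|\varphi_n|)^{q'_m}|\varphi_n|^{q_m}\leq(1+|\varphi_{n+1}|)^{q'_m/2}|\varphi_{n+1}|^{q_m/2}\leq(1+|\varphi_{n+1}|)^{q'_m}$ then closes the induction with the \emph{same} constants $M_m,q'_m$ at every step. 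You instead expand the full $n$-fold product at once, accept the $e^{O(n)}$ combinatorial loss, and absorb it via the explicit doubly-exponential lower bound $|\varphi_n|\geq M_0^{2^{n-N'}}$ that you extract from the $o$-hypothesis. The paper's route is shorter and, notably, makes part~(ii) essentially free: hypothesis~\eqref{eq: additional hypothesis} simply replaces the restriction $|x|\geq k$ in the auxiliary estimate and the identical induction runs globally for $n\geq n_0$, with no need for your escape-from-compact iteration (which, incidentally, would be cleaner as a single application of $\psi^{n,\varphi}=\psi^{\ell_0 n_0,\varphi}\cdot(\psi^{n-\ell_0 n_0,\varphi}\circ\varphi_{\ell_0 n_0})$ for one fixed $\ell_0$ rather than a step-by-step iteration). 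Your approach, by contrast, makes the quantitative role of the growth hypothesis fully explicit and is more robust in the sense that it does not depend on carefully tuned constants in the inductive step.
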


\begin{proof}
Since $C_\varphi$ is power bounded, by Remark~\ref{powerboundedcomposition}
\begin{equation}\label{eq: auxiliary inequality 3}
	\exists k\geq 0\,\forall |x|\geq k, n\in\N:\,|\varphi_n(x)|\geq 1.
\end{equation} 

Before we continue, we need to establish the auxiliary estimate \eqref{eq: auxiliary inequality 4} below. For this, we recall that for $\beta\in\N_0^d$ we have
$$ (\psi \circ \varphi_n)^{(\beta)}(x) = \sum_{\substack{\lambda \in \mathbb{N}_0^d \\ 0 \leq |\lambda| \leq |\beta|}} \psi^{(\lambda)}(\varphi_n(x)) \sum_{p(\beta,\lambda)} \beta! \prod_{j=1}^{|\beta|} \frac{(\varphi_n^{(\ell_j)}(x))^{k_j}}{k_j! (\ell_j!)^{|k_j|}}. $$
As $\psi \in \mathscr{O}_M(\mathbb{R}^d)$, for all $\lambda \in \mathbb{N}_0^d$ there exist $C_\lambda,q_\lambda>0$ such that
$$ |\psi^{(\lambda)}(\varphi_n(x))| \leq C_\lambda(1+|\varphi_n(x)|)^{q_\lambda}, \qquad x \in \mathbb{R}^d, \ n \in \mathbb{N}. $$
The power boundedness of $C_{\varphi}$ and Remark~\ref{powerboundedcomposition} (b) thus imply
$$\forall\,\beta\in\N_0^d\,\exists\,C'_\beta, q'_\beta\,\forall n\in\N, x\in\R^d:\, |(\psi \circ \varphi_n)^{(\beta)}(x)|\leq C'_{\beta}(1+|\varphi_n(x)|)^{q'_\beta}.$$
Now, let us fix $m\in\N_0$. Combining the above with \eqref{eq: auxiliary inequality 3}, we obtain the existence of $C_{m}, q_{m}\geq 1$ such that
\begin{equation}\label{eq: auxiliary inequality 4}
	\forall\,\beta\in\N_0^d, |\beta|\leq m, n\in\N, |x|\geq k:\,|(\psi\circ\varphi_n)^{(\beta)}(x)|\leq C_{m}|\varphi_n(x)|^{q_{m}}.
\end{equation}
Next, we fix $n_1\in\N$ large enough such that
$$\forall\,n\geq n_1, x\in\R^d:\, \max\{3, 2^{2m} C_m^2\} |\varphi_{n}(x)|^2 \leq |\varphi_{n+1}(x)|.$$
From the continuity of $C_{\psi,\varphi}^n=C_{\psi^{n,\varphi},\varphi_n}$ and Corollary~\ref{alpha0cor} we obtain the existence of $M_{m}, q'_{m}>0$ with $q_{m}\leq q'_{m}$ such that
\begin{equation}\label{eq: auxiliary inequality 5}
	\forall\,\beta\in\N_0^d, |\beta|\leq m, 1\leq n\leq n_1, |x|\geq k:\,|(\psi^{n,\varphi})^{(\beta)}(x)|\leq M_{m}(1+|\varphi_n(x)|)^{q'_{m}}.
\end{equation}
Therefore, for $\beta\in\N_0^d$ with $|\beta|\leq m$ and $x\in\R^d$ with $|x|\geq k$ we conclude
\begin{eqnarray*}
	|(\psi^{n_1+1,\varphi})^{(\beta)}(x)|&=&|(\psi^{n_1,\varphi}\cdot (\psi\circ\varphi_{n_1}))^{(\beta)}(x)|\\
	&\leq&\sum_{\gamma\leq\beta}{\beta\choose\gamma}\left|\left(\psi^{n_1,\varphi}\right)^{(\gamma)}(x)\right|\cdot\left| \left(\psi\circ\varphi_{n_1}\right)^{(\beta-\gamma)}(x)\right|\\
	&\leq& M_{m}(1+|\varphi_{n_1}(x)|)^{q'_{m}}\sum_{\gamma\leq \beta}{\gamma\choose\beta}C_{m}|\varphi_{n_1}(x)|^{q_{m}}\\
	&=& M_{m} ((1+|\varphi_{n_1}(x)|)^2)^{q'_{m}/2} 2^m C_{m}(|\varphi_{n_1}(x)|^2)^{q_{m}/2}\\
	&\leq& M_{m} (1+3|\varphi_{n_1}(x)|^2)^{q'_m/2} (2^{2m} C^2_{m} |\varphi_{n_1}(x)|^2)^{q_m/2} \\
	&\leq& M_{m}(1+|\varphi_{n_1+1}(x)|)^{q'_{m}/2} |\varphi_{n_1+1}(x)|^{q_{m}/2}\\
	&\leq&M_{m}(1+|\varphi_{n_1+1}(x)|)^{q'_{m}}
\end{eqnarray*}
where we have also used that $2|\varphi_{n_1}(x)| \leq 2|\varphi_{n_1}(x)|^2$ for $|x|\geq k$, and $1 \leq q_{m}\leq q'_{m}$. Thus, \eqref{eq: auxiliary inequality 5} not only holds for $0\leq n\leq n_1$ but also for $n= n_1+1$. Proceeding recursively, we conclude
$$\forall\,\beta\in\N_0^d, |\beta|\leq m, n \in\N_0, |x|\geq k:\,|(\psi^{n,\varphi})^{(\beta)}(x)|\leq M_{m}(1+|\varphi_n(x)|)^{q'_{m}},$$
i.e.\ \eqref{eq:key inequality for main 1} which proves (i).

Assume that \eqref{eq: additional hypothesis} holds. Refering to this additional hypothesis instead of \eqref{eq: auxiliary inequality 3}, the same arguments which led to \eqref{eq: auxiliary inequality 4} yield
\begin{equation}\label{eq: auxiliary inequality 6}
	\forall\,\beta\in\N_0^d, |\beta|\leq m, n\geq n_0, x\in\R^d:\,|(\psi\circ\varphi_n)^{(\beta)}(x)|\leq C_{m}|\varphi_n(x)|^{q_{m}}.
\end{equation}
With the aid of \eqref{eq: auxiliary inequality 6} in place of \eqref{eq: auxiliary inequality 4}, the validity of \eqref{eq:key inequality for main 2} is derived with the same arguments as \eqref{eq:key inequality for main 1}. This proves (ii).
\end{proof}

\begin{remark}\label{remark: additional hypothesis}
Let $\varphi:\R^d\rightarrow\R^d$ be a symbol for $\mathscr{S}(\R^d)$ such that $C_\varphi$ is power bounded. Assume that $|\varphi(x)|> 0$ for each $x\in\R^d$. Then condition \eqref{eq: additional hypothesis} holds. Indeed, by Remark~\ref{powerboundedcomposition} there is $k>0$ such that $|\varphi(x)|\geq k^{1/k}$ whenever $|x|\geq k$. Since $|\varphi|$ has no zeros and $\{x:|x|\leq k\}$ is compact, there is $c>0$ such that $|\varphi(x)|\geq c$, $x\in\R^d$. In particular, $|\varphi|\geq c$ on each of the sets $\varphi_{n-1}(\R)$, $n\in\N$, which shows that \eqref{eq: additional hypothesis} holds true.

Moreover, in case of $d=1$, the additional hypothesis \eqref{eq: additional hypothesis} is automatically satisfied for every polynomial $\varphi:\R\rightarrow\R$ for which $C_\varphi$ is power bounded on $\mathscr{S}(\R)$ without fixed points and $\text{deg}(\varphi)\geq 2$. Indeed, we have either $\varphi(x)>x$ for each $x\in\R$ or $\varphi(x)<x$ for each $x\in\R$. In particular, $\lim_{n\rightarrow\infty}|\varphi_n(x)|=\infty$ for every $x\in\R$. By Remark~\ref{powerboundedcomposition}, there is $k>0$ such that $|\varphi_n(x)|\geq k^{1/k}$ for each $x\in\R\backslash[-k,k]$ and every $n\in\N$. We now consider the case $\varphi(x)>x, x\in\R$. The arguments for the case $\varphi(x)<x, x\in\R$, are mutatis mutandis the same. In particular, $(\varphi_n(x))_{n\in\N}$ is strictly increasing for each $x\in\R$. Additionally, for each $x\in[-k,k]$ there is $n_x\in\N$ such that $\varphi_n(x)>k^{1/k}$ for all $n\geq n_x$. For every $x\in[-k,k]$, let $\delta_x>0$ be such that $\varphi_{n_x}(y)>k^{1/k}$ for every $y\in\R$ with $|x-y|<\delta_x$. Since $[-k,k]$ is compact and the sequences $(\varphi_n(x))_{n\in\N}$, $x\in[-k,k]$, are strictly increasing, there is $n_0\in\N$ such that $\varphi_n(x)\geq k^{1/k}$ for all $x\in[-k,k]$ and $n\geq n_0$. We conclude that \eqref{eq: additional hypothesis} is true.
\end{remark}

Combining Remark~\ref{powerboundedcomposition}, Remark~\ref{sufficientpb}, Lemma~\ref{lemma: auxiliary lemma}, and Remark~\ref{remark: additional hypothesis}, we immediately derive the following result.
\begin{theorem}\label{theorem: additional hypothesis for universal power boundedness}
Let $\varphi:\R^d\rightarrow\R^d$ be a symbol for $\mathscr{S}(\R^d)$ such that $|\varphi_n|^2=o(|\varphi_{n+1}|)$ uniformly as $n\rightarrow\infty$. Moreover, assume that there is $n_0$ such that $|\varphi_{n_0}(x)|> 0$ for each $x\in \R^d$.	
Then, the following are equivalent.
\begin{itemize}
	\item[(i)] $C_\varphi$ is power bounded on $\mathscr{S}(\R^d)$.
	\item[(ii)] $C_{\psi,\varphi}$ is power bounded on $\mathscr{S}(\R^d)$ for every $\psi\in\mathscr{O}_M(\R^d)$ and/or for every smooth $\psi$ of small decay with respect to $\varphi$.
\end{itemize}
\end{theorem}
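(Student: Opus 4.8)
The plan is to deduce the equivalence directly from the characterization of power boundedness of $C_{\psi,\varphi}$ via conditions (a),(b) in Theorem~\ref{CorollaryPowerBoundednessSmallDecay}(ii), using that by Remark~\ref{sufficientpb} these two conditions are \emph{sufficient} for $C_{\psi,\varphi}$ to act on $\mathscr{S}(\R^d)$ and be power bounded, whether or not $\psi$ is of small decay with respect to $\varphi$. The implication (ii)$\Rightarrow$(i) is immediate: $\psi\equiv 1$ belongs to $\mathscr{O}_M(\R^d)$ and is of small decay with respect to every smooth $\varphi$, and $C_{1,\varphi}=C_\varphi$. So the work is in (i)$\Rightarrow$(ii), and by the previous remark it is enough, assuming $C_\varphi$ power bounded, to verify (a) and (b) of Theorem~\ref{CorollaryPowerBoundednessSmallDecay}(ii) for every $\psi\in\mathscr{O}_M(\R^d)$ and for every smooth $\psi$ of small decay with respect to $\varphi$ for which $C_{\psi,\varphi}$ acts on $\mathscr{S}(\R^d)$.

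First I would dispose of (b): it is literally condition (b) of Remark~\ref{powerboundedcomposition} for the power bounded operator $C_\varphi$, hence holds. Remark~\ref{powerboundedcomposition}(a) moreover provides $k,l>0$ with $|\varphi_n(x)|\ge|x|^{k}$ for all $n$ and all $|x|\ge l$, which readily gives a polynomial comparison $(1+|x|)^p\le C_p(1+|\varphi_n(x)|)^{r_p}$ valid for all $x\in\R^d$ and all $n$, with $C_p,r_p$ depending only on $p$. Next, for (a), I would invoke Remark~\ref{remark: additional hypothesis}: the standing hypothesis $|\varphi_{n_0}(x)|>0$ for all $x$, combined with power boundedness of $C_\varphi$, guarantees condition \eqref{eq: additional hypothesis}. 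Since also $|\varphi_n|^2=o(|\varphi_{n+1}|)$ uniformly, Lemma~\ref{lemma: auxiliary lemma}(ii) now applies and yields, for every $\psi\in\mathscr{O}_M(\R^d)$ and $m\in\N_0$, constants $M_m,q'_m>0$ with $|(\psi^{n,\varphi})^{(\beta)}(x)|\le M_m(1+|\varphi_n(x)|)^{q'_m}$ for all $n\ge n_0$, $x\in\R^d$ and $|\beta|\le m$, i.e.\ \eqref{eq:key inequality for main 2}. Multiplying this by the polynomial comparison above gives condition (a) of Theorem~\ref{CorollaryPowerBoundednessSmallDecay}(ii) for the iterates $n\ge n_0$ (with $q=q'_m+r_p$), and the finitely many iterates $1\le n<n_0$ are absorbed using continuity of $C_{\psi,\varphi}^n=C_{\psi^{n,\varphi},\varphi_n}$ on $\mathscr{S}(\R^d)$ together with Theorem~\ref{theo: weighted compositions characterized}(iii) (equivalently Corollary~\ref{alpha0cor}) and passing to the maximum of the finitely many resulting exponents. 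Thus (a) and (b) hold, and Remark~\ref{sufficientpb} gives power boundedness of $C_{\psi,\varphi}$.

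For a smooth $\psi$ of small decay with respect to $\varphi$ with $C_{\psi,\varphi}$ acting on $\mathscr{S}(\R^d)$, the same scheme applies, with one adjustment: Lemma~\ref{lemma: auxiliary lemma}(ii) uses $\psi\in\mathscr{O}_M(\R^d)$ only to bound $\psi^{(\lambda)}(\varphi_n(x))$ by a power of $1+|\varphi_n(x)|$; if instead $C_{\psi,\varphi}$ acts, Corollary~\ref{alpha0cor} bounds $\psi^{(\lambda)}(\varphi_n(x))$ by a power of $1+|\varphi_{n+1}(x)|$, and the recursive degree estimate in the proof of Lemma~\ref{lemma: auxiliary lemma}(ii) still closes with an $n$-independent exponent because the uniform bound $|\varphi_n|^2\le\varepsilon|\varphi_{n+1}|$ eventually absorbs the extra factor. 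This last point --- keeping the polynomial degree $q'_m$ uniform in $n$ although $\psi^{n,\varphi}=\psi\cdot(\psi\circ\varphi_1)\cdots(\psi\circ\varphi_{n-1})$ has $n$ factors --- is the only genuinely technical ingredient, and it is exactly what the super-quadratic growth hypothesis $|\varphi_n|^2=o(|\varphi_{n+1}|)$ is designed to make possible; all remaining steps are a matter of lining up the hypotheses of Remark~\ref{powerboundedcomposition}, Remark~\ref{remark: additional hypothesis}, Lemma~\ref{lemma: auxiliary lemma}, and Remark~\ref{sufficientpb}. I therefore expect that this ingredient, already carried out in Lemma~\ref{lemma: auxiliary lemma}, is the main obstacle, the rest being bookkeeping with the characterizations established above.
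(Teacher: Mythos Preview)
Your argument is exactly the paper's: the proof there reads in full ``Combining Remark~\ref{powerboundedcomposition}, Remark~\ref{sufficientpb}, Lemma~\ref{lemma: auxiliary lemma}, and Remark~\ref{remark: additional hypothesis}, we immediately derive the following result,'' and you have correctly unpacked how these four pieces fit together for $\psi\in\mathscr{O}_M(\R^d)$ --- Remark~\ref{powerboundedcomposition} supplies condition~(b) and the growth bound $(1+|x|)^p\le C_p(1+|\varphi_n(x)|)^{r_p}$, Remark~\ref{remark: additional hypothesis} converts $|\varphi_{n_0}|>0$ into~\eqref{eq: additional hypothesis}, Lemma~\ref{lemma: auxiliary lemma}(ii) then gives~\eqref{eq:key inequality for main 2}, and Remark~\ref{sufficientpb} concludes.

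On the small-decay clause you are in fact more careful than the paper. Lemma~\ref{lemma: auxiliary lemma} is stated only for $\psi\in\mathscr{O}_M(\R^d)$, and the paper's one-line proof does not say how the four cited results cover a general smooth $\psi$ of small decay for which $C_{\psi,\varphi}$ acts. Your proposed adjustment --- replacing the $\mathscr{O}_M$ bound $|\psi^{(\lambda)}(\varphi_n(x))|\le C(1+|\varphi_n(x)|)^{q}$ by the bound $|\psi^{(\lambda)}(\varphi_n(x))|\le C(1+|\varphi_{n+1}(x)|)^{q}$ coming from Corollary~\ref{alpha0cor}, and noting that the recursion in the proof of Lemma~\ref{lemma: auxiliary lemma}(ii) still closes because $|\varphi_n|^2=o(|\varphi_{n+1}|)$ absorbs the shift --- is the right idea and works (one needs $q'_m\ge 2q_m$ rather than $q'_m\ge q_m$ when setting up the induction, and uses that $\inf_x|\varphi_n(x)|\to\infty$, which follows from~\eqref{eq: additional hypothesis} together with $|\varphi_{n+1}|\ge|\varphi_n|^2/\varepsilon$). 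So your sketch for that case is sound, though it is an extension of Lemma~\ref{lemma: auxiliary lemma} rather than a direct citation of it.
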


In order to strengthen the above theorem in case $d=1$ we first prove another auxiliary result.

\begin{proposition}
\label{convergenceiterates}
Let $E$ be a Montel locally convex space and let $T\in \mathcal{L}(E)$.
\begin{itemize}
	\item[(i)] If $\lambda T$ is power bounded for some $\lambda>1$, then $(T^n)_{n\in\N}$ is convergent to 0 in $\mathcal{L}_b(E)$.
	\item[(ii)] If $\left(\frac{1}{n^k}T^n\right)_{n\in\N}$ is bounded in $\mathcal{L}_b(E)$ for some $k>0$, then $\lambda T$ is power bounded for every $\lambda \in (0,1)$.
\end{itemize}
\end{proposition}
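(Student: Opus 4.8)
The plan is to treat the two parts separately; both rest on the Banach--Steinhaus theorem, which on a barrelled (in particular, Montel) space identifies the bounded subsets of $\mathcal{L}_b(E)$ with the equicontinuous ones, together with the fact that on a Montel space bounded sets are precompact.

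For (i), I would argue as follows. Power boundedness of $\lambda T$ means that $\{\lambda^n T^n;\,n\in\N\}$ is equicontinuous; since $\lambda>1$ gives $\lambda^{-n}\le 1$, the family $\{T^n;\,n\in\N\}=\{\lambda^{-n}(\lambda^n T^n);\,n\in\N\}$ is equicontinuous as well. Moreover, for each continuous seminorm $p$ on $E$ there is a continuous seminorm $q$ with $p(T^n x)\le\lambda^{-n}q(x)$ for all $n\in\N$ and $x\in E$, and letting $n\to\infty$ yields $T^n x\to 0$ for every $x\in E$. It then remains to upgrade pointwise convergence to convergence in $\mathcal{L}_b(E)$: an equicontinuous sequence of continuous linear operators converging pointwise converges uniformly on each precompact set (cover a precompact set by finitely many translates of a $0$-neighbourhood on which all $T^n$ are uniformly small, then invoke pointwise convergence at the finitely many centres, the pointwise limit being automatically controlled by the same seminorm $q$), and since $E$ is Montel its bounded sets are precompact, so uniform convergence on precompact sets is precisely convergence in $\mathcal{L}_b(E)$. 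Hence $T^n\to 0$ in $\mathcal{L}_b(E)$.

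For (ii), by hypothesis $\mathcal{H}:=\{\tfrac{1}{n^k}T^n;\,n\in\N\}$ is bounded in $\mathcal{L}_b(E)$; as $E$ is Montel, hence barrelled, the Banach--Steinhaus theorem shows $\mathcal{H}$ is equicontinuous. Fix $\lambda\in(0,1)$. Since $\lambda^n n^k\to 0$, the number $C:=\sup_{n\in\N}\lambda^n n^k$ is finite, and
\[
\{(\lambda T)^n;\,n\in\N\}=\{(\lambda^n n^k)\cdot\tfrac{1}{n^k}T^n;\,n\in\N\}\subseteq\{tS;\,0\le t\le C,\ S\in\mathcal{H}\},
\]
and the right-hand side is equicontinuous because scaling an equicontinuous family by uniformly bounded scalars preserves equicontinuity. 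Thus $\{(\lambda T)^n;\,n\in\N\}$ is equicontinuous, i.e.\ $\lambda T$ is power bounded. I do not anticipate a genuine obstacle here: the only step requiring an argument rather than a citation is the passage from pointwise convergence to $\mathcal{L}_b$-convergence in (i), which is the classical equicontinuity-plus-Ascoli argument carried out on the Montel space $E$ using the identification of bounded and precompact sets; the care needed is merely in keeping the topologies of pointwise, precompact, and bounded convergence on $\mathcal{L}(E)$ straight and noting that, for equicontinuous families, they agree on convergent sequences.
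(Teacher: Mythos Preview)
Your proof is correct and follows essentially the same approach as the paper: both parts reduce to equicontinuity via Banach--Steinhaus on the barrelled space $E$, with part (i) passing from pointwise convergence $T^n x\to 0$ (forced by the decay $\lambda^{-n}\to 0$) to $\mathcal{L}_b$-convergence via the Montel property, and part (ii) using the elementary fact that $\lambda^n n^k$ is bounded for $\lambda\in(0,1)$. The paper phrases (ii) by contradiction rather than directly, but this is a purely cosmetic difference.
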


\begin{proof}
If $\lambda>1$ and $(\lambda^nT^n(x))_{n\in\N}$ is bounded in $E$ for every $x\in E$ then $(T^n(x))_{n\in\N}$ is convergent to 0 for every $x\in E$, and this is equivalent to $(T^n)_n$ is convergent to 0 in $\mathcal{L}_b(E)$ since $(T^n)_{n\in\N}$ is equicontinuous and $E$ is Montel. This proves (i). To prove (ii), we proceed by contradiction. Let $0<\lambda<1$ and let $x\in E$ and let $p$ be a continuous seminorm on $E$ such that $(\lambda^np(T^n(x)))_{n\in\N}$ is unbounded. For every $k>0$, $\lambda^n<\frac{1}{n^k}$ eventually, hence $\left(\frac{1}{n^k}p(T^n(x))\right)_{n\in\N}$ is also unbounded.
\end{proof}

We can finally prove the main theorem of this subsection which complements Section \ref{subsec:degree 1}. Note that polynomials $\varphi:\R\rightarrow\R$ with $\text{deg}(\varphi)\geq 2$ are symbols for $\mathscr{S}(\R)$ by Remark \ref{rem: one dimensional composition operators}.

\begin{theorem}
\label{main}
Let $\varphi:\R\rightarrow\R$ be a polynomial with $\text{deg}(\varphi)\geq 2$. The following are equivalent.
\begin{itemize}
	\item[(a)]$\varphi$ does not have fixed points (hence, $\varphi$ is of even degree).
	\item[(b)] $C_\varphi$ is power bounded  on $\mathscr{S}(\R)$.
	\item[(c)] $C_\varphi$ is (uniformly) mean ergodic on $\mathscr{S}(\R)$.
	\item[(d)] $\left(C_\varphi^n\right)_{n\in\N}$ converges to $0$ in $\mathcal{L}_b(\mathscr{S}(\R))$.
	\item[(e)] $C_{\varphi}$ is Ces\`aro bounded on $\mathscr{S}(\R)$, i.e.\ the sequence $(\frac{1}{n}\sum_{m=1}^n C_{\varphi}^m)_{n\in\N}$ is bounded in $\mathcal{L}_b(\mathscr{S}(\R))$.
	\item[(f)] $C_{\psi,\varphi}$ is power bounded on $\mathscr{S}(\R)$ for every $\psi \in \mathscr{O}_M(\mathbb{R})$ and/or for every smooth $\psi$ of small decay with respect to $\varphi$.
	\item[(g)] $(C_{\psi,\varphi}^n)_{n\in\N}$ is convergent to 0 in $\mathcal{L}_b(\mathscr{S}(\R))$ for every $\psi\in \mathscr{O}_M(\R)$ and/or for every smooth $\psi$ of small decay with respect to $\varphi$.
	\item[(h)] $(C_{\psi,\varphi}^n)_{n\in\N}$ is (uniformly) mean ergodic on $\mathscr{S}(\R)$ for every $\psi\in \mathscr{O}_M(\R)$ and/or for every smooth $\psi$ of small decay with respect to $\varphi$.
\end{itemize}

\end{theorem}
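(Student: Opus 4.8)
The plan is to close the cycle of implications
\[
\text{(a)}\Rightarrow\text{(b)}\Rightarrow\text{(f)}\Rightarrow\text{(g)}\Rightarrow\text{(d)}\Rightarrow\text{(c)}\Rightarrow\text{(e)}\Rightarrow\text{(a)},
\]
supplemented by the short detour $\text{(g)}\Rightarrow\text{(h)}\Rightarrow\text{(e)}$, so that all eight statements become equivalent. Several arrows are quick. The equivalence $\text{(a)}\Leftrightarrow\text{(b)}$ is Remark~\ref{powerboundedcomposition} combined with \cite[Theorem 3.11]{FeGaJo18}, using that a univariate polynomial of degree $\geq 2$ automatically satisfies condition (a) of Remark~\ref{powerboundedcomposition}. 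For $\text{(d)}\Rightarrow\text{(c)}$ one only observes that the Cesàro means of a null sequence in $\mathcal{L}_b(\mathscr{S}(\R))$ are again null, so $C_\varphi$ is uniformly mean ergodic, hence mean ergodic. For $\text{(c)}\Rightarrow\text{(e)}$ one uses that $\mathscr{S}(\R)$ is Montel: by \cite[Theorem 2.5]{KaSa22} mean ergodicity upgrades to uniform mean ergodicity, so the Cesàro means converge in $\mathcal{L}_b(\mathscr{S}(\R))$, hence are bounded. Finally $\text{(g)}\Rightarrow\text{(h)}$ (average and pass to the limit $0$), $\text{(h)}\Rightarrow\text{(e)}$ and $\text{(g)}\Rightarrow\text{(d)}$ are trivial, the last two upon specializing to $\psi\equiv 1$, which lies in $\mathscr{O}_M(\R)$ and is of small decay with respect to $\varphi$.

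The positive core is $\text{(b)}\Rightarrow\text{(f)}\Rightarrow\text{(g)}$. If $C_\varphi$ is power bounded then, by Remark~\ref{powerboundedcomposition}, $\deg(\varphi)$ is even and $\varphi$ has no fixed points, so $\varphi(x)>x$ for every $x$ (or $\varphi(x)<x$ for every $x$, which is symmetric). A short monotonicity argument then shows that $\mu_n:=\min_{x\in\R}|\varphi_n(x)|$ is strictly increasing to $+\infty$ and, comparing with the leading term of $\varphi$, that $|\varphi_{n+1}(x)|\geq c\,|\varphi_n(x)|^{\deg\varphi}$ for all large $n$, uniformly in $x$; in particular $|\varphi_n|^{3/2}=o(|\varphi_{n+1}|)$ uniformly (and $|\varphi_n|^{2}=o(|\varphi_{n+1}|)$ uniformly when $\deg(\varphi)\geq 4$). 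Together with \eqref{eq: additional hypothesis}, which holds here by Remark~\ref{remark: additional hypothesis}, this lets one apply Theorem~\ref{theorem: additional hypothesis for universal power boundedness} directly when $\deg(\varphi)\geq 4$, and for $\deg(\varphi)=2$ one reruns the proof of Lemma~\ref{lemma: auxiliary lemma} with the exponent $2$ replaced by $3/2$, which still closes the recursion if one chooses $q'_m\geq 2q_m$; this yields (f), the ``small decay'' half being formally weaker than the ``$\mathscr{O}_M(\R)$'' half because for a non-constant polynomial $\varphi$ every $\psi$ of small decay with respect to $\varphi$ for which $C_{\psi,\varphi}$ acts on $\mathscr{S}(\R)$ must belong to $\mathscr{O}_M(\R)$ by Corollary~\ref{alpha0cor}. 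For $\text{(f)}\Rightarrow\text{(g)}$, fix such a $\psi$; plugging the bounds of Lemma~\ref{lemma: auxiliary lemma}(ii) and of Remark~\ref{powerboundedcomposition}(b) into \eqref{eq: Leibniz and Faa di Bruno}, together with the rapid decay of $f$ and the estimate $|\varphi_n(x)|\geq\max\{\mu_n,|x|^k\}$, forces $\|C_{\psi,\varphi}^n f\|_{p,\alpha}=O(\mu_n^{-M})$ for every $M$, hence decay faster than any geometric rate because $\mu_n$ grows doubly exponentially. Thus $\lambda C_{\psi,\varphi}$ is power bounded for every $\lambda>1$, and Proposition~\ref{convergenceiterates}(i) gives $(C_{\psi,\varphi}^n)\to 0$ in $\mathcal{L}_b(\mathscr{S}(\R))$.

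The genuine obstacle is $\text{(e)}\Rightarrow\text{(a)}$, which I would prove contrapositively: \emph{if the polynomial $\varphi$ of degree $\geq 2$ has a real fixed point, then $C_\varphi$ is not Cesàro bounded.} Fix a real fixed point $x_0$ and $f\in\mathscr{S}(\R)$ with $f'(x_0)=1$. If $|\varphi'(x_0)|>1$, then $\varphi_n(x_0)=x_0$ and $\varphi_n'(x_0)=\varphi'(x_0)^n$, so $\bigl(\tfrac1N\sum_{m=1}^{N}C_\varphi^m f\bigr)'(x_0)=\tfrac1N\sum_{m=1}^{N}\varphi'(x_0)^m$, whose modulus tends to $\infty$; since $\|\cdot\|_{0,1}$ is a continuous norm on $\mathscr{S}(\R)$, this contradicts boundedness of the Cesàro means. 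If $\varphi'(x_0)=1$, let $j\geq 2$ be minimal with $\varphi^{(j)}(x_0)\neq 0$; an induction along $\varphi_m=\varphi\circ\varphi_{m-1}$ gives $\varphi_m^{(r)}(x_0)=0$ for $2\leq r<j$ and $\varphi_m^{(j)}(x_0)=m\,\varphi^{(j)}(x_0)$, whence $(C_\varphi^m f)^{(j)}(x_0)=f^{(j)}(x_0)+m\,\varphi^{(j)}(x_0)$ and the Cesàro means are unbounded in $\|\cdot\|_{0,j}$. The delicate case is $\varphi'(x_0)=-1$ — and more generally the possibility that \emph{every} real fixed point of $\varphi$ is attracting, which does occur, e.g.\ for $\varphi(x)=-x^3$ — since here the relevant derivatives of the iterates at $x_0$ only oscillate. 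My approach would be to reduce to exhibiting a \emph{real periodic point of $\varphi$ of period $1$ or $2$ with multiplier of modulus $\geq 1$}: by examining the extremal real roots of $\varphi-\mathrm{id}$ (comparing signs of leading coefficients, and noting that odd degree forces a fixed point) one either already finds one, or is thrown into the configuration ``$\deg(\varphi)$ odd, negative leading coefficient, unique real fixed point'', in which case one passes to $\varphi_2=\varphi\circ\varphi$, which has odd degree and \emph{positive} leading coefficient and therefore has a real fixed point $y$ with $\varphi_2'(y)\geq 1$ that is necessarily distinct from $x_0$ unless $x_0$ is already parabolic for $\varphi$; such $y$ is a genuine $2$-cycle of $\varphi$. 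Given such a periodic point, grouping the iterates according to the cycle length turns the $\|\cdot\|_{0,1}$-seminorm of the Cesàro means of $C_\varphi$ (if the multiplier has modulus $>1$), respectively the $\|\cdot\|_{0,j}$-seminorm (if the multiplier equals $1$, using the quadratic-in-$N$ growth coming from two linear-in-$m$ contributions), into an unbounded quantity, again at the level of $C_\varphi$ itself so that no transfer from $C_\varphi^2$ is needed.

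The technically most demanding point is therefore verifying that this dynamical classification is exhaustive — i.e.\ that a polynomial of degree $\geq 2$ with a real fixed point always possesses a real non-attracting periodic point of period $1$ or $2$ — and carrying out the two Cesàro-mean computations along a cycle. An alternative, softer route for $\text{(e)}\Rightarrow\text{(a)}$ that I would keep in reserve: Cesàro boundedness plus Montelness makes the orbits $\{\tfrac1N\sum_{m=1}^{N}C_\varphi^m f\}_N$ relatively compact with all cluster points in $\ker(I-C_\varphi)$, and since $f\circ\varphi=f$ forces $f$ to vanish along the (dense) escaping orbits of $\varphi$, suitable control of $\ker(I-C_\varphi)$ would make $C_\varphi$ mean ergodic with limit $0$, which can then be confronted with the presence of the fixed point. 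Once $\text{(e)}\Rightarrow\text{(a)}$ is secured the cycle is complete and all of (a)--(h) are equivalent.
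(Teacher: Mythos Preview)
Your cycle and most of your arrows are fine, but you are missing the trick that makes both (e)$\Rightarrow$(b) and (f)$\Rightarrow$(g) nearly immediate and completely bypasses your ``technically most demanding point''. For (e)$\Rightarrow$(b): Ces\`aro boundedness gives that $\bigl(\tfrac{1}{n}C_\varphi^n\bigr)_n$ is bounded in $\mathcal{L}_b(\mathscr{S}(\R))$, whence by Proposition~\ref{convergenceiterates}(ii) the operator $\tfrac12 C_\varphi=C_{1/2,\varphi}$ is power bounded; the constant $\psi_0\equiv\tfrac12$ is of small decay with respect to $\varphi$, and any polynomial of degree $\geq 2$ satisfies the growth hypothesis of Proposition~\ref{weightedimplycomposition}, so that proposition yields power boundedness of $C_\varphi$ itself. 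No fixed-point multipliers, no periodic orbits, no case analysis on $\varphi'(x_0)$. The same device handles (f)$\Rightarrow$(g): apply (f) with $2\psi$ in place of $\psi$ (still in $\mathscr{O}_M(\R)$, resp.\ still of small decay) so that $C_{2\psi,\varphi}=2\,C_{\psi,\varphi}$ is power bounded, and invoke Proposition~\ref{convergenceiterates}(i). Both arguments exploit the freedom in (e) and (f) to rescale, letting Proposition~\ref{convergenceiterates} transfer power-boundedness between $C_{\psi,\varphi}$ and its scalar multiples without any hard analysis.

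Your contrapositive route to (e)$\Rightarrow$(a) --- producing a real periodic point of period $1$ or $2$ with multiplier $\geq 1$ from the extremal roots of $\varphi-\mathrm{id}$ or $\varphi_2-\mathrm{id}$, then reading off Ces\`aro-unboundedness at that point --- does appear to be completable (the sign analysis on $\varphi-\mathrm{id}$ and, in the odd--negative case, on $\varphi_2-\mathrm{id}$ genuinely always yields such a point), but it is substantially longer and leaves nontrivial verifications open as written; your ``softer route'' fallback is too vague to stand alone. Likewise your direct $O(\mu_n^{-M})$ estimate for (f)$\Rightarrow$(g) is correct in outline but is superseded by the one-line rescaling argument. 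Your care with the degree-$2$ case in applying Lemma~\ref{lemma: auxiliary lemma} is noted, but with the rescaling trick in hand the only place that lemma is needed is inside Theorem~\ref{theorem: additional hypothesis for universal power boundedness} for (b)$\Leftrightarrow$(f), exactly as in the paper.
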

\begin{proof}
The equivalence among (a), (b), and (c) is \cite[Theorem 3.11]{FeGaJo18} (combined with \cite[Theorem 2.5(b)]{KaSa22}), and (d) is equivalent to (b) due to \cite[Corollary 3.12]{FeGaJo18}. Clearly (b) implies (e).  If we assume (e), then $\left(\frac{1}{n}C^n_{\varphi}\right)$ is bounded in $\mathcal{L}_b(\mathscr{S}(\R))$, and Proposition~\ref{convergenceiterates}(ii) yields that $C_{\psi_0,\varphi}$ is power bounded for $\psi_0(x)=\frac12$. Proposition~\ref{weightedimplycomposition} implies that $C_{\varphi}$ is power bounded and (e) implies (b). The equivalence (b) and (f) holds by Theorem~\ref{theorem: additional hypothesis for universal power boundedness}. Trivially, (g) implies (b), and if (f) is satisfied, then $C_{2\psi,\varphi} = 2C_{\psi,\varphi}$ is power bounded, and (g) follows by Proposition~\ref{convergenceiterates}(i). This shows that (a) to (g) are equivalent. Finally, (f) implies (h) by \cite[Proposition 3.3]{BoPaRi11} and \cite[Theorem 2.5(b)]{KaSa22} while (h) implies (e) by \cite[Theorem 2.5(b)]{KaSa22}.
\end{proof}

\begin{remark}
Besides extending the results about composition operators on $\mathscr{S}(\R)$ from \cite{FeGaJo18} to weighted composition operators, showing that the ergodic properties rely only on the symbol, Theorem~\ref{main} also improves \cite[Theorem 3.11 and Corollary 3.12]{FeGaJo18} for composition operators, by showing that Ces\`aro boundedness and mean ergodicty are equivalent for composition operators. Analyzing the proof, we can even replace the property of Ces\`aro boundedness in condition (e) by the boundedness of $\left(\frac{1}{n^k}C_{\varphi_n}\right)$ in $\mathcal{L}_b(\mathscr{S}(\R))$ for some $k>0$.
\end{remark}

\section{Supercyclicity of weighted composition operators with $\varphi$ being a univariate polynomial.}\label{sec:supercyclicity}

In this short section we use the ideas of Lemma \ref{lemma: auxiliary lemma} in order to prove the next result which complements the results from~\cite{GoPr20}.

\begin{proposition}
Let $\varphi:\R\to\R$ be a polynomial such that $C_{\psi,\varphi}$ is weakly supercyclic for some $\psi\in\mathscr{O}_M(\mathbb{R})$. Then $\varphi(x)=x+b$ with $b\neq 0$.
\end{proposition}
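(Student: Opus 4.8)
The plan is to combine a spectral obstruction for the transpose with the quantitative estimates behind Lemma~\ref{lemma: auxiliary lemma}. First I would make the harmless reductions: we may assume $\psi\not\equiv 0$ (otherwise $C_{\psi,\varphi}=0$) and $\deg\varphi\geq 1$ (if $\varphi$ is constant then $C_{\psi,\varphi}$ has rank $\leq 1$, hence is not weakly supercyclic on the infinite dimensional space $\mathscr{S}(\R)$). Next I would show that $\psi$ has no real zero: if $\psi(x_0)=0$, then $\psi^{n,\varphi}(x_0)=0$ for every $n$, so every element of the orbit $\{\lambda C_{\psi,\varphi}^nf;\lambda\in\C,n\in\N\}$ which is not a multiple of $f$ vanishes at $x_0$; since evaluation at $x_0$ is a weakly continuous functional, any element of the weak closure of the orbit with nonzero value at $x_0$ must lie in $\C f$, which is impossible because $\mathscr{S}(\R)$ is infinite dimensional. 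The key spectral fact I would record is: \emph{if $T\in\mathcal{L}(E)$ is weakly supercyclic on a Hausdorff locally convex space $E$, then the transpose $T'$ cannot have two linearly independent eigenvectors in $E'$.} Indeed, if $T'v_i=\mu_iv_i$, $i=1,2$, with $v_1,v_2$ linearly independent, and $f$ is weakly supercyclic, then $\langle f,v_i\rangle\neq 0$ (else the orbit lies in $\ker v_i$); the map $(v_1,v_2)\colon E\to\C^2$ is weak-to-norm continuous and onto, so it sends the weakly dense orbit to a dense subset of $\C^2$; but the image consists of the points $\lambda(\mu_1^n\langle f,v_1\rangle,\mu_2^n\langle f,v_2\rangle)$, all of which have $|z_1|/|z_2|=|\langle f,v_1\rangle/\langle f,v_2\rangle|\cdot(|\mu_1|/|\mu_2|)^n$, a set of ratios which is not dense in $[0,\infty]$ — a contradiction.

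For $\deg\varphi=1$, write $\varphi(x)=ax+b$ with $a\neq 0$. If $a=1$ and $b=0$ then $C_{\psi,\varphi}=M_\psi$, whose transpose has the infinitely many eigenvectors $\delta_{x_0}$, $x_0\in\R$, contradicting the spectral fact; hence if $a=1$ we must have $b\neq 0$. If $a\neq 1$, let $x_0=b/(1-a)$ be the (unique) fixed point of $\varphi$; since $\psi(x_0)\neq 0$ and $\varphi'(x_0)=a\neq 1$, the span of $\delta_{x_0}$ and $\delta_{x_0}'$ is $C_{\psi,\varphi}'$-invariant and $C_{\psi,\varphi}'$ acts on it with the two distinct eigenvalues $\psi(x_0)$ and $a\,\psi(x_0)$, hence with two linearly independent eigenvectors — again a contradiction. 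Thus $\deg\varphi=1$ forces $\varphi(x)=x+b$ with $b\neq 0$.

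It remains to rule out $\deg\varphi\geq 2$. Here the periodic structure of $\varphi$ kills most of the cases: any real fixed point $x_0$ with $\varphi'(x_0)\neq 1$ again yields two eigenvectors from $\delta_{x_0},\delta_{x_0}'$ (eigenvalues $\psi(x_0)\neq\varphi'(x_0)\psi(x_0)$); two distinct real fixed points yield $\delta_{x_0},\delta_{x_1}$; and a periodic cycle $\{a_0,\dots,a_{p-1}\}$ of length $p\geq 2$ makes $\operatorname{span}\{\delta_{a_0},\dots,\delta_{a_{p-1}}\}$ invariant with $(C_{\psi,\varphi}')^p$ having the $p\geq 2$ distinct eigenvalues among the $p$-th roots of $\prod_j\psi(a_j)\neq 0$, whence $C_{\psi,\varphi}'$ has two linearly independent eigenvectors. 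So we are reduced to the situation in which $\varphi$ has at most one periodic orbit, and if it has one it is a single fixed point $x_0$ with $\varphi'(x_0)=1$. In this remaining case I would run the argument of Lemma~\ref{lemma: auxiliary lemma}: away from $x_0$ the iterates drive $|\varphi_n(x)|\to\infty$ and the Fa\`a di Bruno expansion bounds $\psi^{n,\varphi}$ and the derivatives of $\varphi_n$ by powers of $1+|\varphi_n|$, while near the parabolic point $x_0$ the iterates $\varphi_n$ and all their derivatives grow only polynomially in $n$; as a result $(C_{\psi,\varphi}^nf)_n$ grows at most like $\tilde M^n$ in every seminorm, with $\tilde M=|\psi(x_0)|$ (or $\tilde M=1$ if $\varphi$ has no fixed point), \emph{independent of the seminorm}. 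Then $\lambda C_{\psi,\varphi}$ is power bounded for every $\lambda$ with $0<|\lambda|<\tilde M^{-1}$, and since weak supercyclicity is invariant under nonzero scalar multiples (using Proposition~\ref{convergenceiterates}-type reparametrizations) it suffices to observe that a power bounded operator on the infinite dimensional Fr\'echet--Montel space $\mathscr{S}(\R)$ is not weakly supercyclic: the orbit of any vector lies in $\C\cdot K$ with $K$ compact (boundedness plus the Montel property), which is $\sigma$-compact, and, using that the weak topology of a Fr\'echet space is angelic together with compactness of $K$, a weakly dense such set must in fact be all of $\mathscr{S}(\R)$, contradicting the Baire property.

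The main obstacle is precisely this last case $\deg\varphi\geq 2$ with a parabolic fixed point: the transpose genuinely need not have two independent eigenvectors there (for $\varphi(x)=x+x^3$ and a suitable $\psi$ with no zeros, $C_{\psi,\varphi}'$ has only the single eigendirection $\delta_0$), so the spectral argument must be replaced by the quantitative estimates underlying Lemma~\ref{lemma: auxiliary lemma} — including the control of $\psi^{n,\varphi}$, possibly after passing to an iterate $\varphi_k$ in order to have $|\varphi_{k,n}|^2=o(|\varphi_{k,n+1}|)$ away from $x_0$ — and one must also pin down the auxiliary statement that power bounded operators on $\mathscr{S}(\R)$ are never weakly supercyclic, the delicate point being when $0$ lies in the closure of the orbit, which one handles by first upgrading weak supercyclicity to supercyclicity and then excluding it in the same way.
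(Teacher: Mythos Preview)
Your treatment of degree one and the eigenvector-based reductions for $\deg\varphi\geq 2$ is correct, but the residual case is not handled, and here the paper proceeds very differently. Two simple observations you miss dispose of it cleanly. First, every even-degree polynomial fails to be injective on $\R$: if $\varphi(x_0)=\varphi(x_1)$ with $x_0\neq x_1$, then the range of $C_{\psi,\varphi}$ lies in $\ker\bigl(\psi(x_1)\delta_{x_0}-\psi(x_0)\delta_{x_1}\bigr)$, which already excludes weak supercyclicity --- so your leftover subcase ``no real periodic points at all'' (e.g.\ $\varphi(x)=x^2+1$) simply does not occur. Second, an odd-degree polynomial always has a real fixed point $a$; even when $a$ is unique and parabolic, $\delta_a$ is still an eigenvector of the transpose, and \cite[Proposition~I.26]{BM} reduces weak \emph{supercyclicity} of $C_{\psi,\varphi}$ to weak \emph{hypercyclicity} of its restriction to $\ker\delta_a$. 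Weak hypercyclicity is immediately contradicted by the Lemma~\ref{lemma: auxiliary lemma}-type estimate $|\psi^{n,\varphi}(x)|\leq M(1+|\varphi_n(x)|)^q$ for $|x|>k$ (needed only \emph{away} from $a$), since then $|C_{\psi,\varphi}^nf(x)|\leq M\sup_{y}|y|^q|f(y)|$ is bounded in $n$ at each such $x$.

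Your alternative route through power boundedness has two genuine gaps. The claim that some $\lambda C_{\psi,\varphi}$ is power bounded in the parabolic case is not justified: for $\varphi(x)=x+x^3$ one has $\varphi_n(0)=0$ while $\varphi_n'''(0)=6n$, so condition~(b) of Theorem~\ref{CorollaryPowerBoundednessSmallDecay} fails, and since every nonzero constant multiplier is of small decay, no scalar multiple of $C_{\varphi}$ is power bounded. More seriously, the implication ``power bounded on $\mathscr{S}(\R)$ $\Rightarrow$ not weakly supercyclic'' is not established: your $\sigma$-compactness/Baire sketch shows only that the projective orbit lies in a meager set $\C\cdot K$, which does not preclude weak density, and the proposed ``upgrade'' from weak supercyclicity to supercyclicity is not available in general. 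The paper avoids both issues entirely by passing to weak hypercyclicity on a hyperplane, where boundedness of the orbit at a single point already gives the contradiction.
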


\begin{proof}
If $\psi(x_0)=0$ for some $x_0\in\R$ then $C_{\psi,\varphi}(\mathscr{S}(\R))\subseteq \text{Ker}(\delta_{x_0})$, and $C_{\psi,\varphi}$ cannot be weakly supercyclic. If $\varphi(x_0)=\varphi(x_1)$ for some $x_0\neq x_1$ then $C_{\psi,\varphi}(\mathscr{S}(\R))\subseteq \text{Ker}(\frac{\psi(x_0)}{\psi(x_1)}\delta_{x_1}-\delta_{x_0})$, and again we conclude that $C_{\psi,\varphi}$ is not weakly supercyclic. Hence $ C_{\psi,\varphi}$ is not weakly supercyclic if $\varphi$ has even degree. If $\varphi$ is constant, the range of $C_{\psi,\varphi}$ is the span of $\psi$, and again $ C_{\psi,\varphi}$ is not weakly supercyclic.

When $\varphi(x)=ax+b$ and $a\neq 1$, $a\neq 0$, then for $x_0:=-\frac{b}{a}$, if we denote by $X$ the linear span of $\{\delta_{x_0},\delta_{x_0}^{(1)}\}\subseteq \mathscr{S}(\R)'$, satisfies $C_{\psi,\varphi}'(X)\subseteq X$, and the matrix representing the restriction of $C_{\psi,\varphi}'$ to $X$ in the basis $\{\delta_{x_0},\delta_{x_0}^{(1)}\}$ is 
$$\left(\begin{array}{cc} \psi(x_0)& \psi'(x_0)\\ 0&a\psi(x_0)\end{array}\right).$$

\noindent Since $a\neq 1$ we conclude that $C_{\psi,\varphi}'$ has two eigenvalues, and then $C_{\psi,\varphi}$ is not weakly supercyclic by \cite[Proposition I.26]{BM}.

When $\varphi(x)=x$, $C_{\psi,\varphi}$ is a multiplication operator with $\{\delta_x:\ x\in \R\}$ being eigenvectors of $C_{\psi,\varphi}'$, and again we conclude by \cite[Proposition I.26]{BM} that $C_{\psi,\varphi}$ cannot be weakly supercyclic. If $\varphi$ has two fixed points again the same argument applies.

To finish, we have to conclude that $C_{\psi,\varphi}$ is not weakly supercyclic when $\varphi$ is a polynomial of odd degree bigger or equal than 3 with only one fixed point $a$ and $\psi(x)\neq 0$ for any $x\in\R$. We proceed by contradiction. Since non null multiples of weakly supercyclic operators are weakly supercyclic, we can assume $\psi(x_0)=1$. Under these hypotheses, there is $k>|a|$ such that $|x|\geq k$ implies $|\varphi(x)|>|x|$ and $|\varphi_n(x)|^2<|\varphi_{n+1}(x)|$ for all $n\in\N$. Let $M,q>0$ such that $|\psi(x)| \leq M|x|^q$ for $|x|>k$. We argue as in Lemma~\ref{lemma: auxiliary lemma} to get $|\psi^{n,\varphi}(x)|\leq M|\varphi_n(x)|^q$ for all $|x|>k$. From \cite[Proposition I.26]{BM} we get that $C_{\psi,\varphi}$ restricted to $\text{Ker}(\delta_a)$ is weakly hypercyclic, and, for any $|x|>k$ and $f\in \text{Ker}(\delta_a)$, being a hypercyclic vector 
$$ |C_{\psi,\varphi}^nf(x)|=|\psi^{n,\varphi}(x)|\cdot |f(\varphi_n(x))|\leq M|\varphi_n(x)|^q|f(\varphi_n(x))|\leq  M\sup_{x\in\R}|x|^q|f(x)|, $$
and this supremum is finite since $f\in \mathscr{S}(\R)$, which is a contradiction with the assumption that $f$ is a weakly hypercyclic vector.
\end{proof}

For the translation operator, i.e.\ $C_\varphi$ with $\varphi(x)=x+1$, we refer the reader to~\cite{GoPr20} for sufficient conditions on $\psi \in \mathscr{O}_M(\mathbb{R})$ for $C_{\psi,\varphi}$ to be weakly supercyclic on $\mathscr{S}(\R)$ as well as for other linear dynamical properties for these operators.

\quad

\noindent\textbf{Acknowledgement.} The research on the topic of this article was initiated during a visit of the third named author at  Departamento de Matem\'atica Aplicada, E. Polit\'{e}cnica Superior de Alcoy, Universidad Polit\'ecnica de Valencia. He is very grateful to his colleagues from Valencia for the cordial hospitality during this stay as well as during numerous earlier ones. The first named author was supported by GV PROMETEU/2021/070. The third named author was partially supported by the project PID2020-119457GB-100 funded by MCIN/AEI/10.13039/501100011033 and by “ERDF A way of making Europe”.


\begin{thebibliography}{999}
	
	\bibitem{AlJoMe22} A.A.~Albanese, E.~Jord\'a, C.~Mele: Dynamics of composition operators on function spaces defined by local and global properties. J.\ Math.\ Anal.\ Appl.\ {\bf 514}(1), paper No.\ 126303, 15 pp.\ (2022).
	
	\bibitem{AlMe22} A.A.~Albanese, C.~Mele: Spectra and ergodic properties of multiplication and convolution operators on the space $\mathcal{S}(\mathbb{R})$. Rev.\ Mat.\  Complut.\ \textbf{35}(3), 739--762 (2022).
 
	\bibitem{BM} F.~Bayart, E.~Matheron: Dynamics of Linear Operators, Cambridge University Press, 2009.
	
	\bibitem{BeGoJoJo16b}
	M.J.~Beltr\'an-Meneu, M.C.~G\'omez-Callado, E.~Jord\'a, D.~Jornet: Mean ergodic composition operators on Banach spaces of holomorphic functions. J.\ Funct.\ Anal.\ {\bf 270}(2), 4369--4385 (2016).
	
	\bibitem{BeGoJoJo16}
	M.J.~Beltr\'an-Meneu, M.C.~G\'omez-Callado, E.~Jord\'a, D.~Jornet: Mean ergodicity of weighted composition operators on spaces of holomorphic functions. J.\ Math.\ Anal.\ Appl.\ {\bf 444}(2), 1640--1651 (2016).
	
	\bibitem{BeJo21} M.J.~Beltr\'an-Meneu, E.~Jord\'a: Dynamics of weighted composition operators on spaces of entire functions of exponential and infraexponential type. Mediterr.\ J.\ Math.\ {\bf 18}(5), paper No.\ 212, 18 pp.\ (2021).
	
	
	\bibitem{Bonet10} J.~Bonet: A problem on the structure of {F}r\'{e}chet spaces. Rev.\ R.\ Acad.\ Cienc.\ Exactas F\'{\i}s.\ Nat.\ Ser.\ A Mat.\ RACSAM {\bf 104}(2), 427--434 (2010).
	
	\bibitem{BoPaRi11} J.~Bonet, B.~de Pagter, W.J.~Ricker: Mean ergodic operators and reflexive Fr\'echet lattices. Proc.\ Roy.\ Soc.\ Edinburgh Sect.\ A {\bf 141}(5), 897–-920 (2011).
	
	\bibitem{BoDo11}
	J.~Bonet, P.~Doma\'nski: A note on mean ergodic composition operators on spaces of holomorphic functions. Rev.\ R.\ Acad.\ Cienc.\ Exactas F\'is.\ Nat.\ Ser.\ A Math.\ RACSAM {\bf 105}(2), 389--396 (2011).
	
	\bibitem{BoDo11b}
	J.~Bonet, P.~Doma\'nski: Power bounded composition operators on spaces of analytic functions. Collect.\ Math.\ {\bf 62}(1), 69--83 (2011).
	
	\bibitem{BoJoRo18}
	J.~Bonet, E.~Jord\'a, A.~Rodr\'iguez: Mean ergodic multiplication operators on weighted spaces of continuous functions.
	Mediterr.\ J.\ Math.\ {\bf 15}(3) Art.\ 108, 11 pp.\ (2018).
	
	\bibitem{BoRi09}
	J.~Bonet, W.~Ricker: Mean ergodicity of multiplication operators on weighted spaces of holomorphic functions.
	Arch.\ Math.\ (Basel) {\bf 92}(5), 428--437 (2009).
	
	\bibitem{CoSa1996} G.M.~Constantine, T.H.~Savits: A multivariate Fa\`a di Bruno formula with applications. Trans.\ Amer.\ Math.\ Soc.\ {\bf 348}(2), 503--520 (1996).
	
	\bibitem{FeGaJo18} C.~Fern\'andez, A.~Galbis, E.~Jord\'a: Dynamics and spectra of composition operators on the Schwartz space. J.\ Funct.\ Anal.\ {\bf 274}(12), 3503--3530 (2018).
	
	\bibitem{FeGaJo20}
	 C.~Fern\'andez, A.~Galbis, E.~Jord\'a:
		Spectrum of composition operators on $S(\mathbb{R})$ with
			polynomial symbols,
		Adv. Math.,
		{\bf 365}, 107052, 24pp
	(2020)
	
	\bibitem{FoLiWo01} V.P.~Fonf, M.~Lin, P.~Wojtaszczyk: Ergodic characterizations of reflexivity in Banach spaces, J.\ Funct.\ Anal.\ {\bf 187}(1), 146--162 (2001).
	
	\bibitem{FrJoKaWe14}
	L.~Frerick, E.~Jord\'a, T.~Kalmes, J.~Wengenroth: Strongly continuous semigroups on some Fr\'echet spaces. J.\ Math.\ Anal.\ Appl.\ {\bf 412}(1), 121--124, (2014).
	
	\bibitem{GaJo18} A.~Galbis, E.~Jord\'a: Composition operators on the Schwartz space. Rev.\ Mat.\ Iberoam.\ {\bf 34}(1), 397--412 (2018).

	\bibitem{GoWe16}
	A.~Goli\'nska, S.A.~Wegner: Non-power bounded generators of strongly continuous semigroups, J.\ Math.\ Anal.\ Appl.\ {\bf 436}(1), 429--438, (2016).
	
	\bibitem{GoPr20} M.~Goli\'nski, A.~Przestacki: Dynamical properties of weighted translation operators on the Schwartz space $\mathcal{S}(\mathbb{R})$. Rev.\ Mat.\ Compl.\ \textbf{33}, 103--124 (2020).
	
	\bibitem{GoJoJo16}
	M.C.~G\'omez-Collado, E.~Jord\'a, D.~Jornet: Power bounded composition operators on spaces of meromorphic functions,
	Topology Appl.\ {\bf 203}, 141--146 (2016).
	
	\bibitem{Horvath} J. Horv\'ath, Topological Vector Spaces and Distributions, Addison–Wesley, Reading, 1966.
    
    \bibitem{JoRo20} E.~Jord\'a, A.~Rodr\'iguez: Ergodic properties of composition operators on Banach spaces of analytic functions. J.\ Math.\ Anal.\ Appl.\ {\bf 486}(1), paper No.\ 123891, 14 pp.\ (2020).
    
    \bibitem{JoSaSe20} D.~Jornet, D.~Santacreu, P.~Sevilla-Peris: Mean ergodic composition operators in spaces of homogeneous polynomials. J.\ Math.\ Anal.\ Appl.\ {\bf 483}(1), paper No.\ 123582, 11 pp.\ (2020).
    
    \bibitem{JoSaSe21} D.~Jornet, D.~Santacreu, P.~Sevilla-Peris: Mean ergodic composition operators on spaces of holomorphic functions on a Banach space. J.\ Math.\ Anal.\ Appl.\ {\bf 500}(2), paper No.\ 125139, 16 pp.\ (2021).
    
    \bibitem{Kalmes19} T.~Kalmes: Power bounded weighted composition operators on function spaces defined by local properties. J.\ Math.\ Anal.\ Appl.\ {\bf 471}(1-2), 211--238 (2019).
    
    \bibitem{Kalmes20} T.~Kalmes: Topologizable and power bounded weighted composition operators on spaces of distributions. Ann.\ Polon.\ Math.\ {\bf 125}(2), 139--154 (2020).
    
    \bibitem{KaSa22} T.~Kalmes, D.~Santacreu: Mean ergodic composition operators on spaces of smooth functions and distributions. Proc.\ Amer.\ Math.\ Soc.\ {\bf 150}(2), 2603--2616 (2022).
    
    \bibitem{KaSa23} T.~Kalmes, D.~Santacreu: Mean ergodic weighted shifts on K\"othe echelon spaces. Results Math.\ {\bf 78} (5), paper No.\ 180, 29 pp.\ (2023). 
    
    \bibitem{Lorch39} E.R.~Lorch: Means of iterated transformations in reflexive vector spaces. Bull.\ Amer.\ Math.\ Soc.\ {\bf 45}, 945--947 (1939).
    
    \bibitem{MeVo97} R.~Meise, D.~Vogt: Introduction to Functional Analysis, Clarendon Press, Oxford (1997).
    
    \bibitem{Santacreu24} D.~Santacreu: Ergodic properties of multiplication and weighted composition operators on spaces of holomorphic functions. Math.\ Nachr.\ {\bf 297} (7), 2609--2623 (2024).
    
    \bibitem{Schwartz} L. Schwartz, Théorie des Distributions, Hermann, Paris, 1966.
    
    \bibitem{Zelazko02}
    W.~\.{Z}elazko: When is $L(X)$ topologizable as a topological algebra? Stud.\ Math.\ {\bf 150}(3), 295--303 (2002).
    
    \bibitem{Zelazko07} W.~\.{Z}elazko: Operator algebras on locally convex spaces, in: Topological algebras and applications (Athens 2005), A.\ Mallios et al.\ (eds.), Contemp.\ Math.\ {\bf 427}, Amer.\ Math.\ Soc.\ Providence, RI, 431--442 (2007).
\end{thebibliography}
\end{document}